\documentclass[12pt]{article}
\usepackage{amssymb,amsthm,tikz,tikz-cd, amsmath}
\usepackage[utf8]{inputenc}
\usepackage{mathrsfs}
\usepackage{hyperref}
\theoremstyle{definition}

\usepackage[style=numeric]{biblatex}
\addbibresource{bib.bib}

\newtheorem{Definition}{Definition}
\newtheorem{Proposition}{Proposition}
\newtheorem{Lemma}{Lemma}
\newtheorem{Corollary}{Corollary}

\newtheorem{Theorem}{Theorem}
\newtheorem{Example}{Example}
\newtheorem{Remark}{Remark}

\newcommand{\mat}{\text{Mat}}
\newcommand{\bA}{\mathsf{A}}
\newcommand{\bT}{\mathsf{T}}
\newcommand{\qm}{\textnormal{\textsf{QM}}}

\newcommand{\llambda}{{\boldsymbol \lambda}}
\newcommand{\mmu}{{\boldsymbol \mu}}

\newcommand{\dv}{\mathsf{v}}
\newcommand{\dw}{\mathsf{w}}

\newcommand{\ns}{\text{ns }}
\newcommand{\ahat}{\hat{a}}

\newcommand{\vrs}{\hat{\mathcal{O}}_{\text{vir}}}
\DeclareMathOperator{\Ima}{Im}

\title{On the vertex functions of type $A$ quiver varieties}
\author{Hunter Dinkins}
\date{}

\begin{document}

\maketitle

\begin{abstract}
  The goal of this paper is to better understand the quasimap vertex functions of type $A$ Nakajima quiver varieties. To that end, we construct an explicit embedding of any type $A$ quiver variety into a type $A$ quiver variety with all framings at the rightmost vertex of the quiver. Then we consider quasimap counts, showing that the map induced by this embedding on equivariant $K$-theory preserves vertex functions. 
\end{abstract}

\tableofcontents

\section{Introduction}

The topic of this paper is type $A$ Nakajima quiver varieties, see \cite{GinzburgLectures,NakALE, NakQv}, and their quasimap vertex functions \cite{pcmilect}. Fix a natural number $m$ and consider the type $A_{m}$ quiver $Q$, which has vertex set $Q_{0}=\{1,2,\ldots,m\}$ and edges $i\to i+1$ for $1 \leq i \leq m-1$. For a choice of $\dv,\dw \in \mathbb{Z}_{\geq 0}^{Q_{0}}$, called the dimension vector and framing vector respectively, and for a stability parameter $\theta \in \mathbb{Z}^{Q_{0}}$, Nakajima defined a quasiprojective algebraic variety $\mathcal{M}_{\theta}(\dv,\dw)$. These varieties are of crucial importance in geometric representation theory, see for example \cite{MO, NakQv, Naktp, pcmilect, varagnolo}. From a physical perspective, they arise as the Higgs branch of the moduli space of vacua of 3d $\mathcal{N}=4$ gauge theories, which leads to their relevance in the phenomenon of 3d mirror symmetry studied, for example, in \cite{AOElliptic, dinkms1, msflag, dinksmir4, KS1, KS2, MirSym1, MirSym2, RSbows}. To be sure, Nakajima varieties are well-defined for any choice of quiver $Q$, but our interest in this paper is only in the type $A$ setting. 

\subsection{Embedding of quiver varieties}

Assume now that $\dw_{i} \neq 0$ for some $i<m$. Let $k\in Q_{0}$ be the maximal vertex  such that $\dw_{k+1} \neq 0$. We define $\dv', \dw' \in \mathbb{Z}_{\geq 0}^{Q_{0}}$ by 

\begin{align*}
    \dv'_{i}&=\begin{cases}
        \dv_{i} & 1 \leq i \leq k \\
        \dv_{i}+i-k-1 & k+1 \leq i \leq m
    \end{cases} \\
    \dw'_{i}&=\begin{cases}
        \dw_{k+1}-1 & i=k+1 \\
        \dw_{m}+m-k & i=m \\
         \dw_{i} & \text{otherwise}
    \end{cases}
\end{align*}
In effect, $\dw'$ has one less framing than $\dw$ away from the last vertex, and the price paid for this is to change the dimension vectors and add framings at the last vertex.

\begin{Theorem}[Theorem \ref{embedthm}]\label{introthm1}
    There exists an embedding 
    \begin{equation}\label{introembed}
\Phi: \mathcal{M}_{\theta}(\dv,\dw) \hookrightarrow \mathcal{M}_{\theta}(\dv',\dw')
    \end{equation}
    Furthermore, let $\bT$ and $\bT'$ be the natural tori acting on $\mathcal{M}_{\theta}(\dv,\dw)$ and $\mathcal{M}_{\theta}(\dv',\dw')$, see section \ref{torussection}. Then there is an inclusion $\iota: \bT \hookrightarrow \bT'$ such that (\ref{introembed}) is $\bT$-equivariant. 
\end{Theorem}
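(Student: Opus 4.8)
The plan is to present both Nakajima varieties as algebraic symplectic reductions of quiver data and to build $\Phi$ directly on representatives before descending to the GIT quotients. A point of $\mathcal{M}_{\theta}(\dv,\dw)$ is a tuple $(X_i,Y_i,I_i,J_i)$ — with $X_i\in\mathrm{Hom}(V_i,V_{i+1})$, $Y_i\in\mathrm{Hom}(V_{i+1},V_i)$, and framing maps $I_i,J_i$ — lying in $\mu^{-1}(0)$ and $\theta$-stable, taken modulo $G=\prod_i GL(V_i)$. Since $k+1$ is the rightmost framed vertex before $m$, we may split off a line $\mathbb{C}\subset W_{k+1}$, leaving a framing $\bar W_{k+1}$ of dimension $\dw_{k+1}-1$. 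The heart of the argument is a \emph{transport} procedure that carries this line rightward along the tail $k+1\to k+2\to\cdots\to m$, recording its framing maps $I_{k+1}|_{\mathbb{C}}$ and $J_{k+1}|_{\mathbb{C}}$ inside the edge maps and the enlarged spaces $V'_i\supset V_i$.

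The first elementary step enlarges $V_{k+2}$ by a line $L$ and redefines the edge $k+1\to k+2$ by $X'_{k+1}=(X_{k+1},-J_{k+1})$ and $Y'_{k+1}=(Y_{k+1},I_{k+1})$, so that the $L$-entries of $X'_{k+1}Y'_{k+1}$ reproduce exactly the rank-one term $I_{k+1}J_{k+1}$ that the moment-map equation at $k+1$ demanded. This converts the moment-map equation at $k+2$ into one with a new rank-one ``source'' supported on $L$, which we transport further by the same device across $k+2\to k+3$, and so on. Iterating produces the enlargements $\dim V'_i-\dim V_i=i-k-1$, since each transport step not only carries the previous line forward but also spawns the coupling terms that must be absorbed at the next vertex; at the terminal vertex there is no further edge, so the residual rank-$(m-k)$ data is soaked up by introducing the $m-k$ new framings $W'_m$, whose maps $I'_m,J'_m$ anchor the accumulated lines (one framing mapping isomorphically onto each, and $J'_m$ reading off the corresponding coordinate). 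This is precisely the numerology $\dw'_m=\dw_m+(m-k)$ and $\dv'_i=\dv_i+(i-k-1)$.

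With the maps in hand I would verify the three requirements in turn. (i) $\mu'=0$: the equations at vertices $\le k$ are untouched, and at each vertex of the tail the identity holds by the inductive source-absorption just described, with the final vertex closed off by $I'_mJ'_m$; this is a direct matrix computation. (ii) Descent, injectivity, and properness of the immersion: I would check that $\theta$-stability is preserved — the anchoring framings at $m$ force the added lines to be generated, so stable points map to stable points — and then produce an explicit retraction recovering $(X_i,Y_i,I_i,J_i)$ from any representative in the image, which simultaneously shows that $\Phi$ is well defined on $G$-orbits (a $G$-orbit maps into a $G'$-orbit), injective, and an immersion; identifying the image as the closed locus on which the transported framing maps take the prescribed anchored form shows that $\Phi$ is a closed embedding. (iii) Equivariance: writing $\bT=A\times\mathbb{C}^\times_\hbar$ with $A$ the torus of framing automorphisms and $\hbar$ scaling the cotangent directions $Y_i,J_i$, I would define $\iota$ to send the framing coordinates at vertices $\ne k+1$ to their evident counterparts and the coordinate of the transported line to the string of new $W'_m$-coordinates, each shifted by the power of $\hbar$ equal to the number of edges it has crossed; equivariance of the constructed maps is then immediate once one observes that $Y_i$ carries the $\hbar$-weight.

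The principal obstacle is the middle of the tail: ensuring that the moment-map source terms generated at each transport step are exactly absorbable by the single new dimension added at the following vertex, with nothing left over, so that the recursion closes with the stated dimension vector and terminates cleanly into the $m-k$ framings at $m$. Getting this bookkeeping right — equivalently, showing that the enlargements $i-k-1$ are precisely what the equations force — together with the matching assignment of $\hbar$-powers in $\iota$, is where the real work lies; the stability and injectivity checks, while essential, are comparatively routine once the retraction is written down.
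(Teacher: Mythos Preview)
Your plan is essentially the paper's approach: split off one line from $W_{k+1}$, absorb its contribution to the moment map into the edge maps along the tail $k+1\to\cdots\to m$, and anchor the accumulated data in $m-k$ new framings at vertex $m$; the verifications you list (moment map, stability, $G$-equivariance/descent, injectivity, torus equivariance) correspond exactly to the paper's Propositions~1--5. The one presentational difference is that the paper writes down closed-form block matrices for all $X'_i,Y'_i,I'_m,J'_m$ at once---built from the long compositions $X_{i-1}\cdots X_{k+1}\,\alpha$, $\beta\,Y_{k+1}\cdots Y_{i-1}$, and the scalars $\beta\,(Y_{k+1}\cdots Y_{j}X_{j}\cdots X_{k+1})\,\alpha$---rather than constructing them by iterated one-step transport; this makes the moment-map check a single direct computation rather than an induction. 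Your description of the intermediate ``source'' as rank-one is not quite right: at vertex $k+j$ the defect is supported on the whole added summand $\mathbb{C}^{j-1}$, which is why the next enlargement must grow by one (as you correctly record in $\dim V'_i-\dim V_i=i-k-1$); getting this straight is precisely the bookkeeping you already flag as the crux. Finally, the paper in fact only proves injectivity of $\Phi$ on points (its Proposition~4) and does not carry out the closed-immersion/retraction argument you sketch, so on that point your proposal goes slightly beyond what is actually established there.
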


We construct (\ref{introembed}) in sections \ref{seclocal} and \ref{embeddings}. Recall that $\mathcal{M}_{\theta}(\dv,\dw)$ is the moduli space of $\theta$-semistable representations of the doubled framed quiver of $Q$ with dimension vectors $\dv$ and $\dw$. In section \ref{seclocal}, we consider the data of a quiver representation corresponding to the vertices $\{k+1,k+2,\ldots,m\} \subset Q_{0}$ and we define the map (\ref{introembed}) explicitly in terms of this data. We then check that it respects the stability condition, descends to the quiver varieties, and is $\bT$-equivariant. 

Applying Theorem \ref{introthm1} repeatedly, we can embed $\mathcal{M}_{\theta}(\dv,\dw)$ into a quiver variety with all framings at the last vertex.

\begin{Corollary}\label{introcor}
    Given $\dv, \dw \in \mathbb{Z}^{Q_{0}}_{\geq 0}$, there exists $\dv', \dw' \in \mathbb{Z}^{Q_{0}}_{\geq 0}$ where $\dw'_{i}=0$ for $i \neq m$ and a $\bT$-equivariant embedding
      \begin{equation}\label{introembedflag}
\mathcal{M}_{\theta}(\dv,\dw) \hookrightarrow \mathcal{M}_{\theta}(\dv',\dw')
    \end{equation}
\end{Corollary}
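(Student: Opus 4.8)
The plan is to prove Corollary \ref{introcor} by iterating Theorem \ref{introthm1}, so the main work is to verify that the iteration terminates in a configuration with all framings concentrated at vertex $m$. I would set up an induction on the quantity $N(\dw) = \sum_{i<m} \dw_i$, the total framing dimension away from the last vertex. If $N(\dw)=0$ we are already done (take $\dv'=\dv$, $\dw'=\dw$ and $\Phi=\mathrm{id}$). Otherwise $\dw_i \neq 0$ for some $i<m$, so the hypothesis of Theorem \ref{introthm1} is satisfied and we obtain a $\bT$-equivariant embedding $\Phi: \mathcal{M}_{\theta}(\dv,\dw) \hookrightarrow \mathcal{M}_{\theta}(\dv',\dw')$.

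The key step is to check that the new framing vector $\dw'$ has \emph{strictly smaller} $N$-value, so that the induction descends. Let $k$ be the maximal vertex with $\dw_{k+1}\neq 0$, as in the theorem. Inspecting the definition of $\dw'$, the framings away from the last vertex change only in position $k+1$, where $\dw'_{k+1} = \dw_{k+1}-1$, while the value $\dw'_m = \dw_m + m - k$ is deposited at the last vertex and contributes nothing to $N$. Every other entry $\dw'_i$ for $i<m$ equals $\dw_i$. Hence
\begin{equation*}
N(\dw') = \sum_{i<m}\dw'_i = \Big(\sum_{i<m}\dw_i\Big) - 1 = N(\dw)-1,
\end{equation*}
using that the only change among indices $i<m$ is the drop by one at $k+1$ (note $k+1\le m$, and $k+1=m$ is impossible since $\dw_{k+1}\neq 0$ would then force $\dw_m\neq 0$ with $m<m$, so in fact $k+1<m$ whenever the increment is genuinely moved off the last vertex). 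Thus the induction hypothesis applies to $(\dv',\dw')$, yielding $\dv'', \dw''$ with $\dw''_i=0$ for $i\neq m$ and a $\bT'$-equivariant embedding $\mathcal{M}_{\theta}(\dv',\dw')\hookrightarrow \mathcal{M}_{\theta}(\dv'',\dw'')$.

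Finally I would compose the two embeddings. Theorem \ref{introthm1} supplies a torus inclusion $\iota:\bT\hookrightarrow\bT'$ making $\Phi$ equivariant, and the inductive step supplies an analogous inclusion $\iota':\bT'\hookrightarrow\bT''$. Composing gives $\iota'\circ\iota:\bT\hookrightarrow\bT''$, and the composite $\mathcal{M}_{\theta}(\dv,\dw)\hookrightarrow\mathcal{M}_{\theta}(\dv',\dw')\hookrightarrow\mathcal{M}_{\theta}(\dv'',\dw'')$ is a composition of closed embeddings, hence a closed embedding, and is $\bT$-equivariant with respect to $\iota'\circ\iota$. Setting the final target to be $(\dv'',\dw'')$ completes the induction.

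The step I expect to require the most care is the termination count. One must confirm that each application of Theorem \ref{introthm1} genuinely removes one unit of framing strictly below vertex $m$ and does not inadvertently create new framing at an intermediate vertex; the definition of $\dw'$ shows the only off-diagonal modification is the decrement at $k+1$ and the accumulation at $m$, so $N$ is strictly decreasing and the process halts after exactly $N(\dw)$ steps. The equivariance bookkeeping across the composed tori is routine once the inclusions are tracked, so the real content is simply this monotonicity of $N$.
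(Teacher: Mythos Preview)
Your proposal is correct and is exactly the paper's approach: iterate Theorem \ref{introthm1} until all framings sit at vertex $m$ (the paper says no more than ``applying Theorem \ref{introthm1} repeatedly'', and your induction on $N(\dw)=\sum_{i<m}\dw_i$ simply makes the termination explicit). The parenthetical justifying $k+1<m$ is a bit muddled, but the conclusion is right --- in the paper's intended setup $k+1$ is the largest index strictly below $m$ with nonzero framing --- and this does not affect the argument.
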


If $\dw=(0,0,\ldots,N)$, it is known that the corresponding Nakajima variety $\mathcal{M}_{\theta}(\dv,\dw)$ is nonempty if and only if $\dv_{1} \leq \dv_{2} \leq \ldots \leq \dv_{m} \leq N$, \cite{NakALE} section 7. Furthermore, if $\theta=\pm (1,1,\ldots,1)$, then $\mathcal{M}_{\theta}(\dv,\dw)$ is the cotangent bundle of a partial flag variety. It follows immediately from Corollary \ref{introcor} that any type $A$ quiver variety arising from the stability conditions $\theta^{\pm}=\pm(1,1,\ldots,1)$ can be embedded into the cotangent bundle of a partial flag variety.

\subsection{Vertex functions}

Our motivation for and construction of Theorem \ref{introthm1} was inspired by the work \cite{KZins}. To explain the connection, we must recall some aspects of the enumerative geometry of Nakajima quiver varieties. As shown in the pioneering work of Maulik-Okounkov \cite{MO} and later in \cite{pcmilect}, enumerative invariants of Nakajima varieties are deeply related to the representation theory of certain quantum groups. In the $K$-theoretic setting, the relevant enumerative theory of curves is the theory of quasimaps to a GIT quotient developed in \cite{qm}. In \cite{pcmilect}, Okounkov identifies $q$-difference equations constraining certain $K$-theoretic quasimap counts with the quantum Knizhnik-Zamolodchikov equations arising from the representation theory of quantum affine algebras \cite{KZlectures}. 

One of the key curve counts studied in \cite{pcmilect} are known as vertex functions. For $d \in \mathbb{Z}^{Q_{0}}$, let $\qm^{d}_{\ns \infty}$ denote the moduli space of degree $d$ stable quasimaps from $\mathbb{P}^{1}$ to a Nakajima variety $X$ which are nonsingular at $\infty \in \mathbb{P}^{1}$. The usual action of $\mathbb{C}^{\times}$ on $\mathbb{P}^{1}$ gives rise to the action of a torus, denoted $\mathbb{C}^{\times}_{q}$, on quasimaps. Let $K_{\bT\times\mathbb{C}^{\times}_{q}}(X)_{loc}= K_{\bT\times\mathbb{C}^{\times}_{q}}(X) \otimes_{R} \text{Frac}(R)$ where $R=K_{\bT\times\mathbb{C}^{\times}_{q}}(pt)$. Let $\text{ev}_{\infty}:\qm^{d}_{\ns \infty} \to X$ be the map given by evaluating a quasimap at $\infty$. Let $\vrs^{d}$ be the symmetrized virtual structure sheaf on $\qm^{d}_{\ns \infty}$\footnote{The \textit{symmetrized} virtual structure sheaf depends on a choice of polarization of the tangent space of X. For simplicity, we supress this aspect in the introduction.}. Then the vertex function of $X$ is defined to be

$$
V(z)=\sum_{d} \text{ev}_{\infty,*}\left( \vrs^{d} \right) z^{d}\in K_{\bT \times \mathbb{C}^{\times}_{q}}(X)_{loc}[[z]]
$$
where $d$ runs over all degrees such that $\qm^{d}_{\ns \infty}$ is nonempty and $z^{d}:=\prod_{i=1}^{m} z_{i}^{d_{i}}$. The variables $z_{i}$ for $1\leq i \leq m$ can be though of as formal parameters and are conventionally referred to as ``K\"ahler paramter" (or Fayet–Iliopoulos parameters in the physics literature). In section 7 of\cite{pcmilect}, Okounkov shows that $V(z)$ satisfies a system a scalar $q$-difference equations with regular singualarities, from which it follows that $V(z)$ is in fact the Taylor series expansion of a meromorphic function of $z$. The notation $[[z]]$ above refers to a completion of the semigroup algebra of the cone of effective curves in $X$. We review vertex functions in section \ref{secqm}, but see also \cite{dinkinsthesis, dinksmir3, Pushk1, pcmilect} for further explanations.

In \cite{OkBethe}, Aganagic and Okounkov show how the Bethe equations can be obtained from quiver varieties. This was also explored in \cite{Pushk1}, where the Bethe equations for the XXZ spin chain were identified with the criticality conditions for the saddle point approximation of a contour integral computing the vertex functions for the cotangent bundle of the Grassmannian. The work of Koroteev-Zeitlin in \cite{KZins} studied 3d mirror symmetry from the perspective of Bethe equations. There the authors show that the Bethe equations associated to $\mathcal{M}_{\theta}(\dv',\dw')$ recover those associated to $\mathcal{M}_{\theta}(\dv,\dw)$ under a certain specialization of the parameters. Because the Bethe equations can be recovered from the vertex functions, which depend on the geometry of the quiver variety, it seemed desireable to us to obtain a direct geometric relationship between $\mathcal{M}_{\theta}(\dv,\dw)$ and $\mathcal{M}_{\theta}(\dv',\dw')$, hence Theorem \ref{introthm1}. Furthermore, one hopes that Theorem \ref{introthm1} would relate vertex functions, thus giving a broader explanation for the coincidences observed in \cite{KZins}.

In sections \ref{secqm} and \ref{proof}, we pursue this agenda. Let $\theta=\theta^{-}$ and consider the pullback on equivariant $K$-theory
$$
\Phi^{*}: K_{\bT'}(\mathcal{M}_{\theta}(\dv',\dw')) \to K_{\bT}(\mathcal{M}_{\theta}(\dv,\dw))
$$
induced by $\Phi$. 


Let $V(z) \in K_{\bT \times \mathbb{C}^{\times}_{q}}(\mathcal{M}_{\theta}(\dv,\dw))_{loc}[[z]]$ and $V'(z) \in K_{\bT'\times  \mathbb{C}^{\times}_{q}}(\mathcal{M}_{\theta}(\dv',\dw'))_{loc}[[z]]$ be the vertex functions of $\mathcal{M}_{\theta}(\dv,\dw)$ and $\mathcal{M}_{\theta}(\dv',\dw')$, respectively. 


Then our main theorem is the following.

\begin{Theorem}[Theorem \ref{mainthm}]\label{mainthmintro}
    The map $\Phi^{*}$ preserves vertex functions. More precisely,
 $$
\Phi^{*}(V'(z))=V(\tilde{z})
 $$
    where $\tilde{z}$ stands for a shift of the parameters $z_{1},\ldots, z_{m}$ by certain powers of $q$.
\end{Theorem}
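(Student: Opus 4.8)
The plan is to compare the two vertex functions by working on the abelianization / fixed-point locus and showing that the quasimap counts match degree-by-degree after a suitable change of Kähler variables. The key conceptual point is that $\Phi$ is an embedding with a very specific, explicit form: on the "extra" data corresponding to vertices $k+1,\ldots,m$ it inserts prescribed linear maps (the source of the new framings and the increased dimension vectors), while on the remaining data it is essentially the identity. Because quasimap counts are computed by integrating a symmetrized virtual structure sheaf, and because the relevant torus $\bT$ includes the Kähler/equivariant variables, I expect the pushforward $\mathrm{ev}_{\infty,*}\vrs^d$ to be controlled entirely by the combinatorial data of the quiver and the linearization, so that the effect of $\Phi$ on vertex functions is a clean relabeling.

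Let me sketch the steps in order. \emph{First}, I would analyze the induced map on degrees: a quasimap to $\mathcal{M}_\theta(\dv,\dw)$ of degree $d$ should be sent, via $\Phi$, to a quasimap to $\mathcal{M}_\theta(\dv',\dw')$ of a degree $d'$ computed from the formula defining $\dv'$; since $\dv'_i = \dv_i$ for $i\le k$ and $\dv'_i = \dv_i + i - k - 1$ for $i>k$, the degree shift is by a fixed vector independent of $d$, and this fixed shift is precisely what produces the substitution $z \mapsto \tilde z$ (the powers of $q$ come from the symmetrization of the virtual structure sheaf, which weights each degree by a half-integer power of $q$ depending on $d$). \emph{Second}, I would show $\Phi$ is compatible with evaluation at $\infty$, i.e.\ that the square relating $\mathrm{ev}_\infty$ on the two quasimap spaces and $\Phi$ on the targets commutes, and that $\Phi$ identifies the relevant quasimap moduli spaces (or at least their fixed loci and virtual classes) up to the degree shift. \emph{Third}, using the $\bT$-equivariance from Theorem~\ref{introthm1} and the inclusion $\iota:\bT\hookrightarrow\bT'$, I would pull back the virtual structure sheaves and check that $\iota^*$ intertwines the two symmetrized classes, so that $\Phi^*\,\mathrm{ev}_{\infty,*}\vrs'^{\,d'} = \mathrm{ev}_{\infty,*}\vrs^{\,d}$ after the variable shift.

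The cleanest route to the \emph{second} and \emph{third} steps is likely to avoid working directly with the full quasimap moduli spaces and instead use localization: compute both vertex functions as sums over $\bT$-fixed points (respectively $\bT'$-fixed points) via the contour-integral / Bethe-ansatz presentation referenced in the discussion of \cite{Pushk1, OkBethe}. Under $\iota$, each $\bT$-fixed point of $\mathcal{M}_\theta(\dv,\dw)$ maps to a $\bT'$-fixed point of $\mathcal{M}_\theta(\dv',\dw')$, and the extra fixed-point data introduced by the added framings and dimensions contributes only overall factors that are independent of the summation (or that cancel after the Kähler shift). Then the equality $\Phi^*(V'(z)) = V(\tilde z)$ reduces to matching integrands term-by-term, which is a direct—if bookkeeping-heavy—comparison of the $K$-theoretic weights appearing in each vertex.

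\textbf{The main obstacle} I anticipate is step two: establishing the precise relationship between the quasimap moduli spaces, not merely the target varieties. An embedding $\Phi$ of targets does not automatically induce a nice map on quasimap spaces, because a quasimap is a map to a \emph{quotient stack} and may acquire base points or degenerate bundles that $\Phi$ need not respect; one must check that $\Phi$ sends stable quasimaps to stable quasimaps and that it is compatible with the perfect obstruction theories defining $\vrs^d$ and $\vrs'^{\,d'}$. I expect this to require a careful local analysis, paralleling the local construction of $\Phi$ in section~\ref{seclocal}, to verify that the symmetrized virtual structure sheaves agree after the shift—including pinning down exactly which powers of $q$ arise in $\tilde z$ from the change in polarization induced by $\Phi$. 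This is where the bulk of the genuine work, as opposed to formal manipulation, will lie.
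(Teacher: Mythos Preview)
Your overall instinct to work via localization at fixed points and compare term-by-term is exactly what the paper does, but several of the concrete mechanisms you propose are off, and you miss the key lemma that makes the whole comparison work.

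\textbf{The degree mechanism is not what you describe.} You write that a quasimap of degree $d$ is sent to one of degree $d'$ with $d'-d$ a fixed vector determined by the formula $\dv'_i = \dv_i + i-k-1$, and that this shift produces $\tilde z$. This conflates dimension vectors with degrees. In fact, after applying $\iota^*$, the surviving terms in $V'|_{\mmu}$ have the \emph{same} total degree $d$ as the corresponding terms in $V|_{\llambda}$: the extra boxes in $\mmu\setminus\llambda$ carry degree zero (see below), so $z^{d'} = z^d$. The shift $z\mapsto\tilde z$ arises not from any degree offset but from residual powers of $q$ left over after the extra $q$-Pochhammer factors in $C^{\mmu}$ are simplified (the paper computes these explicitly as $q^{-d_\square}$ for $\gamma(\square)$ between $k+1$ and $m-1$ and $q^{(n-1)d_\square}$ for $\gamma(\square)=m$).

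\textbf{The essential missing step is a vanishing lemma.} The vertex function $V'|_{\mmu}$ is a sum over admissible degree data $\{d_\square\}_{\square\in\mmu}$, which is a strictly larger index set than the degree data $\{d_\square\}_{\square\in\llambda}$ indexing $V|_{\llambda}$. The crux of the proof is that after applying $\iota^*$, any term with $d_\square>0$ for some $\square\in\mmu\setminus\llambda$ \emph{vanishes}: the specialization of equivariant parameters forces certain Pochhammer numerators to become $(1)_{d_\square}=0$. This also enforces the extra inequalities needed for the remaining $\{d_\square\}_{\square\in\llambda}$ to be admissible for $\llambda$. You describe the extra data as contributing ``overall factors independent of the summation'', but that is not the mechanism: it is genuine vanishing that prunes the sum down to the correct index set. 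Without this observation the term-by-term comparison has no chance of succeeding, because the two sums are over different sets.

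\textbf{Your ``main obstacle'' is a red herring.} The paper never attempts to relate the quasimap moduli spaces, their stability conditions, or their obstruction theories directly. It works entirely with the explicit closed-form localization expression for the vertex (a sum of products of $q$-Pochhammer symbols indexed by boxes), applies $\iota^*$ to the formula for $V'|_{\mmu}$, and checks by hand that what remains equals $V|_{\llambda}$ up to the stated $q$-shifts. So the hard part is not a geometric comparison of moduli but a careful combinatorial bookkeeping of which Pochhammer factors appear, which cancel, which vanish, and which produce the stray powers of $q$.
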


Skipping ahead to Theorem \ref{vertex}, the reader can see that in the $K$-theoretic fixed point basis, vertex functions are certain $q$-hypergeometric series. In fact, vertex functions generalize many of the most important $q$-hypergeometric series. For the quiver variety $T^*\mathbb{P}^{n}$, one recovers the so-called $_{n+1} \phi_{n}$ basic hypergeometric series. Concretely, Theorem \ref{mainthmintro} gives a relationship between two different $q$-series under a parameter specialization. From this perspective, Theorem \ref{mainthmintro} demonstrates how then geometry of quiver varieties can be exploited to give a deeper understanding of certain special functions. Special cases of vertex functions were studied in \cite{dinksmir3,dinksmir, dinksmir4,dinksmir2, tRSKor, KorZeit,smirnovrationality}, which considered summation formulas, symmetries under swaps of the parameters (i.e. 3d mirror symmetry), and connections with Macdonald theory. 

Theorem \ref{mainthmintro} is proven in section \ref{proof} by a careful analysis of the localization formula for the vertex. Although the proof involves complicated combinatorial expressions, Theorem \ref{mainthmintro} is actually a ``term-by-term" result. By this we mean the following. Each term in the localization formula for $V'(z)$ corresponds to a $\bT' \times \mathbb{C}^{\times}_{q}$ fixed quasimap to $\mathcal{M}_{\theta}(\dv',\dw')$, and similarly for $V(z)$. Applying $\Phi^{*}$ to $V'(z)$, some of these terms become zero, and the remaining terms can be matched in a one-to-one fashion with the terms of $V(z)$. 

For this reason, one is tempted to say that $V'(z)$ is a more complicated series than $V(z)$. However, in the special case where $\mathcal{M}_{\theta}(\dv', \dw')$ is the cotangent bundle to a complete flag variety, there is at least one aspect of $V'(z)$ which is better understood: the $q$-difference equations in the K\"ahler parameters. In a future work, we will exploit this fact to prove 3d mirror symmetry of the vertex functions for those cotangent bundles of partial flag varieties whose 3d mirror duals are still Nakajima quiver varieties.

Shortly after the first version of this paper was posted, the work \cite{RB} of Rim\'anyi and Botta appeared. In the more general setting of bow varieties, they independently arrived at the same construction as Theorem \ref{introthm1}, which they call the ``D5 resolution". They prove an analog of Theorem \ref{mainthmintro} for elliptic stable envelopes and use it, along with other techniques, to prove 3d mirror symmetry for elliptic stable envelopes.

\subsection{Acknowledgements}

We would like to thank Andrey Smirnov, Peter Koroteev, and Anton Zeitlin for helpful discussions which contributed to the ideas of this paper. This project also benefited from conversations with Joshua Wen. We thank Rich\'ard Rim\'anyi and Tommaso Botta for pointing out a mistake in the first version of this paper. This research was partially supported through the NSF RTG grant Algebraic Geometry and Representation Theory at Northeastern University DMS–1645877.

\section{Review of quiver varieties}

\subsection{Definition}
We review the construction of Nakajima quiver varieties from \cite{NakALE, NakQv}, see also \cite{GinzburgLectures}. Since our interest is only in type $A$ quiver varieties, we will specialize to that case. Consider a quiver $Q$ with vertices $Q_{0}=\{1,2,\ldots,n\}$ and edges from $i$ to $i+1$ for $1 \leq i \leq n-1$. Choose $\dv,\dw \in \mathbb{Z}_{\geq 0}^{Q_{0}}$. Let $\theta\in \mathbb{Z}^{Q_{0}}$ be the stability parameter. 

For each $i \in Q_{0}$, let $V_{i}$ and $W_{i}$ be complex vector spaces of dimension $\dv_{i}$ and $\dw_{i}$, respectively. Let
$$
\text{Rep}_{Q}(\dv,\dw)= \bigoplus_{i=1}^{n-1} \text{Hom}(V_{i},V_{i+1}) \oplus \bigoplus_{i=1}^{n} \text{Hom}(W_{i},V_{i})
$$
and
$$
G_{\dv}=\prod_{i \in Q_{0}} GL(V_{i})
$$

Since $G_{\dv}$ acts on $\text{Rep}_{Q}(\dv,\dw)$ by change of basis, there is an induced Hamiltonian action of $G_{\dv}$ on $T^*\text{Rep}_{Q}(\dv,\dw)$, with associated moment map
$$
\mu_{\dv,\dw}: T^{*}\text{Rep}_{Q}(\dv,\dw) \to \text{Lie}(G_{\dv})^{*}
$$

The associated Nakajima quiver variety is defined as the algebraic symplectic reduction
\begin{equation}\label{qvdef}
\mathcal{M}_{\theta}(\dv,\dw):= T^{*}\text{Rep}_{Q}(\dv,\dw)/\!\!/\!\!/\!\!/_{\theta} G_{\dv}:= \mu_{\dv,\dw}^{-1}(0)/\!\!/_{\theta} G_{\dv}
\end{equation}
Here the notation $/\!\!/_{\theta}$ stands for the GIT quotient with stability parameter $\theta$, or more precisely, the character of $G_{\dw}$ given by
$$
(g_{i})_{i \in Q_{0}} \mapsto \prod_{i \in Q_{0}} \det(g_{i})^{-\theta_{i}}
$$

By the trace pairing, we have $\text{Hom}(A,B)^{*} \cong \text{Hom}(B,A)$, so that we can denote a general element of 
$$
T^{*}\text{Rep}_{Q}(\dv,\dw)\cong \text{Rep}_{Q}(\dv,\dw)\oplus \text{Rep}_{Q}(\dv,\dw)^{*}
$$
by a quadruple $(\{X_{i}\}_{1 \leq i \leq n-1}, \{Y_{i}\}_{1 \leq i \leq n-1}, \{I_{i}\}_{i \in Q_{0}}, \{J_{i}\}_{i \in Q_{0}})$, where $X_{i} \in \text{Hom}(V_{i},V_{i+1})$, $Y_{i} \in \text{Hom}(V_{i+1},V_{i})$, $I_{i} \in \text{Hom}(W_{i},V_{i})$ and $J_{i} \in \text{Hom}(V_{i},W_{i})$, see Figure \ref{qv}. We abbreviate this by $(X,Y,I,J)$.

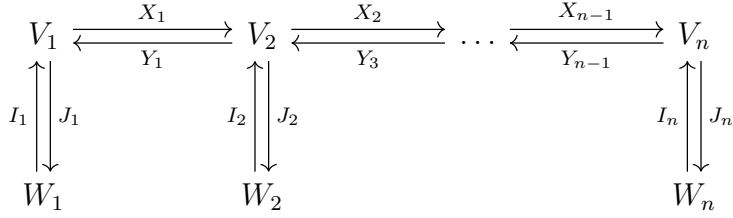
\begin{figure}[ht]
\centering
   \begin{tikzcd}[column sep=huge, row sep=huge, ampersand replacement=\&]
V_1 \arrow[d, shift left=0.5ex,"J_1"] \arrow[r,shift left=.5ex,"X_1"] \& V_2  \arrow[l,"Y_1", shift left=.5ex] \arrow[r, shift left=.5ex,"X_2"]  \arrow[d, shift left=0.5ex,"J_2"] \& \ldots \arrow[l,shift left=0.5ex, "Y_3"] \arrow[r, shift left=0.5ex, "X_{n-1}"] \& V_{n} \arrow[d, shift left=0.5ex, "J_{n}"] \arrow[l,shift left=0.5ex,"Y_{n-1}"] \\
W_{1} \arrow[u,shift left=0.5ex,"I_1"] \&  W_{2} \arrow[u,shift left=0.5ex,"I_{2}"] \&  \& W_{n} \arrow[u, shift left=0.5ex, "I_n"]
\end{tikzcd}
\label{qv}
\caption{A graphical depiction of the data $(X,Y,I,J)$.}
\end{figure}

Under the identification $\text{Lie}(G_{\dv})^{*}\cong \text{Lie}(G_{\dv})$ given by the trace pairing, the moment map is
$$
\mu_{\dv,\dw}(X,Y,I,J)=\left(X_{i-1}Y_{i-1}-Y_{i}X_{i} + I_{i} J_{i}\right)_{i \in Q_{0}} \in \bigoplus_{i \in Q_{0}} \text{End}(V_{i}) = \text{Lie}(G_{\dv})
$$

\subsection{Criterion for semistability}

We will also need the well-known criterion for (semi)stability. To state it, we define two conditions on $(X,Y,I,J)$:
\begin{enumerate}
    \item  Let $\{S_{i}\}_{i \in I}$ be a collection of subspaces $S_{i} \subset V_{i}$ preserved by $X$ and $Y$ such that $S_{i} \subset \ker J_{i}$ for all $i$. Then
    $$
 \sum_{i \in Q_{0}} \theta_{i} \dim_{\mathbb{C}}S_{i} \leq 0
    $$
    \item Let $\{T_{i}\}$ be a collection of subspaces $T_{i} \subset V_{i}$ preserved by $X$ and $Y$ such that $T_{i} \supset \Ima I_{i}$ for all $i$. Then
    $$
\sum_{i \in Q_{0}}\theta_{i} \dim_{\mathbb{C}}T_{i} \leq \sum_{i \in Q_{0}} \theta_{i} \dim_{\mathbb{C}} V_{i}
    $$
\end{enumerate}

\begin{Proposition}[\cite{GinzburgLectures} Proposition 5.1.5]\label{ss}
A quadruple $(X,Y,I,J) \in \mu_{\dv,\dw}^{-1}(0)$ is $\theta$-semistable if and only if conditions 1 and 2 above hold.

\subsection{Torus action}\label{torussection}

There a a natural action of the torus $\bA:=(\mathbb{C}^{\times})^{|\dw|}$ on $T^{*}\text{Rep}_{Q}(\dv,\dw)$ coming from the action of $\bA$ on each $W_{i}$. It descends to an action on $\mathcal{M}_{\theta}(\dv,\dw)$. In addition, there is an action of $\mathbb{C}^{\times}$ on $T^{*}\text{Rep}_{Q}(\dv,\dw)$ given by dilation of the cotangent fibers. We denote this latter torus by $\mathbb{C}^{\times}_{\hbar}$. Let $\bT=\bA\times\mathbb{C}^{\times}_{\hbar}$
   
\end{Proposition}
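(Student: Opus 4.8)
The plan is to deduce the criterion from the Hilbert--Mumford numerical criterion for GIT quotients linearized by a character, following King's analysis of quiver representations. Since $\mathcal{M}_{\theta}(\dv,\dw)$ is a GIT quotient of the affine space $T^{*}\text{Rep}_{Q}(\dv,\dw)$ by $G_{\dv}$ with respect to the character $\chi_{\theta}\colon (g_{i})\mapsto\prod_{i}\det(g_{i})^{-\theta_{i}}$, a point $(X,Y,I,J)$ is $\theta$-semistable precisely when $\langle\chi_{\theta},\lambda\rangle\ge 0$ for every one-parameter subgroup $\lambda\colon\mathbb{C}^{\times}\to G_{\dv}$ for which $\lim_{t\to 0}\lambda(t)\cdot(X,Y,I,J)$ exists. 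The condition $\mu_{\dv,\dw}=0$ plays no role in the criterion itself; it merely specifies which points we test, since $\mu_{\dv,\dw}^{-1}(0)$ is a closed $G_{\dv}$-invariant subvariety. So the whole problem reduces to understanding these limits and computing the weight $\langle\chi_{\theta},\lambda\rangle$.

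First I would encode a one-parameter subgroup of $G_{\dv}$ as a $\mathbb{Z}$-grading $V_{i}=\bigoplus_{n} V_{i}(n)$, on which $\lambda(t)$ acts by $t^{n}$ on $V_{i}(n)$, and record the associated decreasing filtration $V_{i}^{\ge p}=\bigoplus_{n\ge p}V_{i}(n)$. Examining how $\lambda$ scales each block of $X_{i},Y_{i},I_{i},J_{i}$ --- keeping in mind that $\lambda$ acts trivially on each $W_{i}$ --- I would translate existence of the limit as $t\to 0$ into four conditions: $X$ and $Y$ preserve the filtration $\{V_{i}^{\ge p}\}$, while $\Ima I_{i}\subset V_{i}^{\ge 0}$ and $V_{i}^{\ge 1}\subset\ker J_{i}$. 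A direct computation then gives $\langle\chi_{\theta},\lambda\rangle=-\sum_{i}\theta_{i}\sum_{n} n\dim V_{i}(n)$, so semistability amounts to $\sum_{i}\theta_{i}\sum_{n} n\dim V_{i}(n)\le 0$ for all admissible gradings.

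For the ``only if'' direction I would feed the criterion two families of especially simple gradings. Given a collection $\{S_{i}\}$ as in condition 1, the two-step grading with $V_{i}(1)=S_{i}$ and $V_{i}(0)$ a complement is admissible exactly because $S$ is $X,Y$-invariant and lies in $\ker J$; its weight recovers $\sum_{i}\theta_{i}\dim S_{i}\le 0$. Given $\{T_{i}\}$ as in condition 2, the grading with $V_{i}(0)=T_{i}$ and $V_{i}(-1)$ a complement is admissible because $T$ is invariant and contains $\Ima I$, and its weight yields $\sum_{i}\theta_{i}\dim T_{i}\le\sum_{i}\theta_{i}\dim V_{i}$. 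Thus semistability forces conditions 1 and 2.

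The converse is where the real content lies, and I expect it to be the main obstacle. For a general admissible grading the two conditions are not directly applicable, so the key move is a summation-by-parts identity
$$
\sum_{n} n\dim V_{i}(n)=\sum_{p\in\mathbb{Z}}\bigl(\dim V_{i}^{\ge p}-[p\le 0]\dim V_{i}\bigr),
$$
which reorganizes the weight as a sum over the individual layers of the filtration. For each $p\ge 1$ the subspaces $\{V_{i}^{\ge p}\}$ are $X,Y$-invariant and, lying inside $V_{i}^{\ge 1}\subset\ker J_{i}$, satisfy the hypotheses of condition 1, giving $\sum_{i}\theta_{i}\dim V_{i}^{\ge p}\le 0$; for each $p\le 0$ they are invariant and contain $\Ima I_{i}\subset V_{i}^{\ge 0}$, so condition 2 gives $\sum_{i}\theta_{i}(\dim V_{i}^{\ge p}-\dim V_{i})\le 0$. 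Summing these term-by-term inequalities yields $\sum_{i}\theta_{i}\sum_{n} n\dim V_{i}(n)\le 0$, i.e.\ $\langle\chi_{\theta},\lambda\rangle\ge 0$ for every admissible $\lambda$, which is semistability. The only points demanding care are fixing the sign convention in the linearization so that the two conditions emerge with the stated inequalities, and checking the threshold ($\ge 0$ versus $\ge 1$) in the filtration conditions, which is precisely what distinguishes the role of $I$ from that of $J$.
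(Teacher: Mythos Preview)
Your argument is the standard King--Hilbert--Mumford proof and is essentially correct. The paper, however, does not prove this proposition at all: it is stated with a citation to \cite{GinzburgLectures} Proposition 5.1.5 and no proof is given in the text. So there is no ``paper's own proof'' to compare against; you have supplied a proof where the paper simply quotes the literature.
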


\section{The embedding: local case}\label{seclocal}

Our goal is to define an embedding of one quiver variety inside another. Our embedding will be constructed locally on the quiver. For the basic situation, let $\dv \in \mathbb{Z}_{\geq 0}^{n}$ be arbitrary, but assume that $\dw_{i}=0$ if $2 \leq i \leq n-1$.

\subsection{Notations}

Points in $T^{*}\text{Rep}_{Q}(\dv,\dw)$ are given by diagrams of the form:

\begin{center}
   \begin{tikzcd}[column sep=huge, row sep=large, ampersand replacement=\&]
V_1 \arrow[d, shift left=0.5ex,"J_1"] \arrow[r,shift left=.5ex,"X_1"] \& V_2  \arrow[l,"Y_1", shift left=.5ex] \arrow[r, shift left=.5ex,"X_2"] \& \ldots \arrow[l,shift left=0.5ex, "Y_3"] \arrow[r, shift left=0.5ex, "X_{n-1}"] \& V_{n} \arrow[d, shift left=0.5ex, "J_{n}"] \arrow[l,shift left=0.5ex,"Y_{n-1}"] \\
W_{1} \arrow[u,shift left=0.5ex,"I_{1}"] \& \& \& W_{n} \arrow[u, shift left=0.5ex, "I_n"]
\end{tikzcd}
\end{center}

Choose a basis for each vector space $V_i$ and $W_i$ and write all linear maps in the diagram above as matrices. In particular, we write 
\[ I_1=
\begin{pmatrix}
A_1 & A_2 & \ldots & A_{\dw_1}
\end{pmatrix}
\]
where each $A_{k}$ is a column vector in $\mathbb{C}^{\dv_{1}}$. Similarly, we write
\[ J_1=
\begin{pmatrix}
B_1 \\
B_2 \\
\vdots \\
B_{\dw_1}
\end{pmatrix}
\]
where each $B_{k}$ is a row vector in $\mathbb{C}^{\dv_{1}}$.

Let $\dv',\dw' \in \mathbb{Z}^{n}_{\geq 0}$ be defined by
\begin{align*}
     \dv_{i}' &= \dv_{i}+i-1 \\
     \dw_{i} &= \begin{cases}
         \dw_{1}-1 & i=1 \\
         0 & 2 \leq i \leq n-1 \\
         \dw_{n}+n & i=n
     \end{cases}
\end{align*}
Let $V_{i}'$ and $W_{i}'$ be complex vector spaces of dimension $\dv_{i}'$ and $\dw_{i}'$, respectively. We identify $V_{i}'=V_{i} \oplus \mathbb{C}^{i-1}$ for $1 \leq i \leq n$ and $W_{n}'=W_{n}\oplus \mathbb{C}^{n}$. 

\subsection{Construction of the map}\label{localconstruction}

To $(X,Y,I,J)\in T^{*} \text{Rep}_{Q}(\dv,\dw)$ we associate an element $(X',Y',I',J')$ of $T^{*}\text{Rep}_{Q}(\dv',\dw')$ as follows:

\begin{itemize}
    \item The framing maps at the first node are 
    \begin{align*}
 I'_{1}=
\begin{pmatrix}
A_2 & A_3 & \ldots & A_{\dw_1}
\end{pmatrix}, \quad 
J'_{1} = \begin{pmatrix}
B_2 \\ B_3 \\ \vdots \\ B_{\dw_1}
\end{pmatrix}
\end{align*}
\item $X'_{k}: V_k \oplus \mathbb{C}^{k-1} \to V_{k+1} \oplus \mathbb{C}^{k}$ is given by
\[ X'_{k}=
\begin{tikzpicture}[baseline={([yshift=-.5ex]current bounding box.center)}]
\matrix [matrix of math nodes,left delimiter=(,right delimiter=),row sep=0.1cm,column sep=0.1cm] (m) {
      -X_k & 0 \\
      B_1 Y_1 \ldots Y_{k-1} & C_k \\
      0 & -\mathbb{I}_{k-1} \\};
\end{tikzpicture}
\]
where 
\begin{align*}
 C_k=\begin{pmatrix} c^{1} & c^{2} & \ldots & c^{k-1} \end{pmatrix} &\in \mat_{1,k-1}(\mathbb{C}),  \quad c^{j} = B_1 \left( Y_1 \ldots Y_{j-1} X_{j-1} \ldots X_{1} \right)A_1
 \end{align*}
 and $\mathbb{I}_{k-1}$ is the $(k-1)\times(k-1)$ identity matrix.

Notice that
\begin{align*} 
 X_{k} &\in \mat_{\dv_{k+1},\dv_k}(\mathbb{C}) \\
 J_1 Y_1 \ldots Y_{k-1} &\in \mat_{1,\dv_{k}}(\mathbb{C})
\end{align*}
so that $X'_{k}$ is a $(\dv_{k+1}+1+(k-1))\times(\dv_{k}+(k-1))=\dv'_{k}\times\dv'_{k+1}$ matrix.
      
\item $Y'_{k}: V_{k+1} \oplus \mathbb{C}^{k} \to V_{k} \oplus \mathbb{C}^{k-1}$ is given by
\[ Y'_{k}=
\begin{tikzpicture}[baseline={([yshift=-.5ex]current bounding box.center)}]
\matrix [matrix of math nodes,left delimiter=(,right delimiter=),row sep=0.1cm,column sep=0.1cm] (m) {
      Y_{k} & 0 & X_{k-1} \ldots X_{1} A_{1} \\ 0 & \mathbb{I}_{k-1} & 0 \\};
\end{tikzpicture}
\]
Notice that
\begin{align*}
    Y_{k} &\in \mat_{\dv_k,\dv_{k+1}}(\mathbb{C}) \\
    X_{k-1} \ldots X_1 A_1 & \in \mat_{\dv_{k},1}(\mathbb{C})
\end{align*}
so that $Y'_{k}$ is a $((\dv_k)+(k-1))\times((\dv_{k+1})+(k-1)+1)=\dv'_{k}\times\dv'_{k+1}$ matrix.

\item The new framing maps at the final node are
\[ J'_{n}=-
\begin{tikzpicture}[baseline={([yshift=-.5ex]current bounding box.center)}]
\matrix [matrix of math nodes,left delimiter=(,right delimiter=),row sep=0.1cm,column sep=0.1cm] (m) {
      J_n & 0 \\
      B_1 Y_1 \ldots Y_{n-1} & C_n \\
      0 & -\mathbb{I}_{n-1} \\};
\end{tikzpicture}
\]
and 
\[ I'_{n}=
\begin{tikzpicture}[baseline={([yshift=-.5ex]current bounding box.center)}]
\matrix [matrix of math nodes,left delimiter=(,right delimiter=),row sep=0.1cm,column sep=0.1cm] (m) {
      I_{n} & 0 & X_{n-1} \ldots X_{1} A_{1} \\ 0 & \mathbb{I}_{n-1} & 0 \\};
\end{tikzpicture}
\]
\end{itemize}

\begin{figure}
    \centering
\begin{tikzcd}[column sep=large, row sep=large,ampersand replacement=\&]
V_1 \arrow[d, shift left=0.5ex,"J_{1}'"] \arrow[r,shift left=.5ex,"X'_1"] \& V_2 \oplus \mathbb{C} \arrow[l,"Y'_1", shift left=.5ex] \arrow[r, shift left=.5ex,"X'_2"] \& \ldots \arrow[l,shift left=0.5ex, "Y'_3"] \arrow[r, shift left=0.5ex, "X'_{n-1}"] \& V_{n}\oplus \mathbb{C}^{n-1} \arrow[d, shift left=0.5ex, "J'_{n}"] \arrow[l,shift left=0.5ex,"Y'_{n-1}"] \\
\mathbb{C}^{\dw_1-1} \arrow[u,shift left=0.5ex,"I_{1}'"] \&  \& \& W_{n}\oplus \mathbb{C}^{n} \arrow[u, shift left=0.5ex, "I_{n}'"]
\end{tikzcd}
     \caption{The new quiver representation.}
    \label{repnew}
\end{figure}

This construction defines a map $\Phi: T^{*}\text{Rep}_{Q}(\dv,\dw) \to T^{*}\text{Rep}_{Q}(\dv',\dw')$.

\subsection{Induced map on quiver varieties}
We next check that the map $T^{*}\text{Rep}_{Q}(\dv,\dw) \to T^{*}\text{Rep}_{Q}(\dv',\dw')$ constructed in the last section descends to a map on quiver varieties. In the proofs below, we will freely use the notation $A_{1}$, $B_{1}$, and $C_{k}$ defined in the previous section.

\begin{Proposition}
If $(X,Y,I,J)\in \mu_{\dv,\dw}^{-1}(0)$, then $\Phi(X,Y,I,J) \in \mu_{\dv',\dw'}^{-1}(0)$.
\end{Proposition}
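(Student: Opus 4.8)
The plan is to verify the single defining equation of $\mu_{\dv',\dw'}^{-1}(0)$ one component at a time, i.e. to show that the $i$-th component
$$\mu_{\dv',\dw'}(\Phi(X,Y,I,J))_i = X'_{i-1}Y'_{i-1} - Y'_i X'_i + I'_i J'_i \in \text{End}(V_i')$$
vanishes for every $i\in Q_0$, where $V_i'=V_i\oplus\mathbb{C}^{i-1}$ (so that $V_1'=V_1$, and $W_n'=W_n\oplus\mathbb{C}^n$ at the last node). Since the new framing is supported only at the first and last nodes, there are three qualitatively distinct cases: the node $i=1$ (the incoming term $X'_0Y'_0$ is absent and the framing $I'_1J'_1$ is present), the interior nodes $2\le i\le n-1$ (no framing, so $\mu'_i=X'_{i-1}Y'_{i-1}-Y'_iX'_i$ with $X',Y'$ full block matrices), and the node $i=n$ (the outgoing term $Y'_nX'_n$ is absent and $I'_nJ'_n$ is present). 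In each case I would substitute the block formulas from Section \ref{localconstruction} and expand blockwise with respect to $V_i'=V_i\oplus\mathbb{C}^{i-1}$, so that $\mu'_i$ becomes an array of blocks once the $\mathbb{C}^{i-1}$ summand is itself split.

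The only external input is the hypothesis $(X,Y,I,J)\in\mu_{\dv,\dw}^{-1}(0)$, used in the three forms it takes on $Q$: at an interior node the commutation relation $X_{i-1}Y_{i-1}=Y_iX_i$, at the first node $Y_1X_1 = I_1J_1 = \sum_k A_kB_k$, and at the last node $X_{n-1}Y_{n-1}+I_nJ_n=0$. I expect the expansion to organize along the block decomposition as follows. The $V_i\to V_i$ corner of $\mu'_i$ should collapse to the original component $\mu_{\dv,\dw}(X,Y,I,J)_i$ — for interior nodes literally the difference $-(X_{i-1}Y_{i-1}-Y_iX_i)$, and at nodes $1$ and $n$ the same relation together with the rank-one adjustments coming from the change of framing — and hence vanish. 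The $\mathbb{C}^{i-1}\to\mathbb{C}^{i-1}$ corner should vanish because the two occurrences of $\pm\mathbb{I}_{i-1}$ in $X'$ and $Y'$ cancel. The genuine content sits in the mixed blocks coupling $V_i$ with $\mathbb{C}^{i-1}$: these involve the row $B_1Y_1\cdots Y_{i-1}$, the column $X_{i-1}\cdots X_1A_1$, and the scalar block $C_i$ with entries $c^{j}=B_1(Y_1\cdots Y_{j-1}X_{j-1}\cdots X_1)A_1$.

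The crux — and the step I expect to be the main obstacle — is showing that these mixed blocks cancel. Here the definition of $C_k$ is engineered so that a telescoping occurs: sliding an $X$ past a $Y$ inside a word such as $B_1Y_1\cdots Y_{i-1}X_{i-1}\cdots X_1A_1$ by repeated use of the interior relation $X_{\ell-1}Y_{\ell-1}=Y_\ell X_\ell$ reorganizes the long scalar products into consecutive differences of the quantities $c^{j}$, which then cancel in pairs. Concretely, I would isolate the key identity expressing $B_1Y_1\cdots Y_{i-1}X_{i-1}\cdots X_1A_1$ and its truncations through the $c^{j}$, check that the off-diagonal contributions of $X'_{i-1}Y'_{i-1}$ and of $Y'_iX'_i$ are equal so that their difference is zero, and separately track how the framing corrections at nodes $1$ and $n$ feed into the $V_i$-corner. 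The bookkeeping is delicate precisely because the $\mathbb{C}^{i-1}$ summand is indexed in one order inside $X'$ (the $C_k$-row first, then $-\mathbb{I}_{k-1}$) and in a different order inside $Y'$ (the $\mathbb{I}_{k-1}$ block first, then the column $X_{k-1}\cdots X_1A_1$); the two block structures must be aligned before subtracting, after which the cancellation reduces to matching the telescoped scalar words.
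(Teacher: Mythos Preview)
Your plan is correct and matches the paper's approach: verify the moment map componentwise, handle the three cases $i=1$, $1<i<n$, $i=n$ separately, and compare block by block. However, you are overanticipating the difficulty of the mixed blocks. When you actually carry out the block multiplications, you will find that at an interior vertex the matrices $X'_{k}Y'_{k}$ and $Y'_{k+1}X'_{k+1}$ agree \emph{identically} in every block except the upper-left $V_{k+1}\to V_{k+1}$ corner---no telescoping, no sliding of $X$ past $Y$, no use of the interior relations $X_{\ell-1}Y_{\ell-1}=Y_\ell X_\ell$ is needed for the off-diagonal pieces. The entries $c^{j}=B_1(Y_1\cdots Y_{j-1}X_{j-1}\cdots X_1)A_1$ are chosen precisely so that the row $C_{k+1}$ in $X'_{k+1}$, after multiplication by the $\mathbb{I}_k$ block of $Y'_{k+1}$, reproduces verbatim the row $(C_k,\,c^k)$ that appears in $X'_kY'_k$; the same holds at the last node for $X'_{n-1}Y'_{n-1}$ versus $-I'_nJ'_n$. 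So the paper's proof is a pure bookkeeping exercise: compute both products, observe that all blocks but the top-left coincide on the nose, and then invoke the original relation $X_{k}Y_{k}-Y_{k+1}X_{k+1}=0$ (respectively $-Y_1X_1+I_1J_1=0$ and $X_{n-1}Y_{n-1}+I_nJ_n=0$) in that single corner. The block-alignment issue you flag is real but mild---the $\mathbb{C}^{k}$ summand is ordered consistently as $e_1,\dots,e_k$ in both $X'$ and $Y'$; only the grouping into sub-blocks differs---and once aligned no further identities are required.
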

\begin{proof}
We denote $\Phi(X,Y,I,J)=(X',Y',I',J')$. For the first vertex, we calculate
\begin{align*}
-Y'_1 X'_1 + I'_{1} J'_{1} &=\begin{pmatrix}
Y_1 & A_1
\end{pmatrix}\cdot \begin{pmatrix}
      -X_1 \\
      B_1
      \end{pmatrix} +
      \begin{pmatrix} A_2 & A_3 &\ldots & A_{\dw_1}
      \end{pmatrix}
      \cdot \begin{pmatrix}
      B_2 \\ B_3 \\ \vdots \\ B_{\dw_1}
      \end{pmatrix} \\
      &= -Y_{1} X_{1}+ \sum_{j=1}^{\dw_{1}} A_{j} B_{j} \\
     &= -Y_1 X_1+I_{1} J_{1} =0
\end{align*}
For the last vertex, we calculate
\begin{align*}
X'_{n-1} Y'_{n-1}  &=\begin{pmatrix}
-X_{n-1} & 0 \\ B_1 Y_1\ldots Y_{n-2} & C_{n-1} \\ 0 & -\mathbb{I}_{n-2}
\end{pmatrix}\cdot \begin{pmatrix} Y_{n-1} & 0 & X_{n-2} \ldots X_{1} A_{1} \\ 0 & \mathbb{I}_{n-2} & 0
      \end{pmatrix}  \\
      &=\begin{pmatrix}
-X_{n-1} Y_{n-1} & 0 & -X_{n-1} \ldots X_1 A_1 \\ B_1 Y_1 \ldots Y_{n-1} & C_{n-1} & B_1 Y_1 \ldots Y_{n-2} X_{n-2} \ldots X_1 A_1 \\ 0 & -\mathbb{I}_{n-2} & 0
\end{pmatrix}
\end{align*} 
and
\begin{align*}
I'_n J'_n &= -\begin{pmatrix}  I_{n} & 0 & X_{n-1} \ldots X_{1} A_{1} \\ 0 & \mathbb{I}_{n-1} & 0
      \end{pmatrix}
      \cdot \begin{pmatrix}
      J_n & 0 \\
      B_1 Y_1 \ldots Y_{n-1} & C_n \\
      0 & -\mathbb{I}_{n-1}
      \end{pmatrix} \\
      &=-\begin{pmatrix} I_n J_n & 0 & -X_{n-1} \ldots X_1 A_1 \\ B_1 Y_1 \ldots Y_{n-1} & C_{n-1} & B_1 Y_1 \ldots Y_{n-2} X_{n-2} \ldots X_{1} A_{1} \\ 0 & -\mathbb{I}_{n-2} & 0
\end{pmatrix}
\end{align*} 
which shows that
\[
X'_{n-1} Y'_{n-1} + I'_{n} J'_{n} = 0
\]

Now suppose that $n>2$ and fix $k$ so that $1<k<n$. Then
\begin{align*}
X'_{k} Y'_{k}&=\begin{pmatrix}
 -X_k & 0 \\
      B_1 Y_1 \ldots Y_{k-1} & C_k \\
      0 & -\mathbb{I}_{k-1}
\end{pmatrix} \cdot \begin{pmatrix}
 Y_{k} & 0 & X_{k-1} \ldots X_{1} A_{1} \\ 0 & \mathbb{I}_{k-1} & 0
\end{pmatrix} \\
&=\begin{pmatrix}
-X_k Y_k & 0 & -X_{k} \ldots X_1 A_1 \\ B_1 Y_1 \ldots Y_{k} & C_{k} & B_1 Y_1 \ldots Y_{k-1} X_{k-1} \ldots X_1 A_1 \\ 0 & -\mathbb{I}_{k-1} & 0
\end{pmatrix}
\end{align*}
and
\begin{align*}
   Y'_{k+1} X'_{k+1}&=\begin{pmatrix}
   Y_{k+1} & 0 & X_{k} \ldots X_{1} A_1 \\ 0 & \mathbb{I}_{k} & 0
   \end{pmatrix} \cdot
   \begin{pmatrix}
   -X_{k+1} & 0 \\ B_1 Y_1 \ldots Y_{k} & C_{k+1} \\
  0 & -\mathbb{I}_{k}
   \end{pmatrix} \\
   &= \begin{pmatrix}
  - Y_{k+1} X_{k+1} & 0 & -X_{k} \ldots X_{1} A_1 \\  B_1 Y_1 \ldots Y_k  & C_k & B_1 Y_1 \ldots Y_{k-1} X_{k-1} \ldots X_1 A_1 \\ 0 & -\mathbb{I}_{k-1} & 0
   \end{pmatrix}
\end{align*}
So 
\[
X'_{k} Y'_{k} - Y'_{k+1} X'_{k+1}=0
\]
which concludes the proof.

\end{proof}

Now fix a stability parameter $\theta \in \mathbb{Z}^{n}$.
\begin{Proposition}\label{preservestability}
If $(X,Y,I,J)$ is $\theta$-semistable, then $\Phi(X,Y,I,J)$ is $\theta$-semistable.
\end{Proposition}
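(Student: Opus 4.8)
The plan is to verify the two subspace conditions of the semistability criterion (Proposition \ref{ss}) for $\Phi(X,Y,I,J)=(X',Y',I',J')$, given that $(X,Y,I,J)$ satisfies them. For each condition the idea is the same: from an admissible configuration of subspaces for the new data I would produce a corresponding admissible configuration for the old data, and then compare the two weighted dimension sums. The only genuinely new feature to control is the behavior of the added coordinate directions $\mathbb{C}^{i-1}\subseteq V_i'=V_i\oplus\mathbb{C}^{i-1}$.

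For the first condition, suppose $\{S_i'\}$ is an $(X',Y')$-invariant collection with $S_i'\subseteq\ker J_i'$. I would first prove the structural claim that no added direction ever occurs, i.e. $S_i'\subseteq V_i\times\{0\}$ for every $i$. This is an induction running downward from $i=n$: the block $-\mathbb{I}_{n-1}$ in $J_n'$ forces the $\mathbb{C}^{n-1}$-component of every vector of $S_n'\subseteq\ker J_n'$ to vanish, and then the block $-\mathbb{I}_{k-1}$ in $X_k'$ together with $X_k'(S_k')\subseteq S_{k+1}'$ propagates the vanishing from $S_{k+1}'$ to $S_k'$. Once this is established, the projections $S_i=\pi_i(S_i')\subseteq V_i$ satisfy $\dim S_i'=\dim S_i$; the vanishing of the entry $B_1Y_1\cdots Y_{k-1}$ along $S_k$, read off from the same computation, makes $\{S_i\}$ genuinely $(X,Y)$-invariant, and the constraints at $i=1,n$ upgrade to $S_i\subseteq\ker J_i$ (at $i=1$ using $\ker J_1'$ together with $B_1 S_1=0$). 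Thus $\{S_i\}$ is admissible for the old data and $\sum_i\theta_i\dim S_i'=\sum_i\theta_i\dim S_i\le 0$.

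For the second condition, suppose $\{T_i'\}$ is $(X',Y')$-invariant with $T_i'\supseteq\Ima I_i'$. Dually, I would show that every added direction is forced into $T_i'$: since $\Ima I_n'\supseteq\mathbb{C}^{n-1}$, applying the maps $Y_k'$ and using the blocks $\mathbb{I}_{k-1}$ shows by downward induction that $T_k'\supseteq\mathbb{C}^{k-1}\oplus\mathbb{C}\cdot X_{k-1}\cdots X_1A_1$ for every $k$, with $A_1\in T_1'$ at the bottom. Hence $\dim T_i'=\dim T_i+(i-1)$ for $T_i=\pi_i(T_i')$, and because $X_{k-1}\cdots X_1A_1\in T_k$ the otherwise troublesome $A_1$-term in $Y_k'$ is absorbed, so $\{T_i\}$ is $(X,Y)$-invariant; the containment $T_i\supseteq\Ima I_i$ holds at $i=1$ (using $A_1\in T_1$) and $i=n$, and is vacuous at the middle vertices since $\Ima I_i=0$ there. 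Applying the old second condition to $\{T_i\}$ and substituting $\dim T_i'=\dim T_i+(i-1)$ and $\dim V_i'=\dim V_i+(i-1)$, the correction terms $\sum_i\theta_i(i-1)$ cancel and leave exactly $\sum_i\theta_i\dim T_i'\le\sum_i\theta_i\dim V_i'$.

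The main obstacle, and the reason a naive argument fails, is the ``leakage'' built into the construction: the off-diagonal entries $X_{k-1}\cdots X_1A_1$ in $Y_k'$ and $B_1Y_1\cdots Y_{k-1}$ in $X_k'$ prevent either the projection $\pi_i(S_i')$ or the intersection $S_i'\cap V_i$ from being automatically $(X,Y)$-invariant. The two structural claims above are precisely what neutralize this leakage, and the delicate point is that each claim must be anchored at the distinguished vertex $n$ where the framings have been collected — the kernel of $J_n'$ for the first condition, the image of $I_n'$ for the second — and then propagated across the quiver using the identity blocks $\pm\mathbb{I}_{k-1}$. Verifying that the induction closes and that the resulting old configurations satisfy the kernel/image constraints at the end vertices is where the care is needed; the weighted dimension bookkeeping is then immediate.
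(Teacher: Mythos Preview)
Your argument is correct and follows essentially the same route as the paper: for the $S$-condition you use the $-\mathbb{I}$ blocks in $J_n'$ and the $X_k'$ to force $S_i'\subset V_i$ by downward induction from $n$, and for the $T$-condition you use the $\mathbb{I}$ blocks in $I_n'$ and the $Y_k'$ to force $\mathbb{C}^{i-1}\subset T_i'$, after which the old criterion applies with the obvious dimension bookkeeping. If anything, your write-up is slightly more explicit than the paper's about why the induced configurations are $(X,Y)$-invariant (the ``leakage'' discussion) and about recovering $B_1 S_1=0$ from $X_1'$, but the content is the same.
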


\begin{proof}
We use Proposition \ref{ss}. Suppose $(X,Y,I,J) \in T^*\text{Rep}_{Q}(\dv, \dw)$ is $\theta$-semistable. As above, write $\Phi(X,Y,I,J)=(X',Y',I',J')$.

Let $\{S_{i}\}$ be a collection of subspaces $S_{i} \subset V_{i}'$ preserved by the maps $X_{k}'$ and $Y_{k}'$ such that $S_{1} \subset \ker J_{1}'$ and $S_{n} \subset \ker J_{n}'$. By definition of $J_{n}'$, this forces $S_{n} \subset V_{n} \subset V_{n} \oplus \mathbb{C}^{n-1}$. By definition of $X_{n-1}'$, it is obvious that if $X_{n-1}'(S_{n-1})  \subset V_{n}$, then $S_{n-1} \subset V_{n-1}$. Continuing inductively, we see that $S_{i} \subset V_{i}$ for all $i$. Furthermore, $X_{1}'(S_{1}) \subset V_{2}$ and $S_{1} \in \ker J_{1}'$ together imply that $S_1 \subset \ker J_{1}$. Since $(X,Y,I,J)$ is $\theta$-semistable, Proposition \ref{ss} implies that $\sum_{j=1}^{n} \theta_{j} \cdot \dim_{\mathbb{C}} S_{j} \leq 0$.

Let $\{T_{i}\}$ be a collection of subspaces $T_{i} \subset V_{i}'$ preserved by the maps $X_{k}'$ and $Y_{k}'$ such that $T_{1} \supset \Ima I'_{1}$ and $T_{n} \supset \Ima I_{n}'$. Let $T_{i}'= T_{i} \cap V_{i}$. Since $T_{i}$ is preserved by $X_{i}'$ and $Y_{i}'$, it is clear from the definition of $X_{i}'$ and $Y_{i}'$ that $T_{i}'$ is preserved by $X_{i}$ and $Y_{i}$. 


By definition of $I_{n}'$, it is clear that $T_{n} \supset U_{n} \oplus \mathbb{C}^{n-1}$ for some subspace $U_{n} \subset V_{n}$ and $U_{n} \supset \Ima I_{n}$. Proceeding inductively, we also see that $T_{i}\supset U_{i} \oplus \mathbb{C}^{i-1}$ for some subspace $U_{i} \subset V_{i}$ for all $i$. In particular, $T_2 \supset U_{2} \oplus \mathbb{C}$.  Since $Y_{1}'=\begin{pmatrix} Y_{1} & A_{1}\end{pmatrix}$, we see that $T_{1} \supset \Ima A_{1}$. Thus $T_{1} \supset \Ima I_1$. From $T_{1} \supset \Ima I_1$ and $T_{n} \supset \Ima I_{n}'$, we see that $T_{1}'\supset \Ima I_{1}$ and $T_{n}'\supset \Ima I_{n}$.

Thus we have
\begin{align*}
\sum_{j=1}^{n}\theta_{j} \dim_{\mathbb{C}} T_j &= \sum_{i} \theta_{i} (\dim_{\mathbb{C}} T_{i}'+i-1) \\
&\leq \sum_{j=1}^{n} \theta_{j} \dim_{\mathbb{C}} V_{j} + \sum_{i} \theta_{i}(i-1) \\
&= \sum_{j=1}^{n} \theta_{j}\dim_{\mathbb{C}} V'_{j}
\end{align*}

We have verified both conditions of Proposition \ref{ss}. Thus $(X',Y',I',J')$ is $\theta$-semistable.

\end{proof}

We denote $G_{\mathsf{v}'}= \prod_{i=1}^{n} GL(V_i \oplus \mathbb{C}^{i-1})$. We consider the Nakajima quiver varieties 
\begin{align*}
  & \mathcal{M}:= \mathcal{M}_{\theta}(\dv,\dw):= \mu_{\dv,\dw}^{-1}(0)/\!\!/G_{\dv} \\
  & \mathcal{M}':=\mathcal{M}_{\theta}(\dv',\dw')= \mu_{\dv',\dw'}^{-1}(0)/\!\!/G_{\dv'}
\end{align*}

There is a natural inclusion $\rho$ from $G_{\mathsf{v}}$ to $G_{\mathsf{v}'}$, defined by inclusion into the first component. Write $g\in G_{\dv}$ as $g=(g_{1},g_{2},\ldots,g_{n})$. Then
$$
\rho=(\rho_{1},\ldots, \rho_{n}), \quad \rho_{i}(g_{i})= \begin{pmatrix}
g_i & 0 \\ 0 & \mathbb{I}_{i-1}
\end{pmatrix}
$$
\begin{Proposition}\label{equivariant}
The map $\Phi$ is $\rho$-equivariant. In particular, $\Phi$ descends to a map $\mathcal{M} \to \mathcal{M}'$, which we also denote by $\Phi$.
\end{Proposition}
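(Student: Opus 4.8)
The plan is to verify the identity $\Phi(g\cdot(X,Y,I,J))=\rho(g)\cdot\Phi(X,Y,I,J)$ directly, block by block, for every $g=(g_{1},\dots,g_{n})\in G_{\dv}$. Recall that $G_{\dv}$ acts on $T^{*}\text{Rep}_{Q}(\dv,\dw)$ by $X_{k}\mapsto g_{k+1}X_{k}g_{k}^{-1}$, $Y_{k}\mapsto g_{k}Y_{k}g_{k+1}^{-1}$, $I_{i}\mapsto g_{i}I_{i}$, $J_{i}\mapsto J_{i}g_{i}^{-1}$, while $\rho(g)$ acts on $\mathcal{M}'$ through the block-diagonal matrices $\rho_{i}(g_{i})=\left(\begin{smallmatrix}g_{i}&0\\0&\mathbb{I}_{i-1}\end{smallmatrix}\right)$ on $V_{i}'=V_{i}\oplus\mathbb{C}^{i-1}$ (and the analogous matrix on $W_{n}'$). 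Thus on $\mathcal{M}'$ the induced action reads $X_{k}'\mapsto\rho_{k+1}(g_{k+1})X_{k}'\rho_{k}(g_{k})^{-1}$, and similarly for the remaining maps, and the goal is to match this against $\Phi$ evaluated on the transformed data.

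First I would record the transformation rules for the elementary ingredients of the construction. The column $A_{1}$ transforms as a vector in $V_{1}$, namely $A_{1}\mapsto g_{1}A_{1}$, and the row $B_{1}$ as a covector, $B_{1}\mapsto B_{1}g_{1}^{-1}$. The composites telescope: $Y_{1}\cdots Y_{j-1}\mapsto g_{1}(Y_{1}\cdots Y_{j-1})g_{j}^{-1}$ and $X_{j-1}\cdots X_{1}\mapsto g_{j}(X_{j-1}\cdots X_{1})g_{1}^{-1}$. Consequently the mixed rows and columns appearing in $X_{k}',Y_{k}',I_{n}',J_{n}'$ transform cleanly, $B_{1}Y_{1}\cdots Y_{k-1}\mapsto (B_{1}Y_{1}\cdots Y_{k-1})g_{k}^{-1}$ and $X_{k-1}\cdots X_{1}A_{1}\mapsto g_{k}(X_{k-1}\cdots X_{1}A_{1})$, i.e. exactly as a covector, resp. vector, on $V_{k}$. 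The crucial point is the $G_{\dv}$-invariance of the scalar entries $c^{j}=B_{1}(Y_{1}\cdots Y_{j-1}X_{j-1}\cdots X_{1})A_{1}$ of $C_{k}$: each $c^{j}$ is a closed loop beginning and ending at the first vertex, so the inserted group elements cancel in telescoping pairs ($B_{1}g_{1}^{-1}\cdot g_{1}\cdots g_{j}^{-1}\cdot g_{j}\cdots g_{1}^{-1}\cdot g_{1}A_{1}=c^{j}$), leaving $C_{k}$ fixed.

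With these rules in hand the verification is a uniform block computation, which I would carry out for $X_{k}'$ and $Y_{k}'$ and then note that $I_{1}',J_{1}',I_{n}',J_{n}'$ are entirely analogous. Since $\rho_{i}$ acts as the identity on the auxiliary summands, every block equal to $\pm\mathbb{I}$ or $0$ is automatically preserved; every block built from $X_{k},Y_{k}$ is conjugated by $\rho$ into the correspondingly transformed map; the mixed blocks $B_{1}Y_{1}\cdots Y_{k-1}$ and $X_{k-1}\cdots X_{1}A_{1}$ are multiplied on the appropriate side by $g_{k}^{\mp1}$, which is precisely the factor supplied by $\rho_{k}(g_{k})^{\pm1}$; and the invariant block $C_{k}$ sits between summands on which $\rho$ acts trivially. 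The only place demanding care, and the main (if mild) obstacle, is confirming that in each mixed block the stray $g_{1}^{\pm1}$ coming from $A_{1}$ or $B_{1}$ cancels the $g_{1}^{\mp1}$ produced by the telescoping composite, so that the surviving factor is exactly the $g_{k}$ that $\rho_{k}$ contributes rather than a mismatched index; keeping the refined decomposition $V_{k+1}'=V_{k+1}\oplus\mathbb{C}\oplus\mathbb{C}^{k-1}$ explicit makes this transparent.

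Finally, to deduce that $\Phi$ descends, I would combine the three preceding propositions. We know $\Phi$ maps $\mu_{\dv,\dw}^{-1}(0)$ into $\mu_{\dv',\dw'}^{-1}(0)$ and sends $\theta$-semistable points to $\theta$-semistable points, and by the equivariance just established it carries each $G_{\dv}$-orbit into a single $G_{\dv'}$-orbit, because $\rho(G_{\dv})\subseteq G_{\dv'}$. Hence the composite $\mu_{\dv,\dw}^{-1}(0)^{ss}\xrightarrow{\Phi}\mu_{\dv',\dw'}^{-1}(0)^{ss}\to\mathcal{M}'$ is constant on $G_{\dv}$-orbits, so by the universal property of the GIT quotient it factors through $\mathcal{M}=\mu_{\dv,\dw}^{-1}(0)^{ss}/G_{\dv}$, yielding the desired map $\Phi\colon\mathcal{M}\to\mathcal{M}'$.
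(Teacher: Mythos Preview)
Your proposal is correct and follows essentially the same approach as the paper: a direct block-by-block verification that $\Phi(g\cdot(X,Y,I,J))=\rho(g)\cdot\Phi(X,Y,I,J)$, with the key observation that the scalars $c^{j}$ in $C_{k}$ are $G_{\dv}$-invariant because the inserted group elements cancel telescopically. One small slip: $\rho(g)\in G_{\dv'}$ does not act on $W_{n}'$ at all (the gauge group leaves framing spaces fixed), so your parenthetical ``and the analogous matrix on $W_{n}'$'' should be deleted---the framing maps $I_{n}',J_{n}'$ transform only via $\rho_{n}(g_{n})$ on the $V_{n}'$ side.
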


\begin{proof}
    This follows from the block form of $\Phi(X,Y,I,J)$. For example,
    \begin{align*}
    \rho_{k+1}(g_{k+1}) X_{k}' \rho_{k}(g_{k})^{-1} &= \begin{pmatrix} g_{k+1} & 0 \\ 0 & \mathbb{I}_{k} \end{pmatrix}\begin{pmatrix}
 -X_k & 0 \\
      B_1 Y_1 \ldots Y_{k-1} & C_k \\
      0 & -\mathbb{I}_{k-1}
\end{pmatrix} \begin{pmatrix} g_{k}^{-1} & 0 \\ 0 & \mathbb{I}_{k-1} \end{pmatrix} \\
&=\begin{pmatrix}
 -g_{k+1} X_k g_{k}^{-1} & 0 \\
      B_1 Y_1 \ldots Y_{k-1} g_{k}^{-1} & C_k \\
      0 & -\mathbb{I}_{k-1}
\end{pmatrix}
    \end{align*}

On the other hand, the component of $\Phi(g \cdot (X,Y,I,J))$ inside $\text{Hom}(V_{k}',V_{k+1}')$ is
\begin{align*}
    \begin{pmatrix}
        -g_{k+1}X_{k} g_{k}^{-1} & 0 \\ (B_{1} g_{1}^{-1}) (g_1 Y_1 g_2^{-1}) \ldots (g_{k-1} Y_{k-1} g_{k}^{-1}) & C_{k} \\ 0 & -\mathbb{I}_{k-1}
    \end{pmatrix} \\=\begin{pmatrix}
 -g_{k+1} X_k g_{k}^{-1} & 0 \\
      B_1 Y_1 \ldots Y_{k-1} g_{k}^{-1} & C_k \\
      0 & -\mathbb{I}_{k-1}
\end{pmatrix}
\end{align*}
The $C_{k}$ block is unchanged because of the formula
\begin{multline*}
B_{1}(Y_{1} \ldots Y_{j-1} X_{j-1} \ldots X_{1})A_{1} \\ = B_{1} g_{1}^{-1}(g_{1}Y_{1} g_{2}^{-1}\ldots g_{j-1} Y_{j-1} g_{j}^{-1} g_{j} X_{j-1} g_{j-1}^{-1} \ldots g_{2} X_{1} g_{1}^{-1})g_{1} A_{1}
\end{multline*}

Similar computations for the rest of the data show that $\Phi(g \cdot (X,Y,I,J))= \rho(g) \Phi(X,Y,I,J)$.
\end{proof}

By the previous three propositions, $\Phi$ descends to a map of quiver varieties. 

\begin{Remark}
From now on, we will use $\Phi$ to denote the induced map
\begin{equation}\label{localphi}
\Phi: \mathcal{M} \to \mathcal{M}'
\end{equation}
\end{Remark}

\subsection{Injectivity}

\begin{Proposition}\label{injective}
The map (\ref{localphi}) of quiver varieties is injective.
\end{Proposition}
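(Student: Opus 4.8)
The plan is to prove injectivity at the level of orbits. Since $\mathcal M$ and $\mathcal M'$ are GIT quotients of their semistable loci, two semistable quadruples represent the same point of the quotient exactly when their $G$-orbit closures meet; this is $S$-equivalence in general and plain orbit equality on the stable locus. It therefore suffices to show: if $m=(X,Y,I,J)$ and $\tilde m=(\tilde X,\tilde Y,\tilde I,\tilde J)$ are $\theta$-semistable and $g'\cdot\Phi(m)=\Phi(\tilde m)$ for some $g'=(g_1',\dots,g_n')\in G_{\dv'}$, then $\tilde m=g\cdot m$ for some $g\in G_{\dv}$. Writing each $g_i'\in GL(V_i\oplus\mathbb C^{i-1})$ in block form $g_i'=\left(\begin{smallmatrix} a_i & b_i\\ c_i & d_i\end{smallmatrix}\right)$ relative to $V_i'=V_i\oplus\mathbb C^{i-1}$, the goal becomes to show that the diagonal blocks $g:=(a_1,\dots,a_n)$ lie in $G_{\dv}$ and conjugate $m$ to $\tilde m$. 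Note that $V_1'=V_1$, so $g_1'=a_1$ already has no off-diagonal part.

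The engine of the argument is the rigidity forced by the identity blocks $\mathbb I_{i-1}$ built into the construction of section \ref{localconstruction}. First I would use the framing relation $g_n'I_n'=\tilde I_n'$: comparing the $n-1$ columns of $I_n'$ occupied by the block $\mathbb I_{n-1}$ gives at once $b_n=0$ and $d_n=\mathbb I_{n-1}$. Then I would run a downward induction on the vertex index using $g_{k+1}'X_k'=\tilde X_k'g_k'$ (and the analogous relation for $Y_k'$). Comparing the columns of $X_k'$ indexed by the summand $\mathbb C^{k-1}$, where $X_k'$ carries the block $-\mathbb I_{k-1}$, yields $d_k=\mathbb I_{k-1}$ together with linear constraints coupling $b_k$ to $b_{k+1}$ and to the auxiliary entries $C_k$. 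Reading instead the $V_k\to V_{k+1}$ corner of the same relations produces the desired intertwining identities $a_{k+1}X_k=\tilde X_k a_k$, $a_kY_k=\tilde Y_k a_{k+1}$, $a_iI_i=\tilde I_i$ and $J_i=\tilde J_i a_i$, up to correction terms arising from the off-diagonal blocks and from the nested auxiliary entries $B_1Y_1\cdots Y_{k-1}$ and $X_{k-1}\cdots X_1A_1$ that encode the deleted framing direction $(A_1,B_1)$ at the first vertex.

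The remaining and most delicate step is to show that these correction terms vanish and that each $a_i$ is invertible, so that $g=(a_1,\dots,a_n)\in G_{\dv}$ genuinely satisfies $g\cdot m=\tilde m$. This is where the semistability of $m$ and $\tilde m$ (Proposition \ref{ss}) is used: the off-diagonal blocks, being compatible with $X,Y$ and either annihilating or being annihilated by the framing maps, are obstructed by the very semistability conditions that forbid proper invariant subspaces separating the framings from $V$. The same input is used to recover $A_1,B_1$ from the entries of $\Phi(m)$ and to conclude that $a_1$ intertwines the full framing maps $I_1,J_1$, not merely their truncations $I_1',J_1'$.

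I expect the main obstacle to be precisely the control of the off-diagonal blocks $b_i$ and $c_i$: a priori $g'$ need not preserve the decomposition $V_i'=V_i\oplus\mathbb C^{i-1}$, so the corner-by-corner reading of the relations is contaminated both by these blocks and by the nested auxiliary entries. Organizing the induction so that each newly created coordinate of $\mathbb C^{i-1}$ is pinned by an identity block one vertex over, and invoking semistability at the right moment to eliminate the contamination, is the crux; once this is done, the verification that $g$ conjugates $m$ to $\tilde m$ is the same kind of block bookkeeping already carried out in Propositions \ref{preservestability} and \ref{equivariant}.
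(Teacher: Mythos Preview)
Your overall strategy coincides with the paper's: reduce injectivity to showing that any $g'\in G_{\dv'}$ carrying $\Phi(m)$ to $\Phi(\tilde m)$ actually lies in $\rho(G_{\dv})$, and your first step using the $\mathbb{I}_{n-1}$ block of $I_n'$ to force $b_n=0$ and $d_n=\mathbb{I}_{n-1}$ is exactly what the paper does. Where you diverge is in the treatment of the remaining off-diagonal block $c_n$ (and inductively the $c_k$, $b_k$): you anticipate needing the $\theta$-semistability of $m$ and $\tilde m$ to kill these, and you flag this as the delicate step.

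The paper's argument shows this is unnecessary. It uses the \emph{other} framing relation at the last vertex, namely $J_n'(g_n')^{-1}=\tilde J_n'$: writing $(g_n')^{-1}=\left(\begin{smallmatrix}h_{n,1}&0\\h_{n,3}&\mathbb{I}_{n-1}\end{smallmatrix}\right)$, the $-\mathbb{I}_{n-1}$ block sitting inside $J_n'$ forces $h_{n,3}=0$, hence $c_n=g_{n,3}=0$. Once $g_n'$ is block diagonal, the same mechanism propagates downward: the identity block in $Y_k'$ pins $b_k=0$ and $d_k=\mathbb{I}_{k-1}$, and the identity block in $X_k'$ (read on the row side) pins $c_k=0$. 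No appeal to Proposition~\ref{ss} is made anywhere, and invertibility of $a_i=g_{i,1}$ is automatic from $g_i'\in GL(V_i')$ once $g_i'$ is block diagonal. Your plan might still be pushed through, but the route via semistability is a detour around a door that is already open; you are missing only the observation that $J_n'$ contains an identity block just as $I_n'$ does.
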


\begin{proof}
We must show that if $\Phi(X,Y,I,J)$ and $\Phi(X',Y',I',J')$, are in the same $G_{\mathsf{v}'}$-orbit, then $(X,Y,I,J)$ and $(X',Y',I',J')$ are in the same $G_{\mathsf{v}}$-orbit. Note that in this proof $(X',Y',I',J')$ denotes a point in the domain of (\ref{localphi}), in contrast to previous usage.

So suppose that $g \cdot \Phi(X,Y,I,J)=\Phi(X',Y',I',J')$. We write the components $g_i$ of $g$, where $g_i \in GL(V_i\oplus \mathbb{C}^{i-1})$ in $(\mathsf{v}_i+(i-1))\times (\mathsf{v}_i+(i-1))$ block form as
\[
\begin{pmatrix}
g_{i,1} & g_{i,2} \\
g_{i,3} & g_{i,4}
\end{pmatrix}
\]

Consider the component of the equation $g \cdot \Phi(X,Y,I,J)=\Phi(X',Y',I',J')$ lying in $\text{Hom}(W_{n}',V_{n}')$:
\begin{align*}
\begin{pmatrix}
g_{n,1} & g_{n,2} \\
g_{n,3} & g_{n,4}
\end{pmatrix} \cdot  \begin{pmatrix}  I_{n} & 0 & X_{n-1} \ldots X_{1} A_{1} \\ 0 & \mathbb{I}_{n-1} & 0
      \end{pmatrix}&= \begin{pmatrix}
      g_{n,1} I_n & g_{n,2} & g_{n,1} X_{n-1} \ldots X_1 A_1 \\ g_{n,3} I_n & g_{n,4} & g_{n,3} X_{n-1} \ldots x_1 A_1
      \end{pmatrix} \\
      &=\begin{pmatrix}
      I_n' & 0 & X_{n-1}' \ldots X_1 ' I_1' \\
      0 & \mathbb{I}_{n-1} & 0
      \end{pmatrix}
\end{align*}
This implies that $g_{n,2}=0$ and $g_{n,4}=\mathbb{I}_{n-1}$. 

The inverse of $g_{n}$ is given as follows:
\[ g_{n}^{-1}=
\begin{pmatrix}
g_{n,1}& 0 \\ g_{n,3} & \mathbb{I}_{n-1}
\end{pmatrix}^{-1} = \begin{pmatrix}
g_{n,1}^{-1} & 0 \\ -g_{n,3} g_{n,1}^{-1} & \mathbb{I}_{n-1}
\end{pmatrix}
\]
We write this as a $(\mathsf{v}_n+(n-1))\times(\mathsf{v}_n+(n-2)+1)$ block matrix as
\[
\begin{pmatrix}
g_{n,1}^{-1} & 0 & 0 \\ -g_{n,3} g_{n,1}^{-1} & \mathbb{I}_{n-1,n-2} & D
\end{pmatrix}
\]
where $\mathbb{I}_{n-1,n-2}$ is the $(n-1)\times (n-2)$ matrix with $1$'s on the main diagonal and $D$ is the $(n-1)\times 1$ matrix with 1 in the last entry and 0 elsewhere.

Write the inverse of $g_n$ as 
\[g_{n}^{-1}
\begin{pmatrix}
h_{n,1} & 0 \\ h_{n,3} & \mathbb{I}_{n-1}
\end{pmatrix}
\]
and consider the component of the equation  $g \cdot \Phi(X,Y,I,J)=\Phi(X',Y',I',J')$ in $\text{Hom}(V_{n}',W_{n}')$:

\begin{align*} &-\begin{pmatrix}
  J_n & 0 \\
      B_1 Y_1 \ldots Y_{n-1} & C_n \\
      0 & -\mathbb{I}_{n-1} 
\end{pmatrix}\cdot
  \begin{pmatrix}
h_{n,1} & 0 \\ h_{n,3} & \mathbb{I}_{n-1}
\end{pmatrix} \\
&= -\begin{pmatrix}
J_n h^{n}_1 & 0 \\ B_1 Y_1 \ldots Y_{n-1} h_{n,1} + C_n h_{n,3} & C_n \mathbb{I}_{n-1} \\ - h_{n,3} & -\mathbb{I}_{n-1}
\end{pmatrix} \\
&=\begin{pmatrix}
J_n' & 0 \\ B_1' Y_1' \ldots Y_{n-1}' & C_n' \\ 0 & -\mathbb{I}_{n-1}
\end{pmatrix}
\end{align*}
So $h_{n,3}=0$, which implies that $g_{n,3}=0$.

Continuing inductively, we can show that 
\[
g_i = \begin{pmatrix}
g_{i,1} & 0\\
0 & \mathbb{I}_{i-1}
\end{pmatrix}
\]
which means that $g=\rho((g_{i,1})_{1\leq i\leq n})$. Letting $\tilde{g}=(g_{i,1})_{1 \leq i \leq n}$, we see that $\tilde{g} \cdot (X,Y,I,J)= (X',Y',I',J')$.

\end{proof}

\subsection{Torus equivariance}\label{torusmap}

The varieties $\mathcal{M}$ and $\mathcal{M}'$ are acted on by tori $\bT$ and $\bT'$, respectively. 

Denote the coordinates on torus $\bT$ by $(a_{1,1},\ldots,a_{1,\dw_1},a_{n,1},\ldots,a_{n,\dw_n}, \hbar)$ and the coordinates on torus $\bT'$ by $(b_{1,1}, \ldots, b_{1,\dw_1-1}, b_{n,1},\ldots b_{n,\dw_n},c_1,\ldots,c_{n},\hbar')$. We define the map
\[
\iota: \bT \to \bT'
\]
which is given on coordinates by
\begin{align}\label{torimap}
    &b_{1,j-1}\mapsto a_{1,j}, \quad \text{for} \quad  j \in \{2,\ldots,\dw_1\} \\ \nonumber
    &b_{n,j}\mapsto a_{n,j}, \quad \text{for} \quad  j \in \{1,\ldots,\dw_n\} \\ \nonumber
    &c_{j}\mapsto\hbar^{j-n} a_{1,1} , \quad \text{for} \quad j \in \{1,\ldots,n\} \\ \nonumber
    &\hbar'\mapsto\hbar
\end{align}
We will abuse notation and just write $\hbar$ instead of $\hbar'$. We will abbreviate elements of $\bT$ as $(a,\hbar)$ and elements of $\bT'$ as $(b,c,\hbar)$.

\begin{Proposition}\label{toruseqprop}
The map $\Phi$ is equivariant with respect to $\iota$
\end{Proposition}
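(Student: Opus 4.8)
The plan is to verify the identity $\Phi(t\cdot p)=\iota(t)\cdot\Phi(p)$ at the level of the quiver varieties, i.e.\ to show that for every $t=(a,\hbar)\in\bT$ and every $p=(X,Y,I,J)\in\mu_{\dv,\dw}^{-1}(0)$ the two points $\Phi(t\cdot p)$ and $\iota(t)\cdot\Phi(p)$ of $T^{*}\text{Rep}_{Q}(\dv',\dw')$ lie in a common $G_{\dv'}$-orbit. Since $\Phi$ already descends to a map $\mathcal{M}\to\mathcal{M}'$ by Proposition \ref{equivariant}, this is exactly the assertion of $\iota$-equivariance, so the proof reduces to a direct computation on representatives together with the construction of an explicit gauge transformation relating them.

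First I would record the two torus actions. On the source, $\mathbb{C}^{\times}_{\hbar}$ rescales the cotangent maps $Y_{i},J_{i}$ by $\hbar^{-1}$ and fixes $X_{i},I_{i}$, while $a$ acts on the columns $A_{j}$ of $I_{1}$, the rows $B_{j}$ of $J_{1}$, and on $I_{n},J_{n}$. On the target, $\iota(t)$ acts only through the framings $W'_{1},W'_{n}$ and through $\hbar$; crucially it does \emph{not} rescale the internal summands $\mathbb{C}^{i-1}\subset V'_{i}$, since those belong to the gauged vector spaces. I would then compute $\Phi(t\cdot p)$ block by block. The key observation is that the scalings of $A_{1}$ and $B_{1}$, together with the $\hbar$-weights of the $Y_{i}$, combine so that each building block acquires a single monomial prefactor: one finds $\widetilde{c^{\,j}}=\hbar^{-j}c^{\,j}$ for the entries of $C_{k}$, together with $\widetilde{B_{1}Y_{1}\cdots Y_{k-1}}=\hbar^{-k}a_{1,1}\,B_{1}Y_{1}\cdots Y_{k-1}$ and $\widetilde{X_{k-1}\cdots X_{1}A_{1}}=a_{1,1}^{-1}X_{k-1}\cdots X_{1}A_{1}$, while $-X_{k}$ and the structural blocks $\pm\mathbb{I}$ are unchanged.

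Next I would exhibit the gauge transformation $g'(t)=(g'_{1},\ldots,g'_{n})\in G_{\dv'}$ that absorbs these prefactors, namely
\[
g'_{k}=\begin{pmatrix}\mathbb{I}_{\dv_{k}} & 0\\ 0 & \mathrm{diag}\!\big(\hbar^{1-k}a_{1,1},\,\hbar^{2-k}a_{1,1},\,\ldots,\,\hbar^{-1}a_{1,1}\big)\end{pmatrix},
\]
so that the $j$-th internal basis vector of $V'_{k}$ carries weight $\hbar^{\,j-k}a_{1,1}$. I would then check $\Phi(t\cdot p)=g'(t)\cdot\big(\iota(t)\cdot\Phi(p)\big)$ by comparing, for each of $X'_{k},Y'_{k},I'_{1},J'_{1},I'_{n},J'_{n}$, the entry-by-entry effect of $g'(t)$ and $\iota(t)$ against the monomials found above. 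The telescoping of exponents is what lets a single diagonal $g'(t)$ work simultaneously for $X'_{k}$ (where the new row sits first) and $Y'_{k}$ (where the new column sits last): although these maps play the internal coordinates off against each other differently, both impose the same weight $\hbar^{\,j-k}a_{1,1}$ on the $j$-th internal vector, and the blocks $I'_{1},J'_{1}$ are handled by $g'_{1}=\mathbb{I}$ together with the substitutions $b_{1,j-1}\mapsto a_{1,j}$.

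The step I expect to be the main obstacle is the final vertex, where the framing maps $I'_{n}$ and $J'_{n}$ interchange the internal coordinates of $V'_{n}$ with the $n$ new framings of $W'_{n}$ through the identity submatrices $\mathbb{I}_{n-1}$. For these blocks to be preserved, the gauge weight $\hbar^{\,j-n}a_{1,1}$ of the $j$-th internal vector of $V'_{n}$ must exactly equal the torus weight of the $j$-th new framing — which is precisely the prescription $c_{j}\mapsto\hbar^{\,j-n}a_{1,1}$ in the definition of $\iota$. In other words, matching the last-vertex framings is what forces the exponents appearing in $\iota$; once this compatibility is in hand, the remaining blocks follow from the monomial bookkeeping above, and the $\iota$-equivariance of $\Phi$ follows.
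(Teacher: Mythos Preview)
Your proposal is correct and takes essentially the same approach as the paper: both exhibit the identical diagonal gauge transformation $g'_{k}=\mathrm{diag}(\mathbb{I}_{\dv_{k}},\hbar^{1-k}a_{1,1},\ldots,\hbar^{-1}a_{1,1})$ and verify $\Phi(t\cdot p)=g'(t)\cdot(\iota(t)\cdot\Phi(p))$ block by block. The only cosmetic difference is that the paper builds $g'$ inductively, starting at the last vertex and working backwards, whereas you write down the general formula for $g'_{k}$ at once and identify the monomial prefactors $\hbar^{-j}c^{j}$, $\hbar^{-k}a_{1,1}$, $a_{1,1}^{-1}$ beforehand; the substance of the verification is the same.
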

\begin{proof}
We must show that $\iota(a,\hbar) \cdot \Phi(p) = \Phi((a,\hbar) \cdot p)$. Let $(X,Y,I,J)$ be a representative of $p$. Then we must show that \begin{equation}\label{toruseq}
    \Phi((a,\hbar) \cdot (X,Y,I,J)) =g (\cdot \iota(a,\hbar)  \cdot \Phi(X,Y,I,J) )
\end{equation}
for some $g \in G_{\dv'}$ (we have abused notation here, using $\Phi$ for the map on the prequotient). We will inductively define the appropriate $g$. Write each $g_{i} \in \text{End}(V_{i})$ in block form as
$$
g_{i}=\begin{pmatrix}
    g_{i,1} & g_{i,2} \\
    g_{i,3} & g_{i,4}
\end{pmatrix}
$$
Set $g_{n,1}=\mathbb{I}_{\dv_n,\dv_n}$ and $g_{n,4}=\text{diag}(\hbar^{1-n} a_{1,1},\ldots,\hbar^{-1} a_{1,1})$. Then the component of $g \cdot (\iota(a,\hbar) \cdot \Phi(X,Y,I,J))$ in $\text{Hom}(W_{n},V_{n})$ is
\begin{multline*}
\hbar^{-1} \cdot
\begin{pmatrix}
\text{diag}(a_{n,1},\ldots,a_{n,\dw_n}) J_n g_{n,1}^{-1} & 0  \\
\hbar^{1-n} a_{1,1} B_1 Y_1 \ldots Y_{n-1}g_{n,1}^{-1} & \hbar^{1-n} a_{1,1} C_n g_{n,4}^{-1} \\
0 & -\text{diag}(\hbar^{2-n}a_{1,1},\ldots,a_{1,1}) g_{n,4}^{-1}
\end{pmatrix} \\
=   \begin{pmatrix}
\hbar^{-1} \text{diag}(a_{n,1},\ldots,a_{n,\dw_n}) J_n & 0  \\
\hbar^{-n} a_{1,1} B_1 Y_1 \ldots Y_{n-1} &  \hbar^{-n} a_{1,1} C_n g_{n,4}^{-1} \\
0 & -\mathbb{I}_{n-1}
\end{pmatrix}
\end{multline*}
We calculate
\[
\hbar^{-n} a_{1,1} C_n g_{n,4}^{-1} = \begin{pmatrix}
\hbar^{-1} B_1 A_1 & \hbar^{-2} B_1 Y_1 X_1 A_1 & \ldots & \hbar^{-n+1} B_1 (Y_1 \ldots Y_{n-2} X_{n-2} \ldots X_1 ) A_1
\end{pmatrix}
\]
so that (\ref{toruseq}) holds for this component.

Similarly, the component of $g \cdot (\iota(a,\hbar) \cdot \Phi(X,Y,I,J))$ in $\text{Hom}(W_{n},V_{n})$ is
\begin{multline*}
\begin{pmatrix}
g_{n,1} I_n \text{diag}(a_{n,1},\ldots,a_{n,\dw_n})^{-1} & 0 & g_{n,1} X_{n-1} \ldots X_{1} A_{1} a_{1,1}^{-1} \\
0 & g_{n,4} \text{diag}(\hbar^{1-n} a_{1,1},\ldots,\hbar^{-1}a_{1,1})^{-1} & 0
\end{pmatrix} \\
=\begin{pmatrix}
I_n \text{diag}(a_{n,1},\ldots,a_{n,\dw_n})^{-1} & 0 & X_{n-1} \ldots X_1 A_1 a_{1,1}^{-1} \\
0 &  \mathbb{I}_{n-1} & 0
\end{pmatrix} \\
\end{multline*}
which equals the component of $\Phi((a,\hbar)\cdot (X,Y,I,J))$ in $\text{Hom}(W_{n},V_{n})$.

Next, set $g_{n-1,1}=\mathbb{I}_{\dv_{n-1},\dv_{n-1}}$, $g_{n-1,4}=\text{diag}(\hbar^{2-n}a_{1,1},\ldots,\hbar^{-1}a_{1,1})$, $g_{n-1,2}=0$, and $g_{n-1,3}=0$. The torus $\iota(\bT)$ acts trivially on the component of $\Phi(X,Y,I,J)$ in $\text{Hom}(V_{n-1},V_{n})$; so we must show that this component is preserved by the action of $g$. We compute
\begin{align*}
g_{n} X'_{n-1} g_{n-1}^{-1} &= \begin{pmatrix}
-X_{n-1} & 0 \\ \hbar^{1-n}a_{1,1} B_1 Y_1 \ldots Y_{n-2} & \hbar^{1-n} a_{1,1} C_{n-1} g_{n-1,4}^{-1} \\ 0 & \text{diag}(\hbar^{2-n} a_{1,1},\ldots, \hbar^{-1} a_{1,1}) g_{n-1,4}^{-1}
\end{pmatrix} \\
&= \begin{pmatrix}
-X_{n-1} & 0 \\ \hbar^{1-n} a_{1,1} B_1 Y_1 \ldots Y_{n-2} & \hbar^{1-n}a_{1,1} C_{n-1} g_{n-1,4}^{-1} \\ 0 & \mathbb{I}_{n-2}
\end{pmatrix} 
\end{align*}
Note that 
\[
\hbar^{1-n} a_{1,1} C_{n-1}g_{n-1,4}^{-1} = \begin{pmatrix}
\hbar^{-1} B_1 A_1 & \hbar^{-2} B_1 Y_1 X_1 A_1 & \ldots & \hbar^{-(n-2)} B_1 (Y_1 \ldots Y_{n-3} X_{n-3} \ldots X_1) A_1
\end{pmatrix}
\]
so that (\ref{toruseq}) holds for the component in $\text{Hom}(V_{n-1},V_{n})$. For the component in $\text{Hom}(V_{n},V_{n-1})$, we have
\begin{multline*}
g_{n-1} \hbar^{-1} 
\begin{pmatrix}
      Y_{n-1} & 0 & X_{n-2} \ldots X_{1} A_{1} \\ 0 & \mathbb{I}_{n-2} & 0
     \end{pmatrix}  g_n^{-1} \\ 
     = \hbar^{-1} \begin{pmatrix}
          Y_{n-1} & 0 & X_{n-2} \ldots X_1 A_1 \\ 0 & \text{diag}(\hbar^{2-n} a_{1,1},\ldots,\hbar^{-1} a_{1,1}) & 0
     \end{pmatrix}  g_n^{-1} \\
     =\begin{pmatrix}
           \hbar^{-1} Y_{n-1} & 0 &  X_{n-2} \ldots X_{1} A_1 a_{1,1}^{-1}\\ 0 & \mathbb{I}_{n-2} & 0
     \end{pmatrix}
\end{multline*}
Continuing inductively, we see that (\ref{toruseq}) holds.

\end{proof}

\begin{Proposition}
The largest subtorus of $\bT'$ that preserves $\Phi(\mathcal{M})$ is $\iota(\bT)$.
\end{Proposition}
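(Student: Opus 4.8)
The plan is to combine the torus-equivariance already established in Proposition \ref{toruseqprop} with a rigidity analysis modeled on the injectivity argument of Proposition \ref{injective}. Write $S \subseteq \bT'$ for the largest subtorus preserving $\Phi(\mathcal{M})$. Proposition \ref{toruseqprop} gives $\iota(\bT) \subseteq S$, so only the reverse inclusion requires work. First I would record that, in the coordinates $(b,c,\hbar)$ on $\bT'$, the image $\iota(\bT)$ is cut out precisely by the $n-1$ relations $c_{j+1} = \hbar c_j$ for $1 \le j \le n-1$ (equivalently $c_j = \hbar^{j-n} c_n$), since by (\ref{torimap}) the map $\iota$ carries the $b$-coordinates and $\hbar$ isomorphically and leaves $c_n$ free. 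Thus it suffices to prove that every $t = (b,c,\hbar) \in S$ satisfies these relations.

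To that end, fix such a $t$ and a point $p = (X,Y,I,J)$ representing a class of $\mathcal{M}$; since $t$ preserves $\Phi(\mathcal{M})$, there is $g = (g_i) \in G_{\dv'}$ with $g \cdot (t \cdot \Phi(p)) = \Phi(p')$ for some $p' = (\tilde X, \tilde Y, \tilde I, \tilde J)$ representing a class of $\mathcal{M}$. Writing each $g_i$ in the block form of Proposition \ref{injective}, the same computation with the framing maps $I'_n$ and $J'_n$ shows that the off-diagonal blocks $g_{i,2}, g_{i,3}$ vanish, exactly as in that proof; the only difference is that $t$ now rescales the framings, so that matching the $\mathbb{I}_{n-1}$ block of $I'_n$ forces $g_{n,4} = \text{diag}(c_1,\ldots,c_{n-1})$ rather than the identity. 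I would then propagate this downward in two independent ways. Since $t$ acts trivially on the maps $X'_k$, matching the $-\mathbb{I}_{k-1}$ blocks in $g_{k+1} X'_k g_k^{-1} = \tilde X'_k$ yields $g_{k,4} = \text{diag}(c_{n-k+1},\ldots,c_{n-1})$ for all $k$. On the other hand, $t$ rescales each $Y'_k$ by $\hbar^{-1}$, so matching the $\mathbb{I}_{k-1}$ blocks of $Y'_k$ gives the alternative expression $g_{k,4} = \text{diag}(\hbar c_{n-k},\ldots,\hbar c_{n-2})$. Comparing the two entry by entry produces the relations $c_{j+1} = \hbar c_j$ for $1 \le j \le n-2$.

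The remaining relation $c_n = \hbar c_{n-1}$ is not visible from the $\mathbb{I}$-blocks, because $c_n$ is the weight of the single ``extra'' framing direction that enters $I'_n$ and $J'_n$ through the column $X_{n-1}\cdots X_1 A_1$ and the covector $B_1 Y_1 \cdots Y_{n-1}$ rather than through an identity block. To extract it, I would track the column $X_{n-1}\cdots X_1 A_1$: matching the last column of $I'_n$ gives $g_{n,1}(X_{n-1}\cdots X_1 A_1) = c_n\,\tilde X_{n-1}\cdots \tilde X_1 \tilde A_1$, while the top-left blocks of the $X'_k$ equations give $\tilde X_i = g_{i+1,1} X_i g_{i,1}^{-1}$, and the last column of the $Y'_1$ equation gives $g_{1,1} A_1 = \hbar c_{n-1} \tilde A_1$. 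Telescoping the second set of identities turns $\tilde X_{n-1}\cdots\tilde X_1$ into $g_{n,1}(X_{n-1}\cdots X_1)g_{1,1}^{-1}$, and substituting together with the third identity rewrites the right-hand side of the first as $c_n(\hbar c_{n-1})^{-1} g_{n,1}(X_{n-1}\cdots X_1 A_1)$; cancelling the common factor forces $c_n = \hbar c_{n-1}$. Assembling all relations gives $t \in \iota(\bT)$, hence $S = \iota(\bT)$.

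I anticipate the main obstacle to be this last step: the cancellation forcing $c_n = \hbar c_{n-1}$ requires that the composite $X_{n-1}\cdots X_1 A_1$ (or, failing that, the covector $B_1 Y_1\cdots Y_{n-1}$ read off from $J'_n$) be nonzero, which need not hold at an arbitrary point. I would therefore argue that $\mathcal{M}$ is nonempty and contains a point at which one of these composites is nonzero, using the freedom to choose $p$ generically, and then treat or rule out the degenerate configurations in which both vanish identically on $\mathcal{M}$. This is the only place where genericity, rather than pure linear algebra on the block forms, enters the argument.
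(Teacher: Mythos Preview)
Your propagation through the $X'_k$ and $Y'_k$ identity blocks is correct and does recover the relations $c_{j+1}=\hbar c_j$ for $1\le j\le n-2$, but the detour is unnecessary and the final step has a genuine gap. You extract $g_{n,4}=\mathrm{diag}(c_1,\ldots,c_{n-1})$ from the $\mathbb{I}_{n-1}$ block of $I'_n$, but you use $J'_n$ only to kill $g_{n,3}$ and overlook that $J'_n$ \emph{also} carries an identity block: its last $n-1$ rows are (up to sign) $(0\mid\mathbb{I}_{n-1})$. Matching that block---now scaled by $\hbar^{-1}\mathrm{diag}(c_2,\ldots,c_n)$ under the torus action---forces the second equation $g_{n,4}=\hbar^{-1}\mathrm{diag}(c_2,\ldots,c_n)$. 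Equating the two expressions for $g_{n,4}$ yields $c_j=\hbar^{-1}c_{j+1}$ for every $j=1,\ldots,n-1$ at once, including the relation $c_n=\hbar c_{n-1}$ that you are missing. This is exactly what the paper does, and it never leaves the last node, so no propagation and no genericity are needed.

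Your proposed rescue via the column $X_{n-1}\cdots X_1 A_1$ (or the row $B_1 Y_1\cdots Y_{n-1}$) is not just delicate but can actually fail. If some intermediate $\dv_i=0$ for $1<i<n$, then $X_{n-1}\cdots X_1$ and $Y_1\cdots Y_{n-1}$ both factor through the zero space, so both composites vanish identically on all of $\mathcal{M}$ even when $\mathcal{M}$ is nonempty; there is then no ``generic'' point at which your cancellation argument applies. The proposition nonetheless holds in these cases precisely because the identity blocks in $I'_n$ and $J'_n$ are present regardless of the values of $X,Y,A_1,B_1$. So the fix is not to sharpen the genericity argument but to drop it and read off the second expression for $g_{n,4}$ directly from $J'_n$.
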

\begin{proof}
The fact that $\iota(\bT)$ preserves $\mathcal{M}$ follows by Proposition \ref{toruseqprop}. So we only need to show that there is no larger subtorus that does.

Let $(b,c,\hbar) \in \bT'$ and suppose that $\Phi(\mathcal{M})$ is fixed by $(b,c,\hbar)$. Let $(X,Y,I,J)\in T^{*}\text{Rep}(\dv,\dw)$ be a representative of a point $p\in \mathcal{M}$. If $(b,c,\hbar) \cdot \Phi(p)=\Phi(p')  \in \Phi(\mathcal{M})$, then there exists a representative $(X',Y',I',J')$ of $p'$ so that 
\[
(b,c,\hbar) \cdot \Phi(X,Y,I,J) = g \cdot \Phi(X',Y',I',J')
\]
for some $g \in \prod_{i \in Q_{0}} GL(V'_i)$. Writing $g=(g_i)_{i \in Q_{0}}$ and 
\[
g_i = \begin{pmatrix}
g_{i,1} & g_{i,2} \\ g_{i,3} & g_{i,4}
\end{pmatrix}
\]
it follows by the same reasoning as in the proof of Proposition \ref{injective} that $g_{i,2}=0$ and $g_{i,3}=0$. And we have
\begin{multline*}
\begin{pmatrix}
I_n \text{diag}(b_{n,1},\ldots,b_{n,\dw_n})^{-1} & 0 & X_{n-1} \ldots X_{1} A_{1} c_n^{-1} \\
0 & \text{diag}(c_1,\ldots,c_{n-1})^{-1} & 0
\end{pmatrix} \\
=\begin{pmatrix}
g_{n,1} I_n' & 0 & g_{n,1} X_{n-1}' \ldots X_1' A_1' \\
0 & g_{n,4} \mathbb{I}_{n-1} & 0
\end{pmatrix}
\end{multline*}
Also
\begin{multline*}
\hbar^{-1} \cdot
\begin{pmatrix}
\text{diag}(b_{n,1},\ldots,b_{n,\dw_n}) J_n  & 0  \\
c_1 B_1 Y_1 \ldots Y_{n-1} & c_1 C_n \\
0 & -\text{diag}(c_2,\ldots,c_n)
\end{pmatrix} \\
=   \begin{pmatrix}
 J_n' (g_{n,1})^{-1}  & 0  \\
 B_1' Y_1' \ldots Y_{n-1}' g_{n,1}^{-1} & c_1 C_n' g_{n,4}^{-1} \\
0 & -g_{n,4}^{-1}
\end{pmatrix}
\end{multline*}
These two equations imply that
\[
\text{diag}(c_1,\ldots,c_{n-1})^{-1} = g_{n,4}= \hbar \cdot \text{diag}(c_2,\ldots,c_n)^{-1}
\]
Hence
\[
c_j=\hbar^{-1} c_{j+1} \implies c_j = \hbar^{j-n} c_n
\]
which implies that $(b,c,\hbar)$ is in $\iota(\bT)$.

\end{proof}

We also have
\begin{Corollary}
The embedding $\Phi$ maps the $\bT$-fixed locus to the $\iota(\bT)$-fixed locus.
\end{Corollary}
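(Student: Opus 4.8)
The plan is to deduce this immediately from the equivariance of $\Phi$ proved in Proposition \ref{toruseqprop}. That proposition supplies, for every $(a,\hbar)\in\bT$ and every $p\in\mathcal{M}$, the identity
\[
\iota(a,\hbar)\cdot\Phi(p)=\Phi\big((a,\hbar)\cdot p\big)
\]
as points of $\mathcal{M}'$. Everything else is a formal consequence of this.

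First I would fix a point $p$ in the $\bT$-fixed locus, so that $(a,\hbar)\cdot p=p$ for all $(a,\hbar)\in\bT$. Substituting into the equivariance identity gives
\[
\iota(a,\hbar)\cdot\Phi(p)=\Phi\big((a,\hbar)\cdot p\big)=\Phi(p)
\]
for every $(a,\hbar)\in\bT$.

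Next, since $\iota(\bT)$ is by definition the image of $\iota$, every element of $\iota(\bT)$ is of the form $\iota(a,\hbar)$ for some $(a,\hbar)\in\bT$. The displayed equation therefore asserts precisely that $\Phi(p)$ is fixed by all of $\iota(\bT)$, i.e.\ that $\Phi(p)$ lies in the $\iota(\bT)$-fixed locus. As $p$ was an arbitrary $\bT$-fixed point, this proves the claim.

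The argument has no real obstacle once Proposition \ref{toruseqprop} is available; the only point worth stating carefully is the elementary one that being fixed by every $\iota(a,\hbar)$ is the same as being fixed by the whole subtorus $\iota(\bT)$, which holds exactly because $\iota(\bT)$ is the image of $\iota$. Note also that the injectivity of $\Phi$ from Proposition \ref{injective} is not needed here, since the statement asserts only a containment of fixed loci rather than a bijection between them.
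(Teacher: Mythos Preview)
Your argument is correct and is exactly the standard deduction from equivariance. The paper itself offers no proof for this corollary, treating it as an immediate consequence; your use of Proposition~\ref{toruseqprop} to obtain $\iota(a,\hbar)\cdot\Phi(p)=\Phi((a,\hbar)\cdot p)=\Phi(p)$ for $\bT$-fixed $p$ is precisely the intended one-line justification.
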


\section{The embedding: general case}\label{embeddings}

\subsection{One step}\label{onestepsection}
Now we apply to local construction from the previous section to the case of a general type $A$ quiver variety. So choose a natural number $m\geq 2$ and consider the $A_{m}$ quiver, i.e. the quiver with vertices $Q_{0}=\{1,2,\ldots,m\}$ and edges $i\to i+1$ for $1 \leq i \leq m-1$. Let $\dv,\dw \in \mathbb{Z}_{\geq 0}^{Q_{0}}$. Fix a stability parameter $\theta \in \mathbb{Z}^{Q_{0}}$. We will suppress $\theta$ from the notation. Consider the corresponding quiver variety $\mathcal{M}(\dv,\dw)$. Let $k< m$ be the maximal integer such that $\dw_{k+1}\neq 0$. We assume that such a $k$ exists. Let $n=m-k$. See Figure \ref{typeAquiver}.

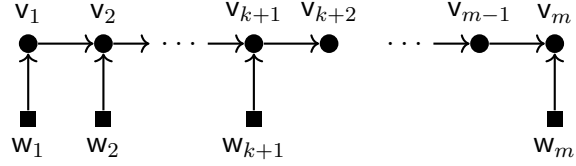
\begin{figure}[ht]
\centering
\begin{tikzpicture}[roundnode/.style={circle,fill,inner sep=2.5pt},refnode/.style={circle,inner sep=0pt},squarednode/.style={rectangle,fill,inner sep=3pt}]
\begin{scope}[shift={(0,0)}]
\node[squarednode,label=below:{$\dw_{1}$}](B0) at (-3,-1){};
\node[squarednode,label=below:{$\dw_{2}$}](B1) at (-2,-1){};
\node[squarednode,label=below:{$\dw_{k+1}$}](F0) at (0,-1){};
\node[squarednode, label=below:{$\dw_{m}$}](F2) at (4,-1){};

\node[roundnode, label=above:{$\dv_{1}$}] (A0) at (-3,0){};
\node[roundnode, label=above:{$\dv_{2}$}] (A1) at (-2,0){};
\node (L) at (-1,0){$\ldots$};
\node[roundnode, label=above:{$\dv_{k+1}$}] (V0) at (0,0){};
\node[roundnode, label=above:{$\dv_{k+2}$}](V1) at (1,0){};
\node (V2) at (2,0){$\ldots$};
\node[roundnode, label=above:{$\dv_{m-1}$}](V3) at (3,0){};
\node[roundnode, label=above:{$\dv_{m}$}](V4) at (4,0){};
\draw[thick, ->] (B0) -- (A0);
\draw[thick, ->] (B1) -- (A1);
\draw[thick, ->] (A0) -- (A1);
\draw[thick, ->] (A1) -- (L);

\draw[thick, ->] (F0) -- (V0);
\draw[thick, ->] (F2) -- (V4);
\draw[thick, ->] (L) -- (V0);
\draw[thick, ->] (V0) -- (V1);
\draw[thick, ->] (V2) -- (V3);
\draw[thick, ->] (V3) -- (V4);
\end{scope}
\end{tikzpicture}
\label{typeAquiver}
\caption{The framed quiver data for a type $A$ quiver variety. We omit drawing the framed vertices if the corresponding framing dimension is $0$.}
\end{figure}

We apply (\ref{localphi}) to the data arising from the full subquiver with vertices $\{k+1,k+2,\ldots,k+n\}$ to deduce the following.

\begin{Theorem}\label{embedthm}
   There is a torus equivariant embedding 
\begin{align}\label{onestep}
\mathcal{M}(\dv,\dw) \hookrightarrow \mathcal{M}(\dv',\dw')
\end{align}
where 
\begin{align*}
    \dv'_{i}&=\begin{cases}
        \dv_{i} & 1 \leq i \leq k \\
        \dv_{i}+i-k-1 & k+1 \leq i \leq m
    \end{cases} \\
    \dw'_{i}&=\begin{cases}
         \dw_{k+1}-1 & i=k+1 \\
        \dw_{m}+m-k & i=m \\
         \dw_{i} & \text{otherwise}
    \end{cases}
\end{align*}
\end{Theorem}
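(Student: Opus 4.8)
The plan is to realize the general embedding as the local embedding of section \ref{seclocal} applied only to the rightmost portion of the quiver, leaving everything to the left of vertex $k+1$ untouched. By the maximality of $k$, the full subquiver on the vertices $\{k+1,k+2,\ldots,m\}$ is a type $A_{n}$ quiver with $n=m-k$ whose framing dimensions are $\dw_{k+1}\neq 0$ at its first vertex, $0$ at its interior vertices, and $\dw_{m}$ (possibly zero) at its last vertex — exactly the configuration treated in section \ref{seclocal}. I would therefore define $\Phi$ on $T^{*}\text{Rep}_{Q}(\dv,\dw)$ by applying the formulas of section \ref{localconstruction} to the data $(X_{i},Y_{i})_{k+1\le i\le m-1}$ together with $(I_{k+1},J_{k+1},I_{m},J_{m})$, singling out the first column $A_{1}$ of $I_{k+1}$ and first row $B_{1}$ of $J_{k+1}$, while setting $X'_{i}=X_{i}$, $Y'_{i}=Y_{i}$, $I'_{i}=I_{i}$, $J'_{i}=J_{i}$ for every $i\le k$ and keeping the edge maps $X_{k},Y_{k}$ unchanged. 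Under the relabeling $i\mapsto i-k$ the shift $\dv'_{i}=\dv_{i}+i-k-1$ agrees with the local shift $\dv'_{j}=\dv_{j}+j-1$, and crucially $\dv'_{k+1}=\dv_{k+1}$, so $V'_{k+1}=V_{k+1}$ and the grafting is consistent: $X_{k},Y_{k}$ still map between the unaltered spaces $V_{k}$ and $V_{k+1}$.

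With $\Phi$ so defined, I would re-run the four verifications of section \ref{seclocal} in this enlarged setting, exploiting that the moment map identities are local to each vertex. At vertices $1,\ldots,k$ all data is unchanged, so $\mu'_{i}=\mu_{i}=0$. At the interior subquiver vertices $k+2\le i\le m-1$ and at the last vertex $m$, no data outside the subquiver feeds in, so the global moment map coincides with the subquiver moment map and the computations of the first Proposition of section \ref{seclocal} apply verbatim (using that $(X,Y,I,J)\in\mu_{\dv,\dw}^{-1}(0)$ supplies exactly the vanishing those computations invoke). The only vertex requiring care is the junction $k+1$: the local first-vertex computation yields the \emph{unconditional} identity $-Y'_{k+1}X'_{k+1}+I'_{k+1}J'_{k+1}=-Y_{k+1}X_{k+1}+I_{k+1}J_{k+1}$, and since $X'_{k}Y'_{k}=X_{k}Y_{k}$ is untouched, one gets
\[
\mu'_{k+1}=X_{k}Y_{k}-Y_{k+1}X_{k+1}+I_{k+1}J_{k+1}=\mu_{k+1}=0,
\]
the last equality being the original moment map equation. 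Thus the extra incoming term $X_{k}Y_{k}$ simply passes through unchanged.

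For semistability I would adapt Proposition \ref{preservestability}. Given an $(X',Y')$-invariant collection $\{S_{i}\}$ with $S_{i}\subset\ker J'_{i}$, the downward induction of section \ref{seclocal} (from vertex $m$) forces $S_{i}\subset V_{i}$ for $k+1\le i\le m$, while $S_{i}\subset V_{i}$ holds trivially for $i\le k$ since those spaces are undecorated; invariance under $X'_{k+1}$ together with $S_{k+1}\subset\ker J'_{k+1}$ then gives $S_{k+1}\subset\ker J_{k+1}$ exactly as locally, so the restricted family destabilizes $(X,Y,I,J)$ with the same total $\theta$-dimension. The parallel argument with $T_{i}$ and $T'_{i}=T_{i}\cap V_{i}$ reproduces the dimension bound, the extra contribution $\sum_{i\ge k+1}\theta_{i}(i-k-1)$ accounting precisely for $\sum_{i}\theta_{i}(\dim V'_{i}-\dim V_{i})$. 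Finally, $\rho$-equivariance, injectivity, and equivariance for the torus inclusion $\iota$ follow from Propositions \ref{equivariant}, \ref{injective}, and \ref{toruseqprop}: in each case the group elements and torus weights act nontrivially only on the subquiver factors (with trivial $\mathbb{C}^{0}$ decoration at vertex $k+1$), so the local computations carry over once we record that the weights of the $n$ new framings at vertex $m$ are $\hbar^{\,j-n}a_{k+1,1}$ for $j\in\{1,\ldots,n\}$ and that the framing weights at vertices $1,\ldots,k$ are unchanged.

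I expect the main obstacle to be organizational rather than conceptual: maintaining the relabeling $i\mapsto i-k$ consistently and confirming that grafting the local construction onto the fixed left part of the quiver does not disturb the junction vertex $k+1$. The structural fact making this painless is $\dv'_{k+1}=\dv_{k+1}$, guaranteeing $V'_{k+1}=V_{k+1}$, so that the maps $X_{k},Y_{k}$ and all data at vertices $\le k$ never interact with the newly introduced coordinates $\mathbb{C}^{\,i-k-1}$. Once this is in place, every step of the proof reduces to the corresponding statement of section \ref{seclocal}.
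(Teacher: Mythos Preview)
Your proposal is correct and follows exactly the approach the paper takes: apply the local construction of section \ref{seclocal} to the full subquiver on vertices $\{k+1,\ldots,m\}$, leaving the data at vertices $\le k$ untouched. The paper's own proof is a single sentence to this effect, whereas you have helpfully spelled out the one nontrivial point left implicit there, namely that the moment map and semistability checks at the junction vertex $k+1$ go through because $\dv'_{k+1}=\dv_{k+1}$ and the local first-vertex identity $-Y'_{k+1}X'_{k+1}+I'_{k+1}J'_{k+1}=-Y_{k+1}X_{k+1}+I_{k+1}J_{k+1}$ holds unconditionally.
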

We view this as a trading a framing at vertex $k+1$ for many framings at the last vertex. 

\subsection{Repeated embeddings}\label{manystepsection}
Repeating the procedure of the previous section $\dw_{k+1}$ times, we obtain an embedding 
$$
\mathcal{M}(\dv,\dw) \hookrightarrow \mathcal{M}'
$$
where $\mathcal{M}'$ is a quiver variety with $\dw_{k+1}=0$. Continuing inductively, we obtain an embedding of $\mathcal{M}(\dv,\dw)$ into a type $A$ quiver variety with $\dw_{i}=0$ for $i \neq m$.

\begin{Theorem}\label{globalphi}
    Any type $A_{m}$ Nakajima quiver variety can be equivariantly embedded into an $A_{m}$ quiver variety with all framings at the rightmost vertex.
\end{Theorem}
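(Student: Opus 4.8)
The plan is to prove the statement by induction, using the one-step embedding of Theorem \ref{embedthm} as the inductive mechanism. The natural quantity to induct on is the total framing dimension supported away from the rightmost vertex,
$$N(\dw) = \sum_{i=1}^{m-1} \dw_i.$$
If $N(\dw) = 0$ then every framing already sits at vertex $m$ and the identity embedding does the job; this is the base case. (For $m = 1$ there is nothing to prove.)

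For the inductive step I would assume $N(\dw) > 0$, so that some $\dw_i \neq 0$ with $i < m$ and hence the maximal integer $k$ with $\dw_{k+1} \neq 0$ and $k+1 \leq m-1$ exists. Theorem \ref{embedthm} then furnishes a torus-equivariant embedding $\mathcal{M}(\dv,\dw) \hookrightarrow \mathcal{M}(\dv',\dw')$. Reading off the formula for $\dw'$, the only change among the framings at vertices $1,\ldots,m-1$ is $\dw'_{k+1} = \dw_{k+1} - 1$; the increase $\dw'_m = \dw_m + m - k$ occurs at the last vertex, which is not counted by $N$, and this split into distinct cases is consistent precisely because $k+1 \leq m-1 < m$. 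Hence $N(\dw') = N(\dw) - 1$, the inductive hypothesis applies to $\mathcal{M}(\dv',\dw')$, and we obtain a torus-equivariant embedding of it into a quiver variety with all framings at vertex $m$. Composing with $\Phi$, and composing the accompanying torus inclusions $\iota$ from Proposition \ref{toruseqprop}, yields the desired embedding of $\mathcal{M}(\dv,\dw)$. The monovariant $N$ guarantees that the induction terminates after exactly $N(\dw)$ applications of Theorem \ref{embedthm}.

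The one point that genuinely requires checking, and the place I would focus attention, is that the hypotheses of the local construction of section \ref{seclocal} are preserved at every stage. That construction is applied to the subquiver on $\{k+1,\ldots,m\}$ and demands that this subquiver carry framings only at its two endpoints, i.e. $\dw_i = 0$ for $k+2 \leq i \leq m-1$. At the start this holds by maximality of $k$. After one step, $\dw'_{k+1} = \dw_{k+1} - 1$, the interior framings $\dw'_{k+2},\ldots,\dw'_{m-1}$ are still zero, and $\dw'_m$ (an endpoint) merely grows; so if $\dw'_{k+1} \neq 0$ the same $k$ remains maximal and the hypothesis persists, while if $\dw'_{k+1} = 0$ the next relevant integer $k'$ satisfies $k' < k$ and the interior $\{k'+2,\ldots,m-1\}$ of the new subquiver consists of vertices that are either zero by maximality of $k'$ or were cleared in previous steps. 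The simultaneous growth of $\dv$ is irrelevant to this bookkeeping. Beyond this verification I anticipate no real obstacle: once Theorem \ref{embedthm} is in hand, the argument is a clean finite induction controlled by $N(\dw)$, and the torus equivariance of the composite follows formally from that of each factor.
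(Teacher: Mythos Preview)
Your proposal is correct and follows essentially the same route as the paper: iterate Theorem \ref{embedthm} until all framings away from vertex $m$ are exhausted, which the paper phrases as ``repeating the procedure $\dw_{k+1}$ times'' and ``continuing inductively.'' Your explicit monovariant $N(\dw)=\sum_{i<m}\dw_i$ and the check that the local hypothesis $\dw_i=0$ for $k+2\le i\le m-1$ persists are a clean formalization of what the paper leaves implicit; note that this hypothesis is in fact automatic from the maximality of $k$ at each stage, so the verification, while correct, is not strictly needed.
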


Let $\theta^{\pm}=\pm(1,1,\ldots,1)$. In the special case where $\theta=\theta^{\pm} $, we obtain the following.
\begin{Corollary}
   Any type $A_{m}$ Nakajima quiver variety with stability condition $\theta^{+}$ or $\theta^{-}$ can be equivariantly embedded into the cotangent bundle of an $m$-step partial flag variety.
\end{Corollary}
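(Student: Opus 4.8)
The plan is to deduce this corollary directly from Theorem \ref{globalphi} together with the classical identification, recalled in the introduction following \cite{NakALE}, of Nakajima varieties with cotangent bundles of partial flag varieties when the framing is concentrated at the rightmost vertex. First I would apply Theorem \ref{globalphi} to the given variety $\mathcal{M}_{\theta^{\pm}}(\dv,\dw)$, producing a $\bT$-equivariant embedding
$$
\mathcal{M}_{\theta^{\pm}}(\dv,\dw) \hookrightarrow \mathcal{M}_{\theta^{\pm}}(\dv',\dw')
$$
into an $A_{m}$ quiver variety whose framing vector $\dw'$ is supported only at the last vertex, say $\dw'=(0,\ldots,0,N)$ with $N=\dw'_{m}$. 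The essential point to record here is that the stability parameter is left unchanged by the one-step construction of Theorem \ref{embedthm}, and hence by its iteration in Theorem \ref{globalphi}; therefore the target still carries the stability condition $\theta^{\pm}$.

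Next I would invoke the identification cited in the introduction: for framing $\dw'=(0,\ldots,0,N)$ and stability $\theta=\theta^{\pm}$, the variety $\mathcal{M}_{\theta^{\pm}}(\dv',\dw')$ is isomorphic to the cotangent bundle $T^{*}\mathrm{Fl}(\dv'_{1},\ldots,\dv'_{m};N)$ of the partial flag variety parametrizing flags $0\subset U_{1}\subset\cdots\subset U_{m}\subset\mathbb{C}^{N}$ with $\dim U_{i}=\dv'_{i}$. Since the $A_{m}$ quiver has exactly $m$ vertices, the associated flag has $m$ steps, which matches the statement. Composing the embedding above with this isomorphism yields the desired $\bT$-equivariant embedding into the cotangent bundle of an $m$-step partial flag variety.

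The only genuinely delicate point is nonemptiness, since the identification with a flag variety is valid precisely when $\dv'_{1}\leq\dv'_{2}\leq\cdots\leq\dv'_{m}\leq N$. I would dispose of this without any separate computation: if $\mathcal{M}_{\theta^{\pm}}(\dv,\dw)$ is empty the corollary is vacuous, whereas if it is nonempty then the injectivity established in Proposition \ref{injective} (and preserved under composition) forces $\mathcal{M}_{\theta^{\pm}}(\dv',\dw')$ to be nonempty as well. By the criterion recalled from \cite{NakALE}, nonemptiness of the target is exactly the chain of inequalities above, so the flag-variety identification applies automatically. In short, the corollary is essentially immediate once Theorem \ref{globalphi} is in hand; the main (and really the sole) task is the bookkeeping that matches the combinatorial data $\dv'$ output by the iterated construction with the dimensions of the successive subspaces in the flag, which is transparent because the target quiver variety is determined entirely by the pair $(\dv',\dw')$.
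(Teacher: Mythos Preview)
Your proposal is correct and follows essentially the same approach as the paper: apply Theorem \ref{globalphi} to land in a quiver variety with framing concentrated at the last vertex, then invoke the classical identification of such varieties (for $\theta=\theta^{\pm}$) with cotangent bundles of partial flag varieties. The paper does not write out a proof at all, treating the corollary as immediate from Theorem \ref{globalphi} and the remarks in the introduction; your added discussion of nonemptiness via Proposition \ref{injective} is a reasonable elaboration of the same argument.
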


\subsection{Combinatorics of torus fixed points}
Now we specialize to the case $\theta=\theta^{-}$. Fixed points on the Nakajima variety $\mathcal{M}(\dv,\dw)$ are indexed by certain $|\dw|$-tuples of partitions, see for example Proposition 4 in \cite{dinkinselliptic}.

Let $\lambda=(\lambda_1\geq \lambda_{2} \geq \ldots)$ be a partition. The length of $\lambda$ is denoted by $l(\lambda)$, and it will be useful to use the convention that $\lambda_{i}=0$ for $i > l(\lambda)$. The Young diagram of $\lambda$ is the collection of points in the plane given by $\{(i,j) \, \mid \, 1 \leq i \leq l(\lambda), 1\leq j \leq \lambda_{i}\}$. We will also refer to these points as ``boxes" in the Young diagram. If $\square=(i,j)$ is such a box, we write $\square \in \lambda$. Given $\square=(i,j) \in \lambda$, the content and height of $\square$ are defined by $\text{cont}_{\lambda}(\square)=i-j$ and $\text{ht}_{\lambda}(\square)=i+j-2$.

\begin{Definition}\label{vwpart}
A $(\mathsf{v},\mathsf{w})$-tuple of partitions is a $(\mathsf{w}_1+\ldots +\mathsf{w}_{m})$-tuple of partitions 
$$
\llambda=\left(\lambda^{i,j}\right)_{\substack{1 \leq i \leq m \\ 1 \leq j \leq \mathsf{w}_i}}
$$
such that the set
$$
S_{l}:=\bigcup_{\substack{1 \leq i \leq m \\ 1 \leq j \leq \mathsf{w}_i}}\{\square \in \lambda^{i,j} \, \mid \, \text{cont}_{\lambda^{i,j}}(\square)+i=l \}
$$
has size $\mathsf{v}_l$ for $l=1,\ldots m$.
\end{Definition}

\begin{Definition}
    Let $\square \in \llambda$; in particular, $\square \in \lambda^{i,j}$ for some $i$ and $j$. We associate an equivariant parameter of the torus $\bT$ from section \ref{torussection} to $\square$ by 
    $$
a_{\square}=a_{i,j}
    $$
\end{Definition}

\begin{Proposition}[\cite{dinkinselliptic} Proposition 4]
    $\bT$ fixed points on $\mathcal{M}(\dv,\dw)$ are in natural bijection with $(\dv,\dw)$-tuples of partitions.
\end{Proposition}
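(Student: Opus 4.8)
The plan is to classify the $\bT$-fixed points directly and match them with $(\dv,\dw)$-tuples of partitions, exploiting the very restrictive form of $\theta^{-}$-stability. First I would unwind the stability criterion. For $\theta=\theta^{-}=-(1,\ldots,1)$, condition 2 of Proposition \ref{ss} says that any collection $\{T_i\}$ of $X,Y$-invariant subspaces with $T_i \supseteq \Ima I_i$ satisfies $\sum_i \dim T_i \ge \sum_i \dim V_i$, which forces $T_i=V_i$ for all $i$. Hence $\theta^{-}$-stability is equivalent to the condition that $V=\bigoplus_i V_i$ is generated from $\bigoplus_i \Ima I_i$ by repeated application of $X$ and $Y$. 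In particular the $G_{\dv}$-stabilizer of a stable quadruple is trivial: any $g$ fixing $(X,Y,I,J)$ acts as the identity on $\bigoplus_i\Ima I_i$ and commutes with $X,Y$, so its fixed subspace is an $X,Y$-invariant space containing $\Ima I$, hence all of $V$, whence $g=\mathrm{id}$.

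Next I would convert a $\bT$-fixed point into linear-algebraic data. A point $[X,Y,I,J]$ is $\bT$-fixed iff its $G_{\dv}$-orbit is $\bT$-invariant, i.e. for each $t\in\bT$ there is $g(t)\in G_{\dv}$ with $t\cdot(X,Y,I,J)=g(t)\cdot(X,Y,I,J)$. By the triviality of the stabilizer just noted, $g(t)$ is unique and $t\mapsto g(t)$ is a homomorphism, so each $V_i$ acquires a $\bT$-module structure. Decomposing $V_i=\bigoplus_{\chi}V_i[\chi]$ into $\bT$-weight spaces, and recalling that $\bA$ acts on $W_i=\bigoplus_{j=1}^{\dw_i}\mathbb{C}_{a_{i,j}}$ while $\mathbb{C}^{\times}_{\hbar}$ scales the cotangent directions $Y,J$, the maps become graded: $I_i$ and $X_i$ preserve $\bT$-weight, while $Y_i$ and $J_i$ raise it by $\hbar$. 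Thus each basis vector of $V$ is reached from some framing generator $I_i(e_j)$ (of weight $a_{i,j}$, sitting at vertex $i$) by a word in $X$ (vertex $+1$, weight unchanged) and $Y$ (vertex $-1$, weight $\times\hbar$).

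The combinatorial heart is to show that these reachable weights organize into partitions. Because the framing weights $a_{i,j}$ may be taken generic, weight vectors produced from distinct generators are independent and do not interact, so $V$ splits as a direct sum of the subrepresentations generated by the individual $I_i(e_j)$. Within a single generator I would combine the moment-map relations $X_{l-1}Y_{l-1}-Y_lX_l+I_lJ_l=0$ with the grading to show that $J$ vanishes at the fixed point and that the set of reached weights (each of the form $a_{i,j}\hbar^{p}$) is exactly the box set of a Young diagram $\lambda^{i,j}$, with the box $(r,c)$ corresponding to a vector in $V_{i+r-c}$ of weight $a_{i,j}\hbar^{c-1}$ and with $X,Y$ acting as the evident add-a-box maps. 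The content then records the vertex, which is why a box of $\lambda^{i,j}$ lands at vertex $i+\mathrm{cont}(\square)$, and the requirement $\dim V_l=\dv_l$ becomes precisely $|S_l|=\dv_l$, i.e. the defining condition of a $(\dv,\dw)$-tuple in Definition \ref{vwpart}.

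Finally I would construct the inverse: given such a tuple, define $V_l$ to have a basis indexed by the boxes assigned to vertex $l$, let $I$ hit the corner boxes, set $J=0$, and let $X,Y$ be the add-a-box maps; one checks directly that $\mu=0$, that the representation is $\theta^{-}$-stable (being generated by $\Ima I$), and that it is $\bT$-fixed, and that the two constructions are mutually inverse. I expect the main obstacle to be the combinatorial step in the previous paragraph: proving that $\mu=0$ together with stability forces the reachable-weight pattern to be a genuine Young diagram, with weakly decreasing rows, rather than a more general skew or disconnected shape, and that there is no mixing between generators. Both points rest on a careful use of the genericity of the $a_{i,j}$ and of the weight bookkeeping, and this is where the real work lies.
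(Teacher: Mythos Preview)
The paper does not actually prove this proposition: it is quoted verbatim from \cite{dinkinselliptic}, Proposition~4, with no argument given here. So there is no ``paper's own proof'' to compare against. Your sketch is the standard argument (trivial stabilizer $\Rightarrow$ $\bT$-grading on $V$, then decompose by framing characters and read off a Young diagram from each generator), and it is correct in outline; the point you flag as the main obstacle---that the reachable weights from a single generator form a genuine Young diagram rather than an arbitrary staircase---is indeed where the work is, and it follows from the moment-map relation $X_{l-1}Y_{l-1}=Y_lX_l$ (away from framed vertices) together with one-dimensionality of the weight spaces.

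One caution on conventions. You assert that $X_i,I_i$ preserve $\bT$-weight while $Y_i,J_i$ scale by $\hbar$, and hence assign the box $(r,c)$ the weight $a_{i,j}\hbar^{c-1}$. That is consistent with the paper's verbal description of $\mathbb{C}^{\times}_{\hbar}$ as ``dilation of the cotangent fibers'' and, in fact, with the box labels drawn in Figure~\ref{fpex0}. However, the displayed formula~(\ref{tbwts}) uses $\hbar^{\delta^{\llambda}_{\square}}$ with $\delta^{\llambda}_{\square}=\text{ht}(\square)=r+c-2$, which would require \emph{both} $X$ and $Y$ to scale by $\hbar$. So be aware that the paper's own conventions are not entirely internally consistent on this point; whichever you adopt, make sure your bijection lands on the same normalization used downstream in the vertex-function computations.
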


Given a $(\dv,\dw)$-tuple of partitions $\llambda$, we will write $\square \in \llambda$ to mean that $\square \in \lambda^{i,j}$ for some $i$ and $j$.

\begin{Definition}
    Let $\llambda$ be a $(\dv,\dw)$-tuple of partitions. Let $\square \in \llambda$, and in particular, suppose that $\square \in \lambda^{i,j}$. Then we define
    $$
    \gamma(\square):=\text{cont}_{\lambda^{i,j}}(\square)+i
    $$
    We also define 
    $$
    \delta^{\llambda}_{\square}=\text{ht}_{\lambda^{i,j}}(\square)
    $$
\end{Definition}

The function $\gamma$ is just a shifted version of the content. The number $\delta^{\llambda}_{\square}$ is just the height; however, we introduce this notation because in section \ref{proof} we will consider subpartitions $\llambda \subset \mmu$ and will need to distinguish between $\delta^{\llambda}_{\square}$ and $\delta^{\mmu}_{\square}$.

The vector spaces $V_{i}$ in the definition of $\mathcal{M}(\dv,\dw)$ give rise to tautological vector bundles $\mathcal{V}_{i}$ over $\mathcal{M}(\dv,\dw)$. It is known that the $\bT$ character of the fiber of these vector bundles over a fixed point $\llambda$ is given by 
\begin{equation}\label{tbwts}
\mathcal{V}_{i}|_{\llambda}=\sum_{\substack{\square \in \llambda \\ \gamma(\square)=i}} a_{\square} \hbar^{\delta^{\llambda}_{\square}}
\end{equation}

See Figure \ref{fpex0} for an example of a torus fixed point.

\begin{figure}[htbp]
    \centering
\begin{tikzpicture}[roundnode/.style={circle,fill,inner sep=2.5pt},refnode/.style={circle,inner sep=0pt},squarednode/.style={rectangle,fill,inner sep=3pt}] 
\node[roundnode,label=above:{2}] (N1) at (-2,0){};
\node[roundnode,label=above:{3}] (N2) at (-1,0){};
\node[roundnode,label=above:{4}] (N3) at (0,0) {};
\node[roundnode,label=above:{4}] (N4) at (1,0){};
\node[roundnode,label=above:{3}] (N5) at (2,0){};
\node[roundnode,label=above:{1}] (N6) at (3,0){};
\node[squarednode,label=below:{$1$}] (F1) at (0,-1){};
\node[squarednode,label=below:{$2$}] (F2) at (1,-1){};

\begin{scope}[shift={(0,0)}]
\draw[thick,-](0,0)--(-3,3)--(-1,5)--(1,3)--(2,4)--(3,3)--(0,0);
\draw[thick,-](1,1)--(-2,4);
\draw[thick,-](2,2)--(1,3);
\draw[thick,-](-1,1)--(1,3);
\draw[thick,-](-2,2)--(0,4);
\node at (0,1){$a_{3,1}$};
\node at (-1,2){$a_{3,1} \hbar$};
\node at (-2,3){$a_{3,1} \hbar^{2}$};
\node at (1,2){$a_{3,1}$};
\node at (0,3){$a_{3,1}\hbar$};
\node at (-1,4){$a_{3,1}\hbar^{2}$};
\node at (2,3){$a_{3,1}$};
\end{scope}

\begin{scope}[shift={(1,3.5)}]
\draw[thick,-](0,0)--(-2,2)--(0,4)--(2,2)--(0,0);
\draw[thick,-](1,1)--(-1,3);
\draw[thick,-](-1,1)--(1,3);
\node at (0,1){$a_{4,1}$};
\node at (-1,2){$a_{4,1}\hbar$};
\node at (1,2){$a_{4,1}$};
\node at (0,3){$a_{4,1}\hbar$};
\end{scope}

\begin{scope}[shift={(1,8)}]
\draw[thick,-](0,0)--(-4,4)--(-3,5)--(0,2)--(2,4)--(3,3)--(0,0);
\draw[thick,-](-1,1)--(0,2);
\draw[thick,-](-2,2)--(-1,3);
\draw[thick,-](-3,3)--(-2,4);
\draw[thick,-](1,1)--(0,2);
\draw[thick,-](2,2)--(1,3);
\node at (0,1){$a_{4,2}$};
\node at (-1,2){$a_{4,2}\hbar$};
\node at (-2,3){$a_{4,2}\hbar^{2}$};
\node at (-3,4){$a_{4,2}\hbar^{3}$};
\node at (1,2){$a_{4,2}$};
\node at (2,3){$a_{4,2}$};
\end{scope}

\draw[thick, ->] (N1) -- (N2);
\draw[thick, ->] (N2) -- (N3);
\draw[thick, ->] (N3) -- (N4);
\draw[thick, ->] (N4) -- (N5);
\draw[thick, ->] (N5) -- (N6);
\draw[thick, ->] (F1) -- (N3);
\draw[thick, ->] (F2) -- (N4);

\draw[dotted, -](0,0)--(0,12.5);
\draw[dotted, -](1,0)--(1,12.5);
\draw[dotted, -](2,0)--(2,12.5);
\draw[dotted, -](3,0)--(3,12.5);
\draw[dotted, -](-1,0)--(-1,12.5);
\draw[dotted, -](-2,0)--(-2,12.5);

\end{tikzpicture}

 \caption{An example of the torus fixed point $\llambda=((3,3,1),(2,2),(4,1,1))$ on the quiver variety $\mathcal{M}((2,3,4,4,3,1),(0,0,1,2,0,0))$. We have filled each box with $a_{\square} \hbar^{\delta^{\llambda}_{\square}}$. The $\bT$ character of the tautological bundle $\mathscr{V}_{3}$ is $a_{3,1}+a_{3,1}\hbar+a_{4,1}\hbar+a_{4,2}\hbar$.}\label{fpex0}
\end{figure}
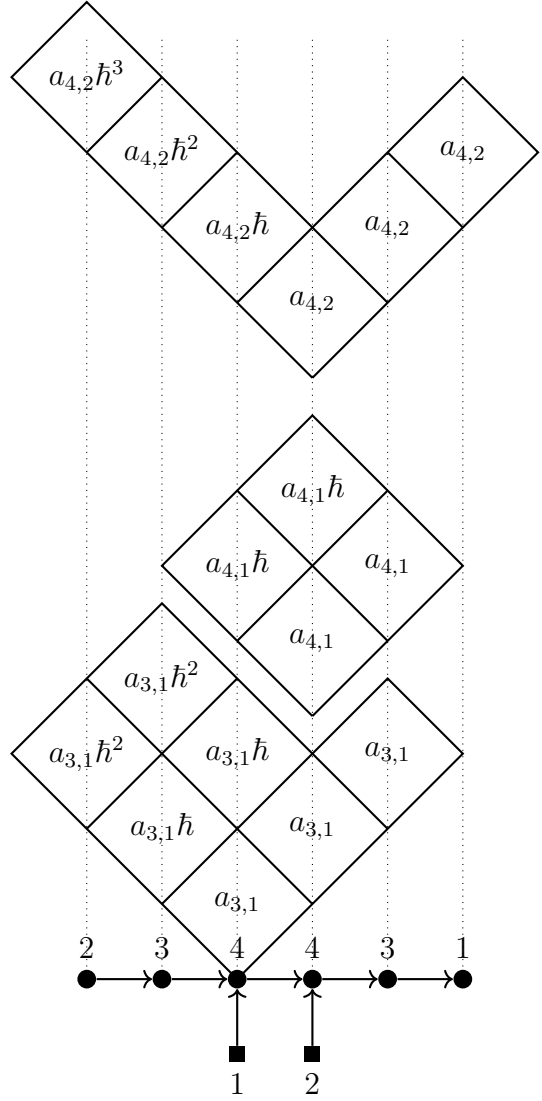

\subsection{Correspondence between torus fixed points}\label{fpmap}
Let us consider the embedding (\ref{onestep}).

Let $\llambda=\left(\lambda^{i,j} \right)_{\substack{1 \leq i \leq m\\ 1 \leq j \leq \dw_{i}}}$ be a $(\dv,\dw)$-tuple of partitions.

\begin{Definition}\label{fpmatch}
Let $\llambda$ be a $(\dv,\dw)$-tuple of partitions. We define another tuple of partitions $\mmu=\left( \mu^{i,j}\right)_{\substack{1 \leq i \leq m \\ 1 \leq j \leq \dw'_{i}}}$ by
\begin{itemize}
    \item $\mu^{i,j}= \lambda^{i,j}$ if $1 \leq i \leq k$.
\item $\mu^{k+1,j}=\lambda^{k+1,j+1}$ if $1 \leq j \leq \dw'_{k+1}=\dw_{k+1}-1$.
\item $\mu^{i,j}=\emptyset$ if $k+1< i <m$.
\item $\mu^{m,j}=\lambda^{m,j}$ for $1\leq j \leq \dw_{m}$.
\item $\mu^{m,\dw_{m}+j}=(\lambda^{k+1,1}_{j}+n-j)$ for $1 \leq j \leq n$.
\end{itemize}
\end{Definition}

 In other words, $\mmu$ is obtained from $\llambda$ by removing the partition corresponding to the first framing at vertex $k+1$ and by adding certain length $1$ partitions at the last vertex depending on the partition that was removed. 

 \begin{Proposition}
     With the notation above, $\mmu$ is a $(\dv',\dw')$-tuple of partitions.
 \end{Proposition}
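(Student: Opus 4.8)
The plan is to reduce the statement to the single nontrivial requirement of Definition \ref{vwpart}: that $|S_l(\mmu)| = \dv'_l$ for $l = 1, \ldots, m$. Before that, two routine points must be checked. First, every $\mu^{i,j}$ is a genuine partition; the only clause that is not immediate is $\mu^{m,\dw_m+j} = (\lambda^{k+1,1}_j + n - j)$, and these single-row partitions are legitimate because $\lambda^{k+1,1}_j \geq 0$ and $n - j \geq 0$ for $1 \leq j \leq n$ (with the value $0$ read as the empty partition). Second, the ranges $1 \leq j \leq \dw'_i$ prescribed by $\dw'$ agree with the clauses of Definition \ref{fpmatch} vertex by vertex; in particular vertex $m$ receives $\dw_m + n = \dw_m + (m-k) = \dw'_m$ partitions.

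The main idea is that $\mmu$ and $\llambda$ share almost all of their data. For $1 \leq i \leq k$, and for the surviving framings at vertices $k+1$ and $m$, the partitions of $\mmu$ are exactly those of $\llambda$ placed at the same vertex; note that the reindexing $\mu^{k+1,j} = \lambda^{k+1,j+1}$ leaves the vertex unchanged and hence does not alter the contribution to any $S_l$. Thus the entire difference between $S_l(\mmu)$ and $S_l(\llambda)$ comes from deleting the single partition $\alpha := \lambda^{k+1,1}$ at vertex $k+1$ and inserting the $n$ new single-row partitions at vertex $m$, so I would isolate the comparison to these two pieces.

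The crux is a $\gamma$-preserving bijection between the boxes of $\alpha$ and a portion of the new rows. Using $m = k+n$, I match the box of $\alpha$ in row $j$, column $c$ (for $1 \leq c \leq \alpha_j$) with the box of the new partition $\mu^{m,\dw_m+j}$ in column $c + (n-j)$, so that the $j$-th new row corresponds to the $j$-th row of $\alpha$. A one-line computation shows the first box has $\gamma = (j - c) + (k+1)$ while the second has $\gamma = (1 - (c + n - j)) + m = (j-c)+(k+1)$, so the shift by $n-j$ is precisely what forces the two $\gamma$-values to coincide; this is the delicate step, and everything else is bookkeeping. This bijection identifies all of $\alpha$ with columns $n-j+1, \ldots, n-j+\alpha_j$ of the new rows, leaving uncovered exactly the ``staircase'' boxes in columns $1, \ldots, n-j$ of the $j$-th new row, that is $\sum_{j=1}^{n}(n-j) = \binom{n}{2}$ boxes in total. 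Since matched boxes carry equal $\gamma$, their contributions cancel in every $S_l$, giving $|S_l(\mmu)| = |S_l(\llambda)| + e_l$, where $e_l$ is the number of staircase boxes with $\gamma = l$.

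It then remains to compute $e_l$ and compare with $\dv'$. The staircase box in the $j$-th new row and column $c'$ has $\gamma = m + 1 - c'$, so as $c'$ ranges over $1, \ldots, n-j$ this value ranges over $\{k+j+1, \ldots, m\}$. Hence $e_l = \#\{ j : 1 \leq j \leq n,\ k+j+1 \leq l \leq m \}$, which equals $l - k - 1$ when $k+1 \leq l \leq m$ and $0$ when $1 \leq l \leq k$. Combining with the hypothesis $|S_l(\llambda)| = \dv_l$, I conclude $|S_l(\mmu)| = \dv_l = \dv'_l$ for $1 \leq l \leq k$ and $|S_l(\mmu)| = \dv_l + (l-k-1) = \dv'_l$ for $k+1 \leq l \leq m$, which is exactly the claim.
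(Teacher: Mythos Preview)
Your proof is correct and carries out in full detail precisely the ``direct computation from the definitions'' that the paper's one-line proof leaves to the reader. The $\gamma$-preserving bijection between row $j$ of $\lambda^{k+1,1}$ and columns $n-j+1,\ldots,n-j+\alpha_j$ of $\mu^{m,\dw_m+j}$, together with the staircase count $e_l=l-k-1$, is exactly the right mechanism, and your bookkeeping of $\dw'$ and the verification that each $\mu^{i,j}$ is a genuine partition are both accurate.

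One small point worth making explicit: your bijection tacitly uses that $\lambda^{k+1,1}$ has at most $n$ rows (otherwise rows $j>n$ would have no partner partition $\mu^{m,\dw_m+j}$). This holds because every box of $\llambda$ has $\gamma(\square)\in\{1,\ldots,m\}$---the box $(n+1,1)$ of $\lambda^{k+1,1}$ would have $\gamma=m+1$---which is implicit in the identification of $(\dv,\dw)$-tuples with torus fixed points (there being no tautological bundle $\mathcal{V}_{m+1}$). Stating this would make the argument airtight, but it does not affect the substance of your proof.
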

 \begin{proof}
     This follows by direct computation from the definitions of $\dv'$, $\dw'$, and $\mmu$.
 \end{proof}

\begin{Proposition}\label{fpcorr}
    Let $F_{\llambda}$ be the $\iota(\bT)$ fixed component of $\mathcal{M}(\dv',\dw')$ containing $\llambda$. Then $\mmu \in F_{\llambda} \cap \mathcal{M}(\dv',\dw')^{\bT'}$.
\end{Proposition}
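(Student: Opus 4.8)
The plan is to prove the two clauses of the statement separately: that $\mmu$ is a $\bT'$-fixed point, and that the corresponding point lies in the component $F_\llambda$. The first is immediate. By the preceding Proposition, $\mmu$ is a genuine $(\dv',\dw')$-tuple of partitions, so under the bijection of \cite{dinkinselliptic} it labels a point of $\mathcal{M}(\dv',\dw')^{\bT'}$, and since $\iota(\bT)\subset\bT'$ this point is in particular $\iota(\bT)$-fixed. (Here I read ``the component containing $\llambda$'' as the component containing $\Phi(\llambda)$, which is $\iota(\bT)$-fixed by the Corollary to Proposition \ref{toruseqprop}.)

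For the second clause I would argue via tautological characters. The fiberwise $\iota(\bT)$-character of each tautological bundle $\mathcal{V}'_i$ is locally constant on $\mathcal{M}(\dv',\dw')^{\iota(\bT)}$; moreover, for these quiver varieties the tuple of these characters records exactly the $\iota(\bT)$-weight-space dimensions of the $V'_i$, i.e.\ the dimension vectors of the smaller quiver varieties whose product is the $\iota(\bT)$-fixed component, and hence is a complete invariant of the component. Since a connected component of a fixed locus is closed, it therefore suffices to prove that
$$
\iota^{*}\!\left(\mathcal{V}'_i|_{\mmu}\right)=\mathcal{V}'_i|_{\Phi(\llambda)}\qquad\text{as }\iota(\bT)\text{-characters, for all }i .
$$
The left-hand side I would compute directly from (\ref{tbwts}): the unchanged partitions of $\mmu$ contribute the weights of $\mathcal{V}_i|_{\llambda}$ with the removed partition $\lambda^{k+1,1}$ deleted, while each new one-row partition $\mu^{m,\dw_m+j}=(\lambda^{k+1,1}_j+n-j)$ contributes boxes carrying the framing weight $c_j$, which under the substitution (\ref{torimap}) becomes $\hbar^{\,j-n}a_{k+1,1}$.

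The main obstacle is the right-hand side and the resulting combinatorial identity. Although $V'_i=V_i\oplus\mathbb{C}^{\,i-k-1}$ as vector spaces, the $\iota(\bT)$-action at $\Phi(\llambda)$ does \emph{not} respect this splitting: the compensating gauge constructed in the proof of Proposition \ref{toruseqprop} mixes the original directions with the new ones, and only after diagonalizing does one obtain the correct weights. Thus the heart of the proof is to check, weight space by weight space, that the boxes of the deleted partition $\lambda^{k+1,1}$ (spread across the vertices $k+1,\dots,m$ by the cross terms of $X',Y'$ built from $A_1,B_1$) are reproduced exactly by the $n$ new rows at vertex $m$ after the substitution $c_j\mapsto\hbar^{\,j-n}a_{k+1,1}$. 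This bookkeeping is controlled by the functions $\gamma$ and $\delta^{\mmu}$, and the key simplifying input is the constraint $1\le\gamma(\square)\le m$ for every box, which bounds the shape of $\lambda^{k+1,1}$ and makes the matching finite and explicit; once it is verified for every $i$, the component invariants agree and $\mmu\in F_\llambda$ follows.

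A cleaner alternative, which sidesteps computing the character at $\Phi(\llambda)$, is to use the residual torus $\bT'/\iota(\bT)$: it commutes with $\iota(\bT)$, hence acts on $\mathcal{M}(\dv',\dw')^{\iota(\bT)}$ preserving the closed component $F_\llambda$. I would show that the limit of $\Phi(\llambda)$ under a suitable one-parameter subgroup of this residual torus exists, is $\bT'$-fixed, and, by taking the limit of the explicit block matrices defining $\Phi(X,Y,I,J)$, is precisely $\mmu$. Either route reduces the Proposition to an explicit but routine matrix/combinatorial computation, with the genuine content being the identification of the redistributed data of $\lambda^{k+1,1}$ with the new rows prescribed in Definition \ref{fpmatch}.
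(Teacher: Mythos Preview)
Your proposal is correct and is essentially a detailed unpacking of the paper's one-line proof, which simply refers back to the explicit construction of $\Phi$ in section \ref{localconstruction}. The tautological-character argument you outline (with the gauge from Proposition \ref{toruseqprop} supplying the weights on the extra $\mathbb{C}^{i-k-1}$ summand) is exactly the content hidden behind that pointer, and your alternative via a one-parameter subgroup of $\bT'/\iota(\bT)$ is a clean variant of the same computation.
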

\begin{proof}
    This follows from the construction of $\Phi$ in section \ref{localconstruction}.
\end{proof}

 Consider the diagram
\begin{center}
\begin{tikzcd}
    K_{\bT'\times \mathbb{C}^{\times}_{q}}(\mathcal{M}_{\theta}(\dv',\dw')) \arrow[r] \arrow[d,"\Phi^{*}"] & K_{\bT'\times\mathbb{C}^{\times}_{q}}(\mu)  \arrow[d,"\iota^{*}"] \\
    K_{\bT\times\mathbb{C}^{\times}_{q}}(\mathcal{M}_{\theta}(\dv,\dw))  \arrow[r,] & K_{\bT \times\mathbb{C}^{\times}_{q}}(\lambda)
\end{tikzcd}
\end{center}
where the horizontal maps are restrictions to torus fixed points.

\begin{Lemma}\label{fpcommute}
    The previous diagram commutes.
\end{Lemma}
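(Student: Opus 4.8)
The diagram to be verified has four corners: the $K$-theory of $\mathcal{M}_{\theta}(\dv',\dw')$, the $K$-theory of the single fixed point $\mu$, the $K$-theory of $\mathcal{M}_{\theta}(\dv,\dw)$, and the $K$-theory of the single fixed point $\lambda$. The horizontal maps are restriction of a $K$-theory class to a torus-fixed point, the left vertical map is $\Phi^{*}$, and the right vertical map is the pullback $\iota^{*}$ along the torus inclusion. The plan is to chase a class around the square and compare the two outputs at the level of the geometric maps before invoking functoriality of $K$-theoretic pullback. The essential geometric input is Proposition \ref{fpcorr}: the fixed point $\mu$ of $\mathcal{M}_{\theta}(\dv',\dw')$ lies in the same $\iota(\bT)$-fixed component $F_{\llambda}$ that contains $\Phi(\lambda)$, and $\mu$ itself is $\bT'$-fixed.

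\emph{Reducing to a commuting square of spaces and maps.}
First I would make the restriction maps concrete. The top restriction sends a class on $\mathcal{M}_{\theta}(\dv',\dw')$ to its fiber at the point $\mu$; this is the pullback along the inclusion $\iota_{\mu}: \{\mu\} \hookrightarrow \mathcal{M}_{\theta}(\dv',\dw')$. Likewise the bottom restriction is pullback along $\iota_{\lambda}: \{\lambda\} \hookrightarrow \mathcal{M}_{\theta}(\dv,\dw)$. Since $\Phi$ is $\iota$-equivariant (Proposition \ref{toruseqprop}) and $\Phi(\lambda)$ lies in the component $F_{\llambda}$ whose unique $\bT'$-fixed point relevant here is $\mu$, the key is that $\Phi \circ \iota_{\lambda}$ and $\iota_{\mu}$ agree as maps from a point into $\mathcal{M}_{\theta}(\dv',\dw')$ after passing to the $\iota(\bT)$-fixed locus. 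Concretely, I would argue that the composite geometric square
\begin{center}
\begin{tikzcd}
\{\mu\} \arrow[r,"\iota_{\mu}"] & \mathcal{M}_{\theta}(\dv',\dw') \\
\{\lambda\} \arrow[r,"\iota_{\lambda}"] \arrow[u] & \mathcal{M}_{\theta}(\dv,\dw) \arrow[u,"\Phi"']
\end{tikzcd}
\end{center}
commutes up to the identification of fixed-point $K$-theory that the torus map $\iota$ induces, where the left vertical arrow is the evident identification of the two fixed points furnished by Definition \ref{fpmatch}. Then commutativity of the original $K$-theory diagram follows by contravariant functoriality of equivariant $K$-theoretic pullback applied to this commuting square of spaces, together with the compatibility of $\iota^{*}$ with restriction to fixed points.

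\emph{The main obstacle and how I would handle it.}
The subtle point is not the abstract functoriality but the bookkeeping of \emph{which} torus acts where. The class on $\mathcal{M}_{\theta}(\dv',\dw')$ is $\bT' \times \mathbb{C}^{\times}_{q}$-equivariant, its restriction to $\mu$ lands in $R' := K_{\bT'\times\mathbb{C}^{\times}_{q}}(pt)$, and $\iota^{*}$ then reexpresses the $\bT'$-weights as $\bT$-weights via the substitution (\ref{torimap}). Going the other way, $\Phi^{*}$ produces a $\bT\times\mathbb{C}^{\times}_{q}$-class on $\mathcal{M}_{\theta}(\dv,\dw)$, and its restriction to $\lambda$ lands directly in $R := K_{\bT\times\mathbb{C}^{\times}_{q}}(pt)$. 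The two resulting elements of $R$ must coincide, and this is exactly the statement that the $\bT'$-weight of a tautological fiber at $\mu$, pulled back under $\iota$, equals the $\bT$-weight of the corresponding tautological fiber at $\lambda$. I would verify this on the tautological generators of $K$-theory using the explicit character formula (\ref{tbwts}): for each box $\square$ the content-shift $\gamma(\square)$ and height $\delta_{\square}$ are prescribed by Definition \ref{fpmatch}, and the length-one partitions added at vertex $m$ carry precisely the weights $\hbar^{j-n}a_{1,1}$ that the rule $c_{j}\mapsto \hbar^{j-n}a_{1,1}$ in (\ref{torimap}) assigns to the $c$-coordinates. Since tautological bundles generate equivariant $K$-theory of Nakajima varieties, agreement on these generators suffices. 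The real work, and the only genuine obstacle, is checking that this weight matching is exact for the newly created boxes at the last vertex — this is where the shift $\tilde z$ anticipated in Theorem \ref{mainthmintro} originates — but it is a finite, deterministic comparison rather than a conceptual difficulty, so I would present it as a direct computation from Definition \ref{fpmatch} and the formulas (\ref{torimap}) and (\ref{tbwts}).
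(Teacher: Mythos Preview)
Your second argument --- verify the identity on tautological bundles via the explicit character formula (\ref{tbwts}) and then invoke that tautological classes generate equivariant $K$-theory of Nakajima varieties --- is exactly the paper's proof (the paper cites \cite{kirv} for the generation statement). So the substance of your proposal is correct and matches the paper.

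Two corrections are worth recording. First, your initial ``commuting square of spaces'' argument does not work as written: the square
\[
\begin{tikzcd}
\{\mu\} \arrow[r,"\iota_{\mu}"] & \mathcal{M}_{\theta}(\dv',\dw') \\
\{\lambda\} \arrow[r,"\iota_{\lambda}"] \arrow[u] & \mathcal{M}_{\theta}(\dv,\dw) \arrow[u,"\Phi"']
\end{tikzcd}
\]
does \emph{not} commute on the level of spaces, because $\Phi(\lambda)$ is in general not equal to $\mu$. Proposition \ref{fpcorr} only asserts that $\mu$ lies in the same connected $\iota(\bT)$-fixed component $F_{\llambda}$ as $\Phi(\lambda)$; that component can be positive-dimensional. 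Pure functoriality of pullback therefore does not apply. What rescues the argument --- and what your tautological-class computation implicitly uses --- is that the $\iota(\bT)$-character of the fiber of $\mathcal{V}'_{i}$ is locally constant along $F_{\llambda}$, so it coincides at $\Phi(\lambda)$ and at $\mu$. That is the actual content, and it is exactly the check the paper performs directly from (\ref{tbwts}) and the construction of $\Phi$.

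Second, the shift $\tilde z$ in Theorem \ref{mainthmintro} is unrelated to this lemma. The weight matching for the tautological fibers is exact, with no discrepancy; the $q$-shifts of the K\"ahler parameters arise later, in the term-by-term comparison of the localization formula for the vertex in section \ref{proof}, not from the $K$-theory pullback square.
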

\begin{proof}
    Let $\mathcal{V}_{i}'$ be a tautological bundle on $\mathcal{M}(\dv',\dw')$. Formula (\ref{tbwts}) and the definition of (\ref{onestep}) show that $\iota^{*}(\mathcal{V}_{i}'|_{\mmu})=\Phi^{*}(\mathcal{V}_{i}')|_{\llambda}$. By \cite{kirv}, the $K$-theory of Nakajima varieties is generated by tautological classes. The lemma follows.
\end{proof}

\subsection{Examples}

To help the read parse the constructions and notation above, we provide a few examples.

\begin{Example}
Let $\dv=(1,1)$ and $\dw=(1,1)$. Then $\mathcal{M}'=\mathcal{M}((1,2),(0,3))$. The map on fixed points from Definition \ref{fpmatch} is as follows:
\begin{align*}
    ((1),(1)) \mapsto ((1),(2),(0)) \\
    ((1,1),(0)) \mapsto ((0),(2),(1)) \\
    ((0),(2)) \mapsto ((2),(1),(0))
\end{align*}
As in section \ref{torusmap}, the tori are $\bT=\mathbb{C}^{\times}_{a_{1,1}} \times \mathbb{C}^{\times}_{a_{2,1}} \times \mathbb{C}^{\times}_{\hbar}$ and $\bT'= \mathbb{C}^{\times}_{b_{2,1}}\times \mathbb{C}^{\times}_{c_{1}} \times \mathbb{C}^{\times}_{c_{2}} \times \mathbb{C}^{\times}_{\hbar}$ and the map (\ref{torimap}) is given by
$$
(a_{1,1},a_{2,1},\hbar)\mapsto (a_{2,1},\hbar^{-1} a_{1,1},a_{1,1},\hbar)
$$
\end{Example}

Let us next consider an example where we apply the construction (\ref{onestep}) several times, as in Theorem \ref{globalphi}.
\begin{Example}
    Let $\dv=(2,3,4,4,3,1)$ and $\dw=(0,0,1,2,0,0)$. The map in Theorem \ref{globalphi} is
    \begin{align}
\mathcal{M}(\dv,\dw) &\hookrightarrow \mathcal{M}((2,3,4,4,4,3),(0,0,1,1,0,3)) \nonumber\\
&\hookrightarrow \mathcal{M}((2,3,4,4,5,5)),(0,0,1,0,0,6))) \nonumber\\
&\hookrightarrow \mathcal{M}((2,3,4,5,7,8),(0,0,0,0,0,10)) \label{ex1}
\end{align}
 Consider the $(\dv,\dw)$-tuple of partitions 
    $$
    \llambda=(((3,3,2),(2,2),(4,1,1))
    $$
    Under the first step of the embedding, Definition \ref{fpmatch} sends $\llambda$ to the fixed point $((3,3,1),(2,2),(6),(2),(1))$ of $\mathcal{M}((2,3,4,4,4,3),(0,0,1,1,0,3))$. This is depicted in Figure \ref{fpex}.

    The reader can check that under the composition of all the maps in (\ref{ex1}), Definition \ref{fpmatch} maps $\llambda$ to the fixed point $((6),(2),(1),(4),(3),\emptyset,(6),(5),(2),\emptyset)$ of $\mathcal{M}((2,3,4,5,7,8),(0,0,0,0,0,10))$.
    
    As the picture shows, essentially what happens is that each column lengthens down and to the right as much as it needs to in order to be a partition with corner box above the last vertex.

The tori acting on the varieties in the first and last steps of (\ref{ex1}) are $\bT = \mathbb{C}^{\times}_{a_{3,1}} \times \mathbb{C}^{\times}_{a_{4,1}} \times \mathbb{C}^{\times}_{a_{4,2}} \times \mathbb{C}^{\times}_{\hbar}$ and $\bT'= (\mathbb{C}^{\times})^{10}\times \mathbb{C}^{\times}_{\hbar}$, respectively. The map on tori (\ref{torimap}) is
\begin{multline*}
    (a_{3,1},a_{4,1},a_{4,2},\hbar) \mapsto \\
    (\hbar^{-2} a_{4,2},\hbar^{-1} a_{4,2},a_{4,2},\hbar^{-2} a_{4,1},\hbar^{-1} a_{4,1},a_{4,1},\hbar^{-3} a_{3,1},\hbar^{-2} a_{3,1},\hbar^{-1} a_{3,1},a_{3,1},\hbar)
\end{multline*}

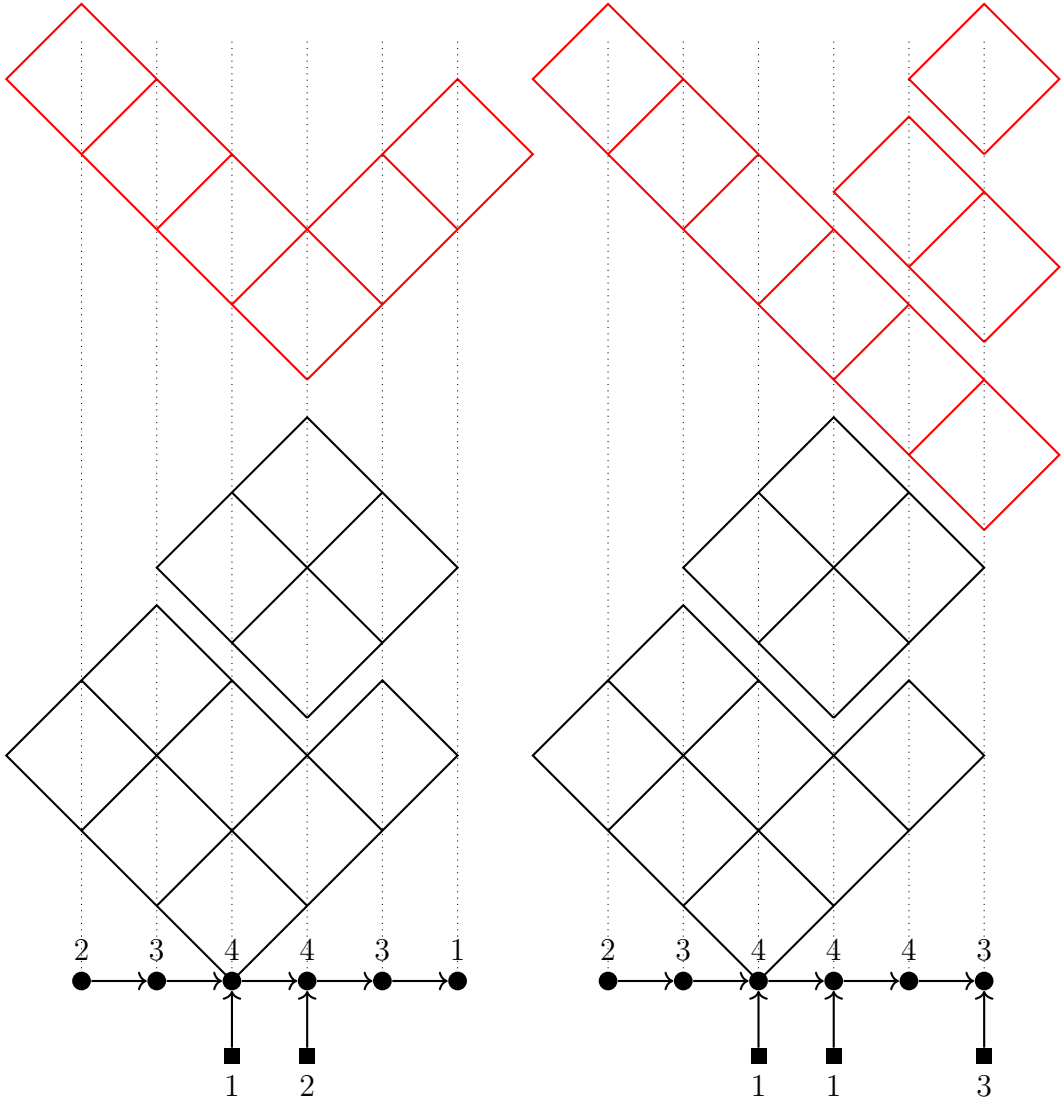
\begin{figure}[htbp]
    \centering
\begin{tikzpicture}[roundnode/.style={circle,fill,inner sep=2.5pt},refnode/.style={circle,inner sep=0pt},squarednode/.style={rectangle,fill,inner sep=3pt}] 
\node[roundnode,label=above:{2}] (N1) at (-2,0){};
\node[roundnode,label=above:{3}] (N2) at (-1,0){};
\node[roundnode,label=above:{4}] (N3) at (0,0) {};
\node[roundnode,label=above:{4}] (N4) at (1,0){};
\node[roundnode,label=above:{3}] (N5) at (2,0){};
\node[roundnode,label=above:{1}] (N6) at (3,0){};
\node[squarednode,label=below:{$1$}] (F1) at (0,-1){};
\node[squarednode,label=below:{$2$}] (F2) at (1,-1){};

\begin{scope}[shift={(0,0)}]
\draw[thick,-](0,0)--(-3,3)--(-1,5)--(1,3)--(2,4)--(3,3)--(0,0);
\draw[thick,-](1,1)--(-2,4);
\draw[thick,-](2,2)--(1,3);
\draw[thick,-](-1,1)--(1,3);
\draw[thick,-](-2,2)--(0,4);
\end{scope}

\begin{scope}[shift={(1,3.5)}]
\draw[thick,-](0,0)--(-2,2)--(0,4)--(2,2)--(0,0);
\draw[thick,-](1,1)--(-1,3);
\draw[thick,-](-1,1)--(1,3);
\end{scope}

\begin{scope}[shift={(1,8)},color=red]
\draw[thick,-](0,0)--(-4,4)--(-3,5)--(0,2)--(2,4)--(3,3)--(0,0);
\draw[thick,-](-1,1)--(0,2);
\draw[thick,-](-2,2)--(-1,3);
\draw[thick,-](-3,3)--(-2,4);
\draw[thick,-](1,1)--(0,2);
\draw[thick,-](2,2)--(1,3);
\end{scope}

\draw[thick, ->] (N1) -- (N2);
\draw[thick, ->] (N2) -- (N3);
\draw[thick, ->] (N3) -- (N4);
\draw[thick, ->] (N4) -- (N5);
\draw[thick, ->] (N5) -- (N6);
\draw[thick, ->] (F1) -- (N3);
\draw[thick, ->] (F2) -- (N4);

\draw[dotted, -](0,0)--(0,12.5);
\draw[dotted, -](1,0)--(1,12.5);
\draw[dotted, -](2,0)--(2,12.5);
\draw[dotted, -](3,0)--(3,12.5);
\draw[dotted, -](-1,0)--(-1,12.5);
\draw[dotted, -](-2,0)--(-2,12.5);

\begin{scope}[shift={(7,0)}]
\node[roundnode,label=above:{2}] (N1) at (-2,0){};
\node[roundnode,label=above:{3}] (N2) at (-1,0){};
\node[roundnode,label=above:{4}] (N3) at (0,0) {};
\node[roundnode,label=above:{4}] (N4) at (1,0){};
\node[roundnode,label=above:{4}] (N5) at (2,0){};
\node[roundnode,label=above:{3}] (N6) at (3,0){};
\node[squarednode,label=below:{$1$}] (F1) at (0,-1){};
\node[squarednode,label=below:{$1$}] (F2) at (1,-1){};
\node[squarednode,label=below:{$3$}] (F3) at (3,-1){};

\begin{scope}[shift={(0,0)}]
\draw[thick,-](0,0)--(-3,3)--(-1,5)--(1,3)--(2,4)--(3,3)--(0,0);
\draw[thick,-](1,1)--(-2,4);
\draw[thick,-](2,2)--(1,3);
\draw[thick,-](-1,1)--(1,3);
\draw[thick,-](-2,2)--(0,4);
\end{scope}

\begin{scope}[shift={(1,3.5)}]
\draw[thick,-](0,0)--(-2,2)--(0,4)--(2,2)--(0,0);
\draw[thick,-](1,1)--(-1,3);
\draw[thick,-](-1,1)--(1,3);
\end{scope}

\begin{scope}[shift={(3,6)},color=red]
\draw[thick,-](0,0)--(-6,6)--(-5,7)--(1,1)--(0,0);
\draw[thick,-](-1,1)--(0,2);
\draw[thick,-](-2,2)--(-1,3);
\draw[thick,-](-3,3)--(-2,4);
\draw[thick,-](-4,4)--(-3,5);
\draw[thick,-](-5,5)--(-4,6);
\end{scope}

\begin{scope}[shift={(3,8.5)},color=red]
\draw[thick,-](0,0)--(-2,2)--(-1,3)--(1,1)--(0,0);
\draw[thick,-](-1,1)--(0,2);
\end{scope}

\begin{scope}[shift={(3,11)},color=red]
\draw[thick,-](0,0)--(-1,1)--(0,2)--(1,1)--(0,0);
\end{scope}

\draw[thick, ->] (N1) -- (N2);
\draw[thick, ->] (N2) -- (N3);
\draw[thick, ->] (N3) -- (N4);
\draw[thick, ->] (N4) -- (N5);
\draw[thick, ->] (N5) -- (N6);
\draw[thick, ->] (F1) -- (N3);
\draw[thick, ->] (F2) -- (N4);
\draw[thick, ->] (F3) -- (N6);

\draw[dotted, -](0,0)--(0,12.5);
\draw[dotted, -](1,0)--(1,12.5);
\draw[dotted, -](2,0)--(2,12.5);
\draw[dotted, -](3,0)--(3,12.5);
\draw[dotted, -](-1,0)--(-1,12.5);
\draw[dotted, -](-2,0)--(-2,12.5);
\end{scope}

\end{tikzpicture}

 \caption{The fixed point on the left maps to the fixed point on the right.}\label{fpex}
\end{figure}

\end{Example}

\section{Quasimaps}\label{secqm}
We give a brief review of vertex function for Nakajima varieties. The foundational treatment of quasimap counting was given in \cite{qm}, and the case of quasimaps to Nakajima quiver varieties is treated in \cite{pcmilect}. It is also reviewed in \cite{dinkinsthesis} section 2.2 and \cite{dinksmir3} section 2.3.

\subsection{Localization formula}
Let $\mathcal{M}=\mathcal{M}_{-\theta^{+}}(\dv,\dw)$ be a type $A_{m}$ Nakajima quiver variety. The vertex function of $\mathcal{M}$ is an equivariant count of quasimaps from $\mathbb{P}^{1}$ to $\mathcal{M}$ nonsingular at $\infty$, see \cite{pcmilect} section 7.2 for precise definitions. See also \cite{dinkinsthesis} section 2, \cite{dinksmir3} section 2, and \cite{Pushk1} section 2. For explicit computations of vertex functions, see \cite{dinkinsthesis} section 5.1.3 and \cite{Pushk1} section 2.

A stable quasimap from $\mathbb{P}^{1}$ to $\mathcal{M}$ provides the data of 
\begin{itemize}
    \item Vector bundles $\mathscr{V}_{i}$ over $\mathbb{P}^{1}$ for $i \in Q_{0}$.
    \item Topologically trivial vector bundles $\mathscr{W}_{i}$ over $\mathbb{P}^{1}$ for $i \in Q_{0}$.
    \item A global section $s \in H^{0}\left(\mathbb{P}^{1}, \mathscr{M} \oplus \hbar^{-1} \mathscr{M}^{\vee} \right)$, where
    $$
\mathscr{M}=\bigoplus_{i \to j} \text{Hom}(\mathscr{V}_{i},\mathscr{V}_{j}) \oplus \bigoplus_{i \in Q_{0}} \text{Hom}(\mathscr{W}_{i},\mathscr{V}_{i})
    $$
\end{itemize}
such that $s(p)$ satisfies the moment map equations for all $p \in \mathbb{P}^{1}$ and $s(p)$ is a $\theta$-semistable point for all but finitely many $p \in \mathbb{P}^{1}$. Points $p$ where $s(p)$ is $\theta$-semistable are called nonsingular points of the quasimap.

The vector $d=(d_{i})_{i \in Q_{0}}=(\deg \mathscr{V}_{i})_{i \in Q_{0}} \in \mathbb{Z}^{Q_{0}}$ is called the degree of a quasimap. Let $\qm$ be the moduli space of stable quasimaps from $\mathbb{P}^{1}$ to $\mathcal{M}$. For $d \in \mathbb{Z}^{Q_{0}}$, let $\qm^{d}\subset \qm$ be the moduli space of degree $d$ quasimaps. It is known that $\qm^{d}$ admits a canonical deformation-obstruction theory, which gives rise to a virtual structure sheaf $\mathcal{O}_{\text{vir}}^{d}$ \cite{qm}. Let $T^{1/2}\mathcal{M}\in K_{\bT}(\mathcal{M})$ be the polarization of the tangent bundle of $\mathcal{M}$ given by
$$
T^{1/2}\mathcal{M}=\sum_{i \to j} \text{Hom}(\mathcal{V}_{i},\mathcal{V}_{j}) + \sum_{i \in Q_{0}} \text{Hom}(\mathcal{W}_{i},\mathcal{V}_{i}) - \sum_{i \in Q_{0}} \text{Hom}(\mathcal{V}_{i},\mathcal{V}_{i})
$$
This induces a virtual bundle $\mathscr{T}^{1/2}$ on $\mathbb{P}^{1} \times \qm$. As discussed in \cite{pcmilect} and \cite{NO}, it is better to study the symmetrized virtual structure sheaf, which is defined by 
$$
\vrs^{d} =\mathcal{O}^{d}_{\text{vir}}\otimes \left(\mathscr{K} \otimes \frac{\det \mathscr{T}^{1/2}|_{0}}{\det \mathscr{T}^{1/2}|_{\infty}}\right)^{1/2}
$$
where $\mathscr{K}_{\text{vir}}=(\det T_{\text{vir}}\qm^{d})^{-1}$.

Let $\mathbb{C}^{\times}_{q}$ act on $\mathbb{P}^{1}$ by
$$
q \cdot [x_{0}:x_{1}] = [x_{0}:q x_{1}]
$$
This induces an action of $\mathbb{C}^{\times}_{q}$ on $\qm$. The action of the torus $\bT$ on $\mathcal{M}$ also induces an action on $\qm$. We will work equivariantly with respect to $\bT_{q}=\bT \times \mathbb{C}^{\times}_{q}$. Denote $0=[0:1]$ and $\infty=[1:0]$.

Consider the open locus $\qm^{d}_{\ns \infty} \subset \qm^{d}$ of quasimaps that are nonsingular at $\infty \in \mathbb{P}^{1}$. Thus there is a well-defined evaluation map $\text{ev}_{\infty}: \qm^{d}_{\ns \infty} \to \mathcal{M}$.

The vertex function of $\mathcal{M}$ is the formal series with coefficients in the localized $K$-theory of $\mathcal{M}$ defined by:
$$
V(z)=\sum_{d} \text{ev}_{\infty,*}\left( \vrs^{d} \right) z^{d}  \in K_{\bT\times\mathbb{C}^{\times}_{q}}(\mathcal{M})_{loc}[[z]]
$$


Localization is necessary to define the vertex function since $\text{ev}_{\infty}$ is not a proper map. The set of degrees of all quasimaps forms a cone in $\mathbb{Z}^{Q_{0}}$. Given $d\in \mathbb{Z}^{Q_{0}}$, we understand $z^{d}$ to be an element of the semigroup algebra of this cone. The notation $[[z]]$ above stands for formal series in $z^{d}$ as $d$ ranges over all possible quasimap degrees. 

The vertex function of $\mathcal{M}$ can be computed by equivariant localization with respect to $\bT_{q}$ in the following way. Define the function $\ahat$ on weights of $\bT_{q}$ by $\ahat(x)=\frac{1}{x^{1/2}-x^{-1/2}}$, and extend it by multiplicativity to sums and differences of weights. This should be thought of as a symmetrized version of the function $x\mapsto \frac{1}{1-x^{-1}}$ that appears in the usual $K$-theoretic equivariant localization formula, see Remark \ref{remahat} below.

By localization, the restriction of the vertex function to $p \in \mathcal{M}^{\bT}$ can be calculated as
\begin{equation}\label{verdef}
V(z)|_{p} = \sum_{\substack{f \in (\qm_{\ns \infty})^{\bT_{q}} \\ f(\infty)=p}} z^{\deg f} \ahat\left(T_{\text{vir},f} \qm - T_{p}\mathcal{M} \right) q^{\deg \mathscr{T}^{1/2}/2}
\end{equation}
In this formula, $T_{p}\mathcal{M}$ and the virtual tangent space $T_{\text{vir},f} \qm$ are identified with their $\bT_{q}$ characters, and hence lie in the domain of $\ahat$.

\begin{Remark}\label{remahat}
    Tangent weights contribute to the vertex function via $\ahat$ rather than the usual $K$-theoretic Euler class because the vertex function is defined using the \textit{symmetrized} virtual structure sheaf, see \cite{pcmilect} section 6.1. This is also the reason for the appearance of $q^{\deg \mathscr{T}^{1/2}/2}$.
\end{Remark}
\begin{Remark}
    The subtraction of $T_{p}\mathcal{M}$ is simply a normalization condition and has the effect of making the vertex function start with $1$.
\end{Remark}

\subsection{Explicit formula for vertex functions}
By definition, a stable quasimap $f$ to $\mathcal{M}$ provides vector bundles $\mathscr{V}_{i}$ and $\mathscr{W}_{i}$ over $\mathbb{P}^{1}$. Let
$$
\mathcal{T}^{1/2}= \sum_{i=1}^{m-1} \text{Hom}(\mathscr{V}_{i},\mathscr{V}_{i+1}) + \sum_{i=1}^{m} \text{Hom}(\mathscr{W}_{i},\mathscr{V}_{i}) - \sum_{i=1}^{m} \text{Hom}(\mathscr{V}_{i}, \mathscr{V}_{i})
$$
The virtual tangent space at $f$ of the moduli space of quasimaps is
\begin{equation}\label{vtan}
T_{\text{vir},f} \qm = H^{*}\left(\mathbb{P}^{1}, \mathcal{T}^{1/2} + \hbar^{-1} \left(\mathcal{T}^{1/2}\right)^{\vee} \right),
\end{equation}
see \cite{pcmilect}.
This is an equality of representations of $\bT_{q}$. We compute the character of this representation.

As $\mathcal{V}_{i}$ is a vector bundle over $\mathbb{P}^{1}$, there is a decomposition 
$$
\mathcal{V}_{i}\cong \bigoplus_{j=1}^{\dv_{i}} x_{i,j}(p) \mathcal{O}(d_{i,j})
$$
where $d_{i,j} \in \mathbb{Z}$ and $\{x_{i,j}(p)\}_{1 \leq j \leq \dv_{i}}$ is the set of $\bT$-weights of the tautological bundle $\mathcal{V}_{i}$ on $\mathcal{M}$. The lines bundles $\mathcal{O}(d)$ are linearized so that the $\mathbb{C}^{\times}_{q}$ character of $H^{0}(\mathbb{P}^{1},\mathcal{O}(d))$ is $\sum_{j=0}^{d} q^{j}$ when $d\geq0$. The integers $d_{i,j}$ that appear must satisfy certain linear inequalities to arise from a quasimap, see \cite{KorZeit} section 3. We denote this set by $C_{p} \subset \mathbb{Z}^{|\dv|}$. In fact, one can show that for each $\{d_{i,j}\} \in C_{p}$, there exists a unique $\bT\times\mathbb{C}^{\times}_{q}$ fixed quasimap in $\qm_{\ns \infty}$, see \cite{dinkinsthesis} section 5.1.3.

\begin{Lemma}[\cite{Pushk1} Lemma 1]
For any weight $w$ of $\bT_{q}$, we have
\begin{multline*}
H^{*}\left(w \mathcal{O}(d)+\frac{1}{\hbar w} \mathcal{O}(-d) \right)-w - \frac{1}{\hbar w} \\ =
\begin{cases}
    w q (1+q+\ldots q^{d-1}) - \frac{1}{\hbar w}(1 + q^{-1} + \ldots + q^{-(d-1)}) & d \geq 0 \\
    \frac{q}{\hbar w}(1 + q + \ldots q^{-d-1}) - w(1+ q^{-1} +\ldots q^{d+1}) & d < 0
\end{cases} \\
=\begin{cases}
    w q \sum_{i=0}^{d-1} q^{i} - \frac{1}{\hbar w}\sum_{i=-(d-1)}^{0} q^i & d \geq 0 \\
    \frac{q}{\hbar w}\sum_{i=0}^{-d-1} q^i - w\sum_{i=d+1}^{0} q^i & d < 0
\end{cases}
\end{multline*}
\end{Lemma}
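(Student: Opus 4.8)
The plan is to reduce the statement to the computation of the $\mathbb{C}^{\times}_{q}$-equivariant Euler characteristic of a single line bundle on $\mathbb{P}^{1}$, and then to absorb the normalizing term $w+\frac{1}{\hbar w}$ at the very end. First I would observe that $H^{*}(\mathbb{P}^{1},-)$ is additive, and that the $\bT$-weights $w$ and $\frac{1}{\hbar w}$ are trivial for the $\mathbb{C}^{\times}_{q}$-action on $\mathbb{P}^{1}$, so they simply factor out of the cohomology. Writing $\chi(e):=[H^{0}(\mathbb{P}^{1},\mathcal{O}(e))]-[H^{1}(\mathbb{P}^{1},\mathcal{O}(e))]\in K_{\mathbb{C}^{\times}_{q}}(\mathrm{pt})$, this gives
$$
H^{*}\!\left(w\,\mathcal{O}(d)+\tfrac{1}{\hbar w}\,\mathcal{O}(-d)\right)=w\,\chi(d)+\tfrac{1}{\hbar w}\,\chi(-d),
$$
so the whole problem becomes the determination of $\chi(e)$ as a character of $\mathbb{C}^{\times}_{q}$, for every $e\in\mathbb{Z}$.

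Next I would compute $\chi(e)$ in both regimes. For $e\geq 0$ the linearization fixed in the text forces $H^{1}=0$ and $\chi(e)=\sum_{j=0}^{e}q^{j}$. For $e\leq -1$ one has $H^{0}=0$, and I would obtain the character of $H^{1}$ either by $\mathbb{C}^{\times}_{q}$-equivariant localization on $\mathbb{P}^{1}$ (summing the two fixed-point contributions at $0$ and $\infty$, with the tangent and fiber weights dictated by the chosen linearization) or, equivalently, by Serre duality $H^{1}(\mathcal{O}(e))\cong H^{0}(\mathcal{O}(-e-2))^{\vee}$ together with the weight of the dualizing sheaf. Either route yields the uniform rational expression $\chi(e)=\frac{1-q^{e+1}}{1-q}$, which for $e\leq -1$ unpacks as the Laurent polynomial $\chi(e)=-\sum_{i=1}^{-e-1}q^{-i}$.

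Then I would substitute and cancel. In the case $d\geq 0$, inserting $\chi(d)=\sum_{j=0}^{d}q^{j}$ and $\chi(-d)=-\sum_{i=1}^{d-1}q^{-i}$, the subtracted $w$ removes the $j=0$ term of $w\sum_{j=0}^{d}q^{j}$, leaving $w\sum_{j=1}^{d}q^{j}=wq\sum_{i=0}^{d-1}q^{i}$; meanwhile the subtracted $\frac{1}{\hbar w}$ combines through the identity $\chi(-d)-1=-\sum_{i=0}^{d-1}q^{-i}$ to give $-\frac{1}{\hbar w}\sum_{i=0}^{d-1}q^{-i}$, which is exactly the claimed right-hand side. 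The case $d<0$ is handled identically, with the roles of the two summands exchanged: there $w\,\chi(d)-w=-w\sum_{i=0}^{-d-1}q^{-i}$ and $\frac{1}{\hbar w}\chi(-d)-\frac{1}{\hbar w}=\frac{q}{\hbar w}\sum_{i=0}^{-d-1}q^{i}$.

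The hard part will be none of the algebra, which is an elementary manipulation of geometric sums, but rather pinning down the equivariant structure entering $H^{1}$ of the negative line bundles: one must fix the tangent and fiber weights at the fixed points (equivalently, the linearization of $\omega_{\mathbb{P}^{1}}$ used in Serre duality) so that they are compatible with the stated normalization $H^{0}(\mathbb{P}^{1},\mathcal{O}(d))=\sum_{j=0}^{d}q^{j}$. Once the conventions are arranged so that $\chi(e)=\frac{1-q^{e+1}}{1-q}$ holds for \emph{all} $e\in\mathbb{Z}$, the two stated cases follow immediately.
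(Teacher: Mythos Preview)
Your proposal is correct and follows essentially the same approach as the paper: the paper's proof simply records that the equivariant dualizing sheaf on $\mathbb{P}^{1}$ is $q\,\mathcal{O}(-2)$ and invokes Serre duality, which is exactly one of the two routes you describe for computing $\chi(e)$ when $e<0$. Your write-up is more explicit (you unpack the geometric sums and carry out the cancellation of the normalizing terms), but the underlying idea is identical.
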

\begin{proof}
    The equivariant dualizing sheaf on $\mathbb{P}^1$ is $q \mathcal{O}(-2)$, where the $q$ means to twist $\mathcal{O}(-2)$ by the trivial line bundle with equivariant weight $q$. The result follows by Serre duality.
\end{proof}

\begin{Lemma}\label{qpoch}
    $$
\ahat\left( H^{*}\left(w \mathcal{O}(d)+\frac{1}{\hbar w} \mathcal{O}(-d) \right)-w - \frac{1}{\hbar w}  \right)=\frac{(\hbar w)_{d}}{(q w)_{d}}\left(-\frac{q^{1/2}}{\hbar^{1/2}} \right)^{d}
    $$
    where $(x)_{d}:=\frac{(x)_{\infty}}{(x q^{d})_{\infty}}$ and $(x)_{\infty}=\prod_{i=0}^{\infty}(1-x q^{i})$.
\end{Lemma}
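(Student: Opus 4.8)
The statement to prove is Lemma \ref{qpoch}, which evaluates $\ahat$ applied to the virtual character computed in the previous lemma. The plan is to take the output of the previous lemma as input — that is, the explicit $\bT_q$-character
$$
H^{*}\left(w \mathcal{O}(d)+\tfrac{1}{\hbar w} \mathcal{O}(-d) \right)-w - \tfrac{1}{\hbar w}
$$
written as a signed sum of monomial weights $q^i w$ and $q^i/(\hbar w)$ — and apply the multiplicativity of $\ahat$ to reduce everything to a product of elementary factors $\ahat(x)=(x^{1/2}-x^{-1/2})^{-1}$.

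First I would treat the case $d\geq 0$, where the character is $w q \sum_{i=0}^{d-1} q^{i} - \tfrac{1}{\hbar w}\sum_{i=-(d-1)}^{0} q^i$. Since $\ahat$ is multiplicative over sums and sends a subtracted weight to a reciprocal, the value is a ratio of two finite products: the numerator collects the factors $\ahat(q^{i+1}w)^{-1}$ coming from the $+w q^i$ terms and the denominator collects $\ahat(q^{-i}/(\hbar w))^{-1}$ from the subtracted terms (or vice versa). The key identity I would use is the elementary rewriting
$$
\frac{1}{\ahat(x)}=x^{1/2}-x^{-1/2}=-x^{-1/2}(1-x),
$$
so that each factor contributes both a $(1-x)$ piece — which assembles into a $q$-Pochhammer symbol — and a half-integer power of the weight, which accumulates into the prefactor $\left(-q^{1/2}/\hbar^{1/2}\right)^{d}$. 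Concretely, the $(1-q^{i+1}w)$ factors for $i=0,\dots,d-1$ telescope into $(qw)_d$ in the denominator, while the $(1-q^{-i}/(\hbar w))$ factors reorganize (after pulling out powers of $q$ to shift indices) into $(\hbar w)_d$ in the numerator, matching the definition $(x)_d=(x)_\infty/(xq^d)_\infty$.

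The main obstacle — really the only delicate point — is bookkeeping the half-powers of $q$, $\hbar$, and $w$ that come out of the $-x^{-1/2}$ prefactors, and checking that the $w$-dependence cancels completely (as it must, since the answer has no free $w^{1/2}$) while the surviving powers of $q^{1/2}$ and $\hbar^{1/2}$ sum to exactly $d/2$ each, with an overall sign $(-1)^d$. I would verify this by summing the exponents over the two arithmetic progressions of indices and confirming the totals. Finally I would handle $d<0$ by the same mechanism applied to the second branch of the previous lemma; alternatively one checks that both sides of the claimed identity are compatible under $d\mapsto -d$ together with the standard relation $(x)_{-d}=1/(xq^{-d})_d$ for $q$-Pochhammer symbols, so the negative case follows formally from the positive one without repeating the computation.
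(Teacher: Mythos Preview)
Your proposal is correct and follows exactly the approach the paper takes: the paper's entire proof is the single sentence ``This follows by applying the function $\ahat$ to the previous lemma,'' and your plan simply unpacks that computation in detail, using multiplicativity of $\ahat$, the rewriting $1/\ahat(x)=-x^{-1/2}(1-x)$, and bookkeeping of the resulting $q$-Pochhammer factors and half-integer prefactors. There is no difference in method, only in the level of detail supplied.
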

\begin{proof}
    This follows by applying the function $\ahat$ to the previous lemma.
\end{proof}

Along with the factor of $q^{\deg \mathscr{T}^{1/2}/2}$, the contribution of $w \mathcal{O}(d)+\frac{1}{\hbar w} O(-d)$ to the vertex function is thus
$$
\frac{(\hbar w)_{d}}{(q w)_{d}}\left(-\frac{q}{\hbar^{1/2}} \right)^{d}
$$

Denote 
$$
\{x\}_{d}=\frac{(\hbar x)_{d}}{(q x)_{d}}\left(-\frac{q}{\hbar^{1/2}}\right)^{d}
$$

Combining Lemma \ref{qpoch}, (\ref{verdef}), and (\ref{vtan}), we deduce the following.

\begin{Theorem}\label{vertex}
The restriction of the vertex function to $p$ is
    \begin{multline*}
        V(z)|_{p}= \sum_{\{d_{i,j}\} \in C_{p}} z^{d} \prod_{i=1}^{m-1} \prod_{j=1}^{\dv_i} \prod_{k=1}^{\dv_{i+1}} \left\{\frac{x_{i+1,k}(p)}{x_{i,j}(p)} \right\}_{d_{i+1,k}-d_{i,j}} \\
        \prod_{i=1}^{m} \prod_{j=1}^{\dw_{i}} \prod_{k=1}^{\dv_{i}} \left\{ \frac{x_{i,k}(p)}{a_{i,j}}\right\}_{d_{i,k}} \prod_{i=1}^{m} \prod_{j=1}^{\dv_{i}} \prod_{k=1}^{\dv_{i}} \left\{ \frac{x_{i,k}(p)}{x_{i,j}(p)}\right\}_{d_{i,k}-d_{i,j}}
    \end{multline*}
    where $z^{d}=\prod_{i=1}^{m}\prod_{j=1}^{\dv_{i}} z_{i}^{d_{i,j}}$.
\end{Theorem}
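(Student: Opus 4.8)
The plan is to evaluate the localization formula (\ref{verdef}) on each $\bT_q$-fixed quasimap and then match the result factor by factor against Lemma \ref{qpoch}. First I would invoke the parametrization of fixed quasimaps recalled above: a $\bT\times\mathbb{C}^{\times}_{q}$-fixed point $f$ of $\qm_{\ns\infty}$ with $\mathrm{ev}_{\infty}(f)=p$ corresponds to a choice of degrees $\{d_{i,j}\}\in C_{p}$ through the splitting $\mathcal{V}_{i}\cong\bigoplus_{j=1}^{\dv_{i}} x_{i,j}(p)\,\mathcal{O}(d_{i,j})$, with each $\mathscr{W}_{i}$ topologically trivial of weights $a_{i,j}$. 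Under this identification the outer sum in (\ref{verdef}) becomes $\sum_{\{d_{i,j}\}\in C_{p}}$, and the monomial $z^{\deg f}$ becomes $z^{d}=\prod_{i,j} z_{i}^{d_{i,j}}$, reproducing the prefactor in the statement.

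Next I would substitute these splittings into the polarization $\mathcal{T}^{1/2}$ from (\ref{vtan}). Since $\mathrm{Hom}$, duality, and tensoring by line bundles are all computed weight by weight, $\mathcal{T}^{1/2}$ breaks into a sum of line bundles $w\,\mathcal{O}(d)$, one for each elementary factor: the edge term $\mathrm{Hom}(\mathscr{V}_{i},\mathscr{V}_{i+1})$ gives $w=x_{i+1,k}(p)/x_{i,j}(p)$ with $d=d_{i+1,k}-d_{i,j}$; the framing term $\mathrm{Hom}(\mathscr{W}_{i},\mathscr{V}_{i})$ gives $w=x_{i,k}(p)/a_{i,j}$ with $d=d_{i,k}$ (as $\mathscr{W}_{i}$ has degree $0$); and the gauge term $-\mathrm{Hom}(\mathscr{V}_{i},\mathscr{V}_{i})$ gives $w=x_{i,k}(p)/x_{i,j}(p)$ with $d=d_{i,k}-d_{i,j}$, carrying an overall minus sign. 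Adding $\hbar^{-1}(\mathcal{T}^{1/2})^{\vee}$ then pairs each summand $w\,\mathcal{O}(d)$ with its partner $\tfrac{1}{\hbar w}\mathcal{O}(-d)$, so that $T_{\mathrm{vir},f}\qm=H^{*}(\mathbb{P}^{1},\cdots)$ is exactly a signed sum of the building blocks $H^{*}\!\left(w\,\mathcal{O}(d)+\tfrac{1}{\hbar w}\mathcal{O}(-d)\right)$ to which Lemma \ref{qpoch} applies.

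I would then treat the normalization. The weights of $T_{p}\mathcal{M}=T^{1/2}\mathcal{M}|_{p}+\hbar^{-1}(T^{1/2}\mathcal{M}|_{p})^{\vee}$ are precisely the $d=0$ specializations $w$ and $\tfrac{1}{\hbar w}$ of the summands above, so the subtraction of $T_{p}\mathcal{M}$ in (\ref{verdef}) cancels, summand by summand, against the subtracted terms $-w-\tfrac{1}{\hbar w}$ that appear in Lemma \ref{qpoch}. Using that $\ahat$ is multiplicative and that $\ahat(-X)=\ahat(X)^{-1}$, the factor $\ahat\!\left(T_{\mathrm{vir},f}\qm-T_{p}\mathcal{M}\right)$ becomes a product over all elementary summands of $\ahat\!\left(H^{*}(w\,\mathcal{O}(d)+\tfrac{1}{\hbar w}\mathcal{O}(-d))-w-\tfrac{1}{\hbar w}\right)^{\pm 1}$, the exponent being negative only on the gauge summands. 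Finally, distributing the global factor $q^{\deg\mathscr{T}^{1/2}/2}$ as one power $q^{d/2}$ per summand converts each Lemma \ref{qpoch} value into the normalized bracket $\{w\}_{d}$, and collecting the edge, framing, and gauge summands yields the three products in Theorem \ref{vertex}.

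The step I expect to be the main obstacle is the sign and normalization bookkeeping rather than any conceptual point: one must verify that the $d=0$ parts of the line-bundle summands reassemble exactly into $T_{p}\mathcal{M}$, so that the normalization really does cancel term by term, and one must track the minus sign on the gauge summands together with the half-integral powers $q^{1/2},\hbar^{1/2}$ so that the individual $\ahat$-values combine into precisely the brackets $\{\,\cdot\,\}_{d}$ as defined, including the overall powers of $-q/\hbar^{1/2}$.
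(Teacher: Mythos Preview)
Your proposal is correct and matches the paper's approach exactly: the paper's proof is the single sentence ``Combining Lemma \ref{qpoch}, (\ref{verdef}), and (\ref{vtan}), we deduce the following,'' and what you have written is a careful unpacking of precisely that combination. Your attention to the minus sign on the gauge summands is well placed; indeed the restatement of the formula immediately after Theorem \ref{vertex} carries the expected exponent $-1$ on the third product.
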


\begin{Remark}
    As already mentioned, the set of degrees $\{d_{i,j}\}$ must satisfy certain conditions to arise from a quasimap. However, the formula in Theorem \ref{vertex} turns out to be quite miraculous: it suffices to take $d_{i,j}\geq 0$ for all $i$ and $j$ and all terms not arising from quasimaps are automatically zero, see \cite{dinksmir2} Proposition 7. 
\end{Remark}

\begin{Remark}
    When $\mathcal{M}$ is the cotangent bundle to $\mathbb{P}^{n}$, Theorem \ref{vertex} shows that the vertex function is equal to the usual basic $q$-hypergeometric function $_{n+1} \phi_{n}$ for certain values of the parameters, see \cite{hypergeo}.
\end{Remark}

Let $\llambda \in \mathcal{M}(\dv,\dw)^{\bT}$. Recall the weights of the tautological bundle $\mathcal{V}_{i}$ from (\ref{tbwts}). Thus we can write the vertex function as
\begin{multline*}
  V(z)|_{\llambda}= \sum_{\{d_{\square}\} \in C_{\llambda}} z^{d} \prod_{\substack{\square,\square' \in \llambda \\ \gamma(\square')=\gamma(\square)+1}} \left\{\frac{a_{\square'}}{a_{\square}}\hbar^{\delta^{\llambda}_{\square'}-\delta^{\llambda}_{\square}} \right\}_{d_{\square'}-d_{\square}} \\
  \prod_{\square \in \llambda} \prod_{j=1}^{\dw_{\gamma(\square)}} \left\{\frac{a_{\square}}{a_{\gamma(\square),j}} \hbar^{\delta^{\llambda}_{\square}} \right\}_{d_{\square}} \prod_{\substack{\square,\square'\in \llambda \\ \gamma(\square)=\gamma(\square')}} \left\{\frac{a_{\square'}}{a_{\square}} \hbar^{\delta^{\llambda}_{\square'}-\delta^{\llambda}_{\square}} \right\}_{d_{\square'}-d_{\square}}^{-1}
\end{multline*}

\begin{Proposition}[\cite{dinksmir2} Proposition 7]\label{degrees}
A collection $\{d_{\square}\}_{\square \in \llambda}$ lies in $C_{\llambda}$ if and only if the following two conditions hold:
\begin{enumerate}
    \item $d_{\square} \geq 0$ for all $\square \in \llambda$
    \item If $\square,\square' \in \lambda^{i,j}$ satisfy $\gamma(\square')=\gamma(\square)\pm 1$ and $\delta^{\llambda}_{\square'}=\delta^{\llambda}_{\square}+1$, then $d_{\square} \leq d_{\square'}$.
\end{enumerate}
\end{Proposition}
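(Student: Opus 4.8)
The plan is to identify $C_{\llambda}$ directly with the set of degree data arising from $\bT\times\mathbb{C}^{\times}_{q}$-fixed stable quasimaps that are nonsingular at $\infty$ and evaluate to $\llambda$, and then to read off conditions 1 and 2 as the exact constraints for such a quasimap to exist. First I would recall that on such a fixed quasimap the bundles split equivariantly as $\mathcal{V}_{i}\cong\bigoplus_{\gamma(\square)=i} a_{\square}\hbar^{\delta^{\llambda}_{\square}}\mathcal{O}(d_{\square})$, by the weight formula (\ref{tbwts}) applied to $\text{ev}_{\infty}=\llambda$, while the framing bundles $\mathcal{W}_{i}$ are equivariantly trivial. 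Relative to these splittings each structure map $X,Y,I$ decomposes into components, and the component from the summand of $\square$ to that of $\square'$ is a $\bT\times\mathbb{C}^{\times}_{q}$-equivariant section of a line bundle of the form $(\text{weight})\cdot\mathcal{O}(d_{\square'}-d_{\square})$, respectively $(\text{weight})\cdot\mathcal{O}(d_{\square})$ for the component of $I$ landing in $\square$. Such a section can be nonzero only if the relevant degree is $\geq 0$, in which case the unique $\mathbb{C}^{\times}_{q}$-eigensection nonzero at $\infty$ is the one vanishing to that order at $0$; this is what forces all the singularities of the quasimap to sit at $0$.

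Next I would use nonsingularity at $\infty$ together with $\text{ev}_{\infty}=\llambda$: this means the section agrees at $\infty$ with the fixed-point datum $\llambda$, so precisely the components that are nonzero at $\llambda$ must be nonzero holomorphic sections. For the $\theta^{-}$-fixed point $\llambda$ one has $J=0$, the map $I$ is nonzero exactly on the corner box of each $\lambda^{i,j}$, and $X$ (resp. $Y$) is nonzero exactly from a box $\square$ to its right (resp. lower) neighbor $\square'$, i.e. on the pairs with $\gamma(\square')=\gamma(\square)+1$ (resp. $\gamma(\square')=\gamma(\square)-1$) and $\delta^{\llambda}_{\square'}=\delta^{\llambda}_{\square}+1$. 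Requiring these components to be nonzero sections yields $d_{\text{corner}}\geq 0$ from $I$ and $d_{\square}\leq d_{\square'}$ for every such adjacent pair from $X$ and $Y$, which is exactly condition 2; since every box is joined to its corner by a monotone chain of such moves, condition 1 ($d_{\square}\geq 0$ for all $\square$) follows. This proves that $C_{\llambda}$ is contained in the stated set.

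For the reverse inclusion I would, given $\{d_{\square}\}$ satisfying conditions 1 and 2, build the candidate fixed quasimap: take the bundles above, set $J=0$, and define each turned-on component of $I,X,Y$ to be the unique equivariant eigensection supplied by the corresponding $\mathcal{O}(\text{nonnegative degree})$, with scalar chosen to match $\llambda$ at $\infty$. One then checks that this section satisfies the moment-map equation $\mu_{\dv,\dw}=0$ identically on $\mathbb{P}^{1}$: with $J=0$ the composites entering $X_{i-1}Y_{i-1}-Y_{i}X_{i}$ land in the same equivariant line bundle and, having the same combinatorial shape as at $\llambda$, reduce to the fixed-point relation multiplied by a common monomial in the coordinate on $\mathbb{P}^{1}$, hence cancel. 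Stability at the generic point is automatic because the quasimap equals the $\theta^{-}$-stable point $\llambda$ at $\infty$ and $\theta$-semistability is open, so the quasimap is nonsingular at $\infty$ and stable generically. Combined with the uniqueness from the first step, this establishes the bijection between $C_{\llambda}$ and degree data obeying 1 and 2.

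The main obstacle is the identical vanishing of the moment map for the constructed section, that is, verifying that the monomial factors (in the $\mathbb{P}^{1}$-coordinate) extracted from the two composites $X_{i-1}Y_{i-1}$ and $Y_{i}X_{i}$ at each box genuinely coincide, so that the relation $\mu_{\dv,\dw}(\llambda)=0$ propagates from $\infty$ to all of $\mathbb{P}^{1}$. This is a bookkeeping check on the degrees $d_{\square}$ along the two paths around each box of the Young diagram, and it is exactly here that conditions 1--2 are used, since they guarantee the relevant sections are honestly holomorphic and supported at $0$. The companion point requiring care is confirming that $J=0$ at the $\theta^{-}$-fixed point and that no component of $J$ can be switched on without violating either the $\hbar$-weight matching or nonsingularity at $\infty$.
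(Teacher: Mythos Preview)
The paper does not supply a proof of this proposition; it is simply quoted with a citation to \cite{dinksmir2} and no argument is given. Hence there is nothing in the present paper to compare your proposal against.

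Your sketch is the standard route to such a characterization and is sound in outline: split the tautological bundles equivariantly over $\mathbb{P}^{1}$ using the fixed-point weights (\ref{tbwts}), observe that the nonzero components of $I$, $X$, $Y$ at the $\theta^{-}$-fixed point $\llambda$ must lift to holomorphic $\mathbb{C}^{\times}_{q}$-eigensections of the corresponding line bundles, extract the inequalities $d_{\text{corner}}\geq 0$ and $d_{\square}\leq d_{\square'}$ for adjacent boxes, and then for the converse build the unique candidate quasimap and verify the moment-map equation and generic stability. The two places you single out as delicate---checking that $\mu_{\dv,\dw}=0$ holds identically on $\mathbb{P}^{1}$ for the constructed section, and confirming $J\equiv 0$ so that no $J$-component can be switched on equivariantly---are exactly where the work sits, and your plan for each is correct. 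One small sharpening: in the moment-map step the point is not merely that the two composites land in the same line bundle, but that on each diagonal summand the relevant path through the Young diagram closes up, so the net degree is zero and the resulting section of $\mathcal{O}$ is the constant determined by its value at $\infty$, which vanishes because $\mu_{\dv,\dw}(\llambda)=0$.
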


\begin{Definition}
    We call $C_{\llambda}$ the set of admissible degrees for $\llambda$.
\end{Definition}

\subsection{Preservation of vertex functions}

Recall the notation of section \ref{onestepsection}. We have $\Phi:\mathcal{M}(\dv,\dw)\hookrightarrow\mathcal{M}'(\dv',\dw')$ as in (\ref{onestep}). These are quiver varieties from a type $A$ quiver with $m$ vertices. Recall the definitions of $k$ and $n$ used there. There are tori $\bT$ and $\bT'$ that act on these two varieties and an embedding $\iota: \bT \to \bT'$. The map $\Phi$ is equivariant with respect to $\iota$ and hence induces a pullback
$$
\Phi^{*}: K_{\bT'\times\mathbb{C}^{\times}_{q}}(\mathcal{M}')_{loc} \to K_{\bT\times\mathbb{C}^{\times}_{q}}(\mathcal{M})_{loc}
$$

Our main theorem is the following.
\begin{Theorem}\label{mainthm}
Up to shifts of $z$, $\Phi^{*}$ preserves vertex functions. More specifically,
    $$
\Phi^{*}(V'(z))=V(\tilde{z})
    $$
    where $\tilde{z}$ stands for the shift $z_{j}\mapsto z_{j} q^{-1}$ for $k+1 \leq j <m$ and $z_{m} \mapsto z_{m} q^{n-1}$.
\end{Theorem}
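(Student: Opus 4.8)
The plan is to establish the identity one fixed point at a time. A class in localized equivariant $K$-theory is determined by its restrictions to the $\bT$-fixed points, which are indexed by $(\dv,\dw)$-tuples of partitions $\llambda$. For each such $\llambda$, Lemma \ref{fpcommute} gives $\Phi^{*}(V'(z))|_{\llambda}=\iota^{*}(V'(z)|_{\mmu})$, where $\mmu$ is the $(\dv',\dw')$-tuple attached to $\llambda$ in Definition \ref{fpmatch}. Hence the theorem reduces to the scalar identity $\iota^{*}(V'(z)|_{\mmu})=V(\tilde z)|_{\llambda}$, whose two sides are the explicit $q$-hypergeometric series of Theorem \ref{vertex}.

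Next I would expand both series. By Theorem \ref{vertex}, $V(\tilde z)|_{\llambda}$ is a sum over admissible degrees $\{d_{\square}\}\in C_{\llambda}$ and $V'(z)|_{\mmu}$ a sum over $\{e_{\square}\}\in C_{\mmu}$, the index sets being described by Proposition \ref{degrees}. The combinatorial input is the box correspondence of Definition \ref{fpmatch}: all boxes of $\mmu$ except those of the $n$ new length-one partitions $\mu^{m,\dw_m+j}=(\lambda^{k+1,1}_{j}+n-j)$ match boxes of $\llambda$ with identical tautological weight, each removed box of $\lambda^{k+1,1}$ reappears (after lengthening) as the weight-matching box of some $\mu^{m,\dw_m+j}$, and there remain exactly $\binom{n}{2}$ \emph{extra} boxes with no preimage. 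Using (\ref{tbwts}) and the substitution (\ref{torimap}) I would record that, after $\iota^{*}$, every extra box $\square$ acquires tautological weight $a_{\square}\hbar^{\delta^{\mmu}_{\square}}=\hbar^{-p}a_{k+1,1}$ for some $1\le p\le n$, while the new framings $c_{1},\dots,c_{n}$ specialize to $\hbar^{1-n}a_{k+1,1},\dots,a_{k+1,1}$.

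The decisive step is the vanishing and pinning. The building block satisfies $\left\{\hbar^{-1}\right\}_{d}=0$ for $d\ge 1$, because $(\hbar\cdot\hbar^{-1})_{d}=(1)_{d}$ contains the zero factor $1-q^{0}$. For each extra box $\square$ of weight $\hbar^{-p}a_{k+1,1}$, the framing factor $\left\{(a_{\square}/c)\,\hbar^{\delta^{\mmu}_{\square}}\right\}_{e_{\square}}$ taken against the new framing $c$ specializing to $\hbar^{1-p}a_{k+1,1}$ has argument $\hbar^{-1}$, so the corresponding summand of $\iota^{*}(V'(z)|_{\mmu})$ vanishes unless $e_{\square}=0$. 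Thus I would show that the only surviving summands are those with $e_{\square}=0$ on every extra box; combined with the admissibility inequalities of Proposition \ref{degrees}, the remaining degrees on the matched boxes range over exactly $C_{\llambda}$, giving a bijection of surviving index sets. I would then match a surviving summand with the corresponding summand of $V(\tilde z)|_{\llambda}$: setting the extra degrees to zero, the factors attached to the extra boxes either cancel in pairs against the inverse same-$\gamma$ factors or collapse to monomials in $q$ and $\hbar^{1/2}$, and the remaining factors biject with those of $V(\tilde z)|_{\llambda}$ by the weight correspondence. Collecting the residual monomials and reindexing $z^{e}=\prod_{\square}z_{\gamma(\square)}^{e_{\square}}$ against $z^{d}$ produces precisely the shift $z_{j}\mapsto q^{-1}z_{j}$ for $k+1\le j<m$ and $z_{m}\mapsto q^{n-1}z_{m}$.

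I expect the main obstacle to be exactly this last bookkeeping: checking that the pinning of all $\binom{n}{2}$ extra boxes is simultaneously consistent (in particular that each extra weight $\hbar^{-p}a_{k+1,1}$ indeed admits a matching new framing, i.e.\ $1\le p\le n$), that the surviving summands are genuinely finite so that the zeros just exploited are not cancelled by poles coming from the inverse same-$\gamma$ factors, and that the accumulated powers of $q$ assemble into the stated clean shift rather than a more complicated correction. Reducing to a single application of (\ref{onestep}) and organizing the computation along the rows of $\lambda^{k+1,1}$, together with standard $q$-Pochhammer shift identities for $\left\{x\right\}_{d}$, should make this tractable.
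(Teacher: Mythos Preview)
Your overall architecture matches the paper's exactly: reduce to fixed points via Lemma~\ref{fpcommute}, argue that after $\iota^{*}$ only summands with $e_{\square}=0$ on every extra box survive, verify that the surviving index set is precisely $C_{\llambda}$, and then match the remaining factors term by term to extract the $q$-shift. This is the right plan.

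There is, however, a genuine gap in your vanishing step. You propose to kill each extra box $\square$ using a \emph{framing} factor $\left\{(a_{\square}/c)\,\hbar^{\delta^{\mmu}_{\square}}\right\}_{e_{\square}}$ against a suitably chosen new framing $c\in\{c_{1},\dots,c_{n}\}$. But in the vertex formula of Theorem~\ref{vertex}, a box only couples to framings at its \emph{own} vertex $\gamma(\square)$; the new framings $c_{1},\dots,c_{n}$ all live at vertex $m$. Among the $\binom{n}{2}$ extra boxes, only the $n-1$ height-zero ones (one in each $\nu^{1},\dots,\nu^{n-1}$) have $\gamma(\square)=m$; all the others sit over vertices $k+2,\dots,m-1$ and simply do not see $c_{1},\dots,c_{n}$ through a $Y$-type factor. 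So your mechanism, as written, pins at most $n-1$ boxes, not $\binom{n}{2}$. The issue is not whether a framing with the right weight exists (your check $1\le p\le n$), but whether any factor with that argument is actually present in the product.

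The paper's Lemma~\ref{degvanish} achieves the full pinning using interaction factors among the $\nu^{j}$'s themselves rather than framings. One clean way to organize it is by induction on height: once the height-$(l-1)$ extra boxes are known to have degree zero, the $X$-type edge factor between $\square\in\nu^{j}$ at height $l$ and the height-$(l-1)$ box of $\nu^{j+1}$ specializes under $\iota^{*}$ to $\{1\}_{-e_{\square}}=\{\hbar^{-1}\}_{e_{\square}}q^{-e_{\square}}$, which vanishes for $e_{\square}>0$. You also glossed over a second point the paper treats explicitly: showing that the surviving degrees lie in $C_{\llambda}$ requires new inequalities between boxes of $\lambda^{k+1,1}$ lying in \emph{different} rows (hence different $\nu^{j}$'s in $\mmu$), which are not part of the $C_{\mmu}$ conditions and must again be forced by a vanishing argument. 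Once these two points are handled, the rest of your outline (cancellation of residual factors and identification of the $q$-shift) proceeds as in the paper.
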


Let 
$$
\iota^{*}: K_{\bT' \times \mathbb{C}^{\times}_{q}}(pt) \to K_{\bT\times\mathbb{C}^{\times}_{q}}(pt)
$$
be the induced map. Theorem \ref{mainthm} is equivalent to the following.

Let $\llambda \in \mathcal{M}^{\bT}$. Let $\mmu\in\mathcal{M}'^{\bT'}$ be the fixed point constructed in Definition \ref{fpmatch}.

In view of Lemma \ref{fpcommute}, the Theorem \ref{mainthm} is equivalent to the following.

\begin{Theorem}\label{mainthmloc}
    Let $\llambda \in \mathcal{M}^{\bT}$ and let $\mmu\in\mathcal{M}'^{\bT'}$ be the fixed point constructed in section \ref{fpmap}. Then
    $$
\iota^{*}(V'(z)|_{\mmu})=V(\tilde{z})|_{\llambda}
    $$
\end{Theorem}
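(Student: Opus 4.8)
The plan is to prove Theorem \ref{mainthmloc} by comparing the two localization sums term-by-term, using the explicit formula from Theorem \ref{vertex} together with the fixed-point correspondence of Definition \ref{fpmatch}. The core object is the map $\{d_\square\}_{\square \in \llambda} \mapsto \{d_\square\}_{\square \in \mmu}$ on admissible degrees, and I would first pin this down precisely. Since $\mmu$ is built from $\llambda$ by deleting the partition $\lambda^{k+1,1}$ and replacing it with the length-$1$ partitions $\mu^{m,\dw_m+j}=(\lambda^{k+1,1}_j+n-j)$, and leaving everything else essentially intact, there is a natural bijection between boxes of $\llambda$ and boxes of $\mmu$. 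I would use this to define the degree assignment on $\mmu$ and then verify, using Proposition \ref{degrees}, that admissible degrees for $\llambda$ correspond exactly to admissible degrees for $\mmu$ (the two conditions---nonnegativity and the monotonicity along adjacent boxes of the same partition with $\delta$ increasing by one---must transfer under the box bijection). The subtle point is that the boxes of the deleted partition $\lambda^{k+1,1}$ get redistributed into many single-box partitions at the last vertex, so the monotonicity constraints linking them can change character; checking that the admissible cones match is the first real obstacle.

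\textbf{Next}, with the degree bijection in hand, I would compare the summands factor by factor. Each summand of $V(\tilde z)|_{\llambda}$ and of $\iota^*(V'(z)|_{\mmu})$ is a product of $\{\,\cdot\,\}_d$ symbols indexed by pairs of boxes (for the edge and the self-terms) and by boxes paired with framings. Using formula (\ref{tbwts}) and the values $\gamma(\square),\delta^{\llambda}_\square$ versus $\gamma(\square),\delta^{\mmu}_\square$, I would show that after applying $\iota^*$---which sends $b,c,\hbar$ to the prescribed monomials in $a,\hbar$, in particular $c_j \mapsto \hbar^{j-n}a_{1,1}$ in the local model---each $\{\,\cdot\,\}$ factor appearing in $V'(z)|_{\mmu}$ either matches a corresponding factor of $V(z)|_{\llambda}$ or cancels. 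The key geometric input is Lemma \ref{fpcommute}: it guarantees that $\iota^*(\mathcal{V}_i'|_{\mmu}) = \Phi^*(\mathcal{V}_i')|_{\llambda}$, so the tautological weights entering the two formulas are literally identified under $\iota^*$. This reduces the whole identity to a bookkeeping statement about which $\{\,\cdot\,\}$ symbols occur with which arguments.

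\textbf{The main obstacle}, as the paper itself signals, is the ``extra'' factors in $V'(z)|_{\mmu}$ that have no counterpart in $V(z)|_{\llambda}$: these come from the newly added single-box partitions at the last vertex and their interactions (edge-factors connecting the chain $k+1,\dots,m$, self-factors within the enlarged $\dv'_i$, and framing-factors against the $\dw_m+n$ framings). I expect that these surplus factors are exactly the ones that must evaluate to $1$ or cancel in pairs after the specialization $\iota^*$. Concretely, I would show that for boxes created by the lengthening procedure, the arguments of the relevant $\{\,\cdot\,\}$ symbols specialize to values where $\{x\}_{d}=1$ (this happens when $d=0$, forced by the admissibility/degree bijection, since the filler boxes $\mathbb{C}^{i-1}$ carry degree zero) or where numerator and denominator symbols cancel against each other. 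Verifying this cancellation is where the ``complicated combinatorial expressions'' live, and it is the heart of the proof.

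\textbf{Finally}, I would track the overall monomial prefactors. The claimed shift $\tilde z$ (namely $z_j \mapsto z_j q^{-1}$ for $k+1 \le j < m$ and $z_m \mapsto z_m q^{n-1}$) and the factors $(-q/\hbar^{1/2})^d$ built into each $\{\,\cdot\,\}_d$ symbol must combine so that $z^{d}$ on the $\mmu$ side, after $\iota^*$ and the degree bijection, reproduces $\tilde z^{d}$ on the $\llambda$ side. I would isolate the powers of $q$ and $\hbar$ contributed by the surplus factors and show they assemble precisely into the prescribed shift of the Kähler parameters, completing the term-by-term match and hence the theorem.
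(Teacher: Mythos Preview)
Your overall strategy---term-by-term comparison of the localization sums of Theorem \ref{vertex}, tracking the $\{\,\cdot\,\}_d$ factors and the monomial in $z$ and $q$---matches the paper's. But there is a genuine gap in your first step, and it propagates.

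You propose a map $C_{\llambda}\to C_{\mmu}$ on admissible degrees (extending by $0$ on the filler boxes) and then assert that ``admissible degrees for $\llambda$ correspond exactly to admissible degrees for $\mmu$.'' This is false: $C_{\mmu}$ is strictly larger. First, nothing in Proposition \ref{degrees} forces the filler boxes of $\nu^{j}$ to have degree $0$; they sit at the low-$\delta$ end of a single-row partition, so any nonnegative nondecreasing sequence along that row is admissible. Second, even among $\mmu$-degrees with all filler boxes at $0$, the monotonicity constraints are \emph{weaker} than in $\llambda$: boxes in adjacent rows of $\lambda^{k+1,1}$ are linked by the condition $d_{\square}\le d_{\square'}$ in $\llambda$, but in $\mmu$ these same boxes live in \emph{different} partitions $\nu^{j}$ and $\nu^{j+1}$, so Proposition \ref{degrees} imposes no direct constraint between them. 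Hence your ``degree bijection'' does not exist, and the claim that $d=0$ on filler boxes is ``forced by the admissibility/degree bijection'' is circular.

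The paper runs the argument in the opposite direction. One starts from an arbitrary admissible $\{d_{\square}\}\in C_{\mmu}$ and proves (Lemma \ref{degvanish}) that if the coefficient $\iota^{*}(C^{\mmu}_{\{d_{\square}\}})$ is nonzero, then (i) all filler-box degrees vanish and (ii) the restricted tuple is admissible for $\llambda$. Both conclusions come not from combinatorics of the cones but from the specialization $\iota^{*}(c_{j})=\hbar^{j-n}a$: certain numerator factors in the localization term become $(1)_{d}$ or $(\hbar)_{d'-d}/(q)_{d'-d}$, forcing $d=0$ or $d'-d\ge 0$. Once this vanishing lemma is in place, the surviving terms of $\iota^{*}(V'(z)|_{\mmu})$ are indexed by $C_{\llambda}$, and the remainder of your outline (matching the three blocks of $\{\,\cdot\,\}$ factors, cancelling the surplus ones, and collecting the residual powers of $q$ into the shift $\tilde z$) is exactly what the paper does. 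So your plan is salvageable, but you must replace the claimed cone bijection with this vanishing argument.
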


Repeating the embedding procedure as described in \ref{manystepsection}, we obtain the following.

\begin{Corollary}
    Up to shifts of $z_{1},\ldots,z_{m}$ by powers of $q$, the vertex function of any type $A$ quiver variety can be obtained by a certain specialization of the equivariant parameters of the vertex function of the cotangent bundle of a partial flag variety.
\end{Corollary}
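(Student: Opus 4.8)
The plan is to deduce this Corollary from Theorem \ref{mainthm} by iterating it along the chain of one-step embeddings produced in Section \ref{manystepsection}. By Theorem \ref{globalphi}, for any $\dv,\dw$ there is a finite sequence of one-step embeddings
$$
\mathcal{M}(\dv,\dw) = \mathcal{M}_{0} \xrightarrow{\Phi_{0}} \mathcal{M}_{1} \xrightarrow{\Phi_{1}} \cdots \xrightarrow{\Phi_{N-1}} \mathcal{M}_{N},
$$
each of the form (\ref{onestep}), where $\mathcal{M}_{N}$ has all framings at the rightmost vertex. When $\theta=\theta^{-}$, the variety $\mathcal{M}_{N}$ is the cotangent bundle of a partial flag variety, as recalled in the introduction. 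Each step $\Phi_{a}$ carries its own inclusion of tori $\iota_{a}:\bT_{a}\hookrightarrow \bT_{a+1}$ of the form (\ref{torimap}), together with its own shift data (the integers playing the role of $k$ and $n$ for that step), so Theorem \ref{mainthm} applies verbatim to each $\Phi_{a}$.

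First I would set $\Phi = \Phi_{N-1}\circ\cdots\circ\Phi_{0}$ and $\iota = \iota_{N-1}\circ\cdots\circ\iota_{0}$, so that on localized equivariant $K$-theory $\Phi^{*} = \Phi_{0}^{*}\circ\cdots\circ\Phi_{N-1}^{*}$ and the induced parameter map $\iota^{*}$ is the corresponding composite of the specializations (\ref{torimap}). Writing $V_{a}$ for the vertex function of $\mathcal{M}_{a}$, Theorem \ref{mainthm} gives $\Phi_{a}^{*}(V_{a+1}(z)) = V_{a}(\tilde z^{(a)})$, where $\tilde z^{(a)}$ denotes a monomial substitution $z_{j}\mapsto z_{j}q^{c^{(a)}_{j}}$ for an explicit tuple of integers $c^{(a)}_{j}$ read off from the data of the $a$-th step.

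Then I would compose these identities from $a=N-1$ down to $a=0$. Since each $\tilde z^{(a)}$ is a monomial substitution in the Kähler variables, applying $\Phi_{a-1}^{*}$ to $V_{a}(\tilde z^{(a)})$ simply composes the two substitutions, adding the exponents of $q$. Hence $\Phi^{*}(V_{N}(z)) = V_{0}(\tilde z)$, where $\tilde z$ shifts each $z_{j}$ by the accumulated power of $q$ summed over all steps. On equivariant parameters $\Phi^{*}$ acts through $\iota^{*}$, which by construction of (\ref{torimap}) is exactly a specialization of the equivariant parameters of $\mathcal{M}_{N}$. Reading this identity as expressing $V_{0}$ in terms of $V_{N}$ proves the Corollary.

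The only real obstacle here is bookkeeping rather than any new idea: one must track the cumulative $q$-shifts and verify that the composite torus map $\iota$ remains a specialization in the required sense, which is clear since a composite of monomial specializations is again monomial. The cleanest way to organize the argument is to invoke Lemma \ref{fpcommute} at each step, reducing everything to the fixed-point statement of Theorem \ref{mainthmloc}, where the substitutions and parameter specializations are manifestly monomial and compose transparently.
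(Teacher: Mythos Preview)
Your proposal is correct and follows essentially the same approach as the paper: the Corollary is deduced by iterating Theorem \ref{mainthm} along the chain of one-step embeddings from Section \ref{manystepsection} (Theorem \ref{globalphi}), accumulating the $q$-shifts in the K\"ahler parameters and composing the monomial torus specializations $\iota^{*}$. The paper's own argument is just the single sentence ``Repeating the embedding procedure as described in \ref{manystepsection}, we obtain the following''; your write-up simply makes that iteration explicit.
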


We will prove Theorem \ref{mainthm} in section \ref{proof} by a careful analysis of the localization formula in Theorem \ref{vertex}.

\section{Proof of Theorem \ref{mainthmloc}}\label{proof}

\subsection{Setup}

We review the setup of section \ref{onestep}. Let $\dv,\dw  \in \mathbb{Z}^{m}_{\geq 0}$ and let $k<m$ be the maximal index such that $\dw_{k+1}\neq 0$. Define $n$ by $k+n=m$. Then
\begin{align*}
\dw &=(\dw_{1},\dw_{2},\ldots,\dw_{k},\dw_{k+1},0,\ldots,0,\dw_{k+n})\\
\dv'&=(\dv_{1},\dv_{2},\ldots,\dv_{k+1},\dv_{k+2}+1,\ldots,\dv_{k+n}+n-1) \\
\dw' &=(\dw_{1},\dw_{2},\ldots,\dw_{k+1}-1,0,\ldots,0,\dw_{k+n}+n)
\end{align*}
Let $\mathcal{M}:=\mathcal{M}_{-\theta^{+}}(\dv,\dw)$ and $\mathcal{M}':=\mathcal{M}_{-\theta^{+}}(\dv',\dw')$ and consider the embedding $\Phi: \mathcal{M} \hookrightarrow \mathcal{M}'$ and the embedding $\iota: \bT \to \bT'$. Let $\iota^*$ be the induced pullback on torus weights.

Denote the equivariant parameters of $\bT$ by $a_{i,j}$ where $1 \leq i \leq m$ and $1 \leq j \leq \dw_{i}$ and the equivariant parameters of $\bT'$ by $b_{i,j}$ where $1 \leq i \leq m$ and $1 \leq j \leq \dw'_{i}$. We will also denote $c_{j}:=b_{m,\dw_{m}+j}$ for $1 \leq j \leq n$. By (\ref{torimap}), the map $\iota^*$ is defined by 
\begin{align*}
    \iota^*(b_{i,j})=\begin{cases}
a_{i,j} & 1 \leq i \leq k \\
a_{k+1,j+1} & i=k+1 \text{ and } 1\leq j \leq \dw_{k+1}-1 \\
a_{m,j} & i=m \text{ and } 1 \leq j \leq \dw_{m}\\
\hbar^{j-\dw_{m}-n} a_{k+1,1} & i=m \text{ and }  \dw_{m}+1 \leq j \leq \dw_{m}+n
    \end{cases}
\end{align*}

We abbreviate $a:=a_{k+1,1}$ so that the last line reads $\iota^{*}(c_{j})=\hbar^{j-n} a$ for $1\leq j \leq n$.

Let $\llambda$ be a $(\dv,\dw)$-tuple of partitions and let $\mmu$ be as in Definition \ref{fpmatch}. To simplify notations below, we will denote $\lambda:=\lambda^{k+1,1}$ and 
$$
\nu:=(\nu^{1},\nu^{2},\ldots,\nu^{n}):=(\mu^{m,\dw_{m}+1},\ldots,\mu^{m,\dw_{m}+n})
$$
Since $\mu^{m,\dw_{m}+j}=(\lambda_{j}+n-j)$, we will canonically identify the Young diagram of $\lambda$ with a subset of the disjoint union of the Young diagrams of each $\mu^{m,\dw_{m}+j}$ for $1 \leq j \leq n$. By section \ref{fpmap}, $\mu^{i,j}=\lambda^{i,j}$ if $i \neq m$ or if $i=m$ and $j\leq \dw_{m}$. We identify such boxes $\square \in \llambda$ with their counterparts in $\mmu$. So, we have defined an inclusion $\llambda \subset \mmu$; for example, see Figure \ref{boxdegs}.  

\subsection{Comparison of degrees}

We denote by $V'_{\mmu}$ the restriction of the vertex function of $\mathcal{M}'$ to $\mmu$. We similarly write $V_{\llambda}$ for the restriction of the vertex function of $\mathcal{M}$ to $\llambda$. We will denote the coefficients of the vertex function $V'_{\mmu}$ by $C^{\mmu}_{\{d_{\square}\}}(b,\hbar)$, so that
$$
V'_{\mmu}=\sum_{\{d_{\square}\} \in C_{\mmu}} z^{d} C^{\mmu}_{\{d_{\square}\}}(b,\hbar)
$$
and similarly for $C^{\llambda}_{\{d_{\square}\}}(a,\hbar)$

Let $\{d_{\square}\}_{\square \in \mmu}$ be an admissible collection of degrees appearing in the vertex function $V'_{\mmu}$. By forgetting the degrees corresponding to boxes $\square \in \mmu \setminus \llambda$, we obtain a set $\{d_{\square}\}_{\square \in \llambda}$.

\begin{figure}[htbp]
    \centering
\begin{tikzpicture}[scale=0.8, roundnode/.style={circle,fill,inner sep=2.5pt},refnode/.style={circle,inner sep=0pt},squarednode/.style={rectangle,fill,inner sep=3pt}] 
\node (N0) at (-3,0){\ldots};
\node[roundnode,label=above:{2}] (N1) at (-2,0){};
\node[roundnode,label=above:{2}] (N2) at (-1,0){};
\node[roundnode,label=above:{1}] (N3) at (0,0) {};
\node[roundnode,label=above:{1}] (N4) at (1,0){};
\node[roundnode,label=above:{0}] (N5) at (2,0){};
\node[roundnode,label=above:{0}] (N6) at (3,0){};
\node[squarednode,label=below:{1}] (F1) at (-2,-1){};

\draw[thick, ->] (N0) -- (N1);
\draw[thick, ->] (N1) -- (N2);
\draw[thick, ->] (N2) -- (N3);
\draw[thick, ->] (N3) -- (N4);
\draw[thick, ->] (N4) -- (N5);
\draw[thick, ->] (N5) -- (N6);
\draw[thick, ->] (F1) -- (N1);

\draw[dotted, -](0,0)--(0,5);
\draw[dotted, -](1,0)--(1,5);
\draw[dotted, -](2,0)--(2,5);
\draw[dotted, -](3,0)--(3,5);
\draw[dotted, -](-1,0)--(-1,5);
\draw[dotted, -](-2,0)--(-2,5);
\draw[dotted, -](-3,0)--(-3,5);
\draw[dotted, -](-4,0)--(-4,5);

\begin{scope}[shift={(-2,0)}]
\draw[thick,-](0,0)--(-3,3)--(-2,4)--(-1,3)--(1,5)--(2,4)--(3,5)--(4,4)--(0,0);
\draw[thick,-](1,1)--(-2,4);
\draw[thick,-](2,2)--(0,4);
\draw[thick,-](3,3)--(2,4);
\draw[thick,-](-1,1)--(2,4);
\draw[thick,-](-2,2)--(0,4);
\end{scope}

\begin{scope}[shift={(8.1,0)}]

\node (N0) at (-3,0){\ldots};
\node[roundnode,label=above:{2}] (N1) at (-2,0){};
\node[roundnode,label=above:{3}] (N2) at (-1,0){};
\node[roundnode,label=above:{3}] (N3) at (0,0) {};
\node[roundnode,label=above:{3}] (N4) at (1,0){};
\node[roundnode,label=above:{4}] (N5) at (2,0){};
\node[roundnode,label=above:{5}] (N6) at (3,0){};
\node[squarednode,label=below:{6}] (F1) at (3,-1){};

\draw[thick, ->] (N0) -- (N1);
\draw[thick, ->] (N1) -- (N2);
\draw[thick, ->] (N2) -- (N3);
\draw[thick, ->] (N3) -- (N4);
\draw[thick, ->] (N4) -- (N5);
\draw[thick, ->] (N5) -- (N6);
\draw[thick, ->] (F1) -- (N6);

\draw[dotted, -](0,0)--(0,11);
\draw[dotted, -](1,0)--(1,11);
\draw[dotted, -](2,0)--(2,11);
\draw[dotted, -](3,0)--(3,11);
\draw[dotted, -](-1,0)--(-1,11);
\draw[dotted, -](-2,0)--(-2,11);
\draw[dotted, -](-3,0)--(-3,11);
\draw[dotted, -](-4,0)--(-4,11);

\begin{scope}[shift={(3,0)}]
\draw[thick,-,fill=gray,opacity=0.5](0,0)--(-5,5)--(-4,6)--(1,1)--(0,0);
\draw[thick,-](0,0)--(-8,8)--(-7,9)--(1,1)--(0,0);
\draw[thick,-](-1,1)--(0,2);
\draw[thick,-](-2,2)--(-1,3);
\draw[thick,-](-3,3)--(-2,4);
\draw[thick,-](-4,4)--(-3,5);
\draw[thick,-](-5,5)--(-4,6);
\draw[thick,-](-6,6)--(-5,7);
\draw[thick,-](-7,7)--(-6,8);
\end{scope}

\begin{scope}[shift={(3,2.2)}]
\draw[thick,-,fill=gray,opacity=0.5](0,0)--(-4,4)--(-3,5)--(1,1)--(0,0);
\draw[thick,-](0,0)--(-6,6)--(-5,7)--(1,1)--(0,0);
\draw[thick,-](-1,1)--(0,2);
\draw[thick,-](-2,2)--(-1,3);
\draw[thick,-](-3,3)--(-2,4);
\draw[thick,-](-4,4)--(-3,5);
\draw[thick,-](-5,5)--(-4,6);
\end{scope}

\begin{scope}[shift={(3,4.4)}]
\draw[thick,-,fill=gray,opacity=0.5](0,0)--(-3,3)--(-2,4)--(1,1)--(0,0);
\draw[thick,-](0,0)--(-5,5)--(-4,6)--(1,1)--(0,0);
\draw[thick,-](-1,1)--(0,2);
\draw[thick,-](-2,2)--(-1,3);
\draw[thick,-](-3,3)--(-2,4);
\draw[thick,-](-4,4)--(-3,5);
\end{scope}

\begin{scope}[shift={(3,6.6)}]
\draw[thick,-,fill=gray,opacity=0.5](0,0)--(-2,2)--(-1,3)--(1,1)--(0,0);
\draw[thick,-](0,0)--(-3,3)--(-2,4)--(1,1)--(0,0);
\draw[thick,-](-1,1)--(0,2);
\draw[thick,-](-2,2)--(-1,3);
\end{scope}

\begin{scope}[shift={(3,8.8)}]
\draw[thick,-,fill=gray,opacity=0.5](0,0)--(-1,1)--(0,2)--(1,1)--(0,0);
\end{scope}

\begin{scope}[shift={(3,11.2)}]
\node at (0,0){$\emptyset$};
\end{scope}

\end{scope}

\end{tikzpicture}

 \caption{The identification of boxes of $\llambda$ (left) with a subset of the boxes of $\mmu$ (right).}\label{boxdegs}
\end{figure}
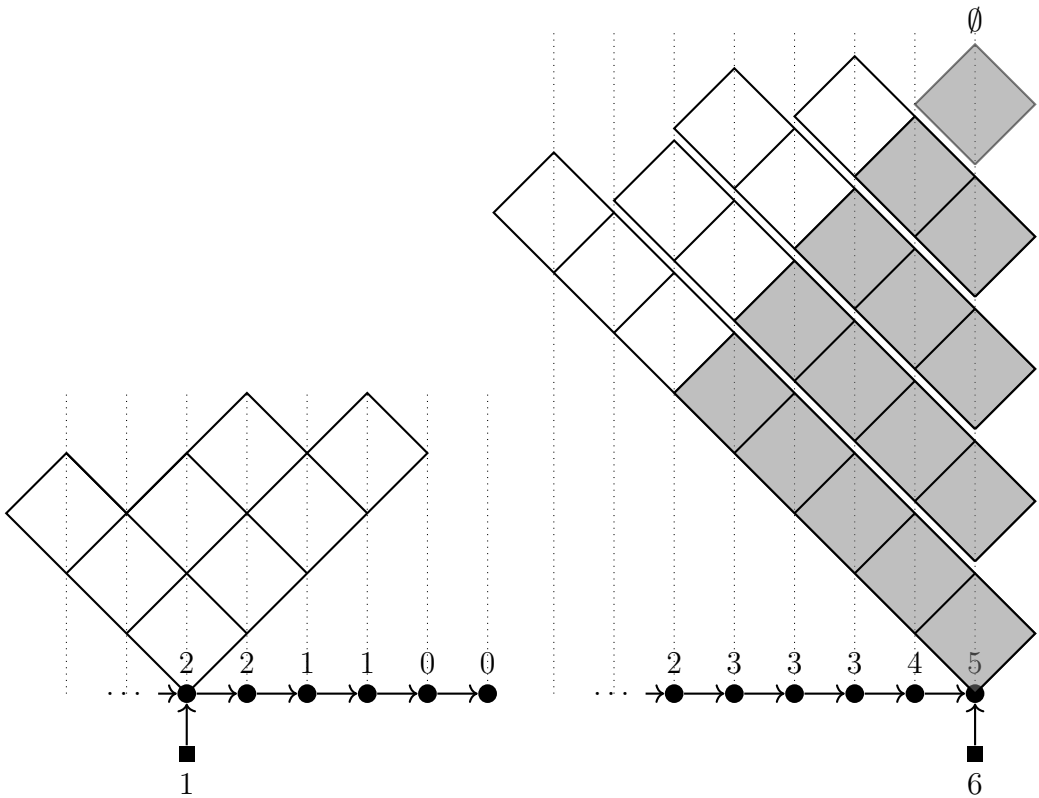

\begin{Lemma}\label{degvanish}
    If $\iota^*\left(C^{\mmu}_{\{d_{\square}\}}(b,\hbar)\right)\neq 0$, then $\{d_{\square}\}_{\square \in \llambda}$ is a collection of admissible degrees for $\llambda$ and $\{d_{\square}\}_{\square \in \mmu} \setminus \{d_{\square}\}_{\square \in \llambda}=\{0\}$.
\end{Lemma}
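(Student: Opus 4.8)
The plan is to substitute the explicit product formula of Theorem~\ref{vertex} for $C^{\mmu}_{\{d_{\square}\}}(b,\hbar)$ and track each $\{x\}$-factor through the specialization $\iota^{*}$. Writing the coefficient as the product of the edge factors (over $\gamma(\square')=\gamma(\square)+1$), the framing factors, and the inverse tautological factors (over $\gamma(\square)=\gamma(\square')$), the first observation is that $\iota^{*}$ sends every equivariant parameter to a monomial in $a$ and $\hbar$ with no $q$. Hence a factor $\{x\}_{d}$ can only vanish or become singular when $\iota^{*}(x)$ is a pure power of $\hbar$, and by the definition of $\iota^{*}$ this happens exactly when both boxes involved carry framings from $c_{1},\dots,c_{n}$, i.e. lie among the rows $\nnu=(\nu^{1},\dots,\nu^{n})$; all factors touching the remaining boxes of $\mmu$ match the corresponding factors of $V_{\llambda}$ verbatim (their framings map bijectively to distinct $a$'s) and are inert. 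Inspecting $\{x\}_{d}$ via Lemma~\ref{qpoch}, the two operative facts are: if $\iota^{*}(x)=\hbar^{-1}$ then $\{x\}_{d}$ vanishes precisely when $d\ge 1$, and if $\iota^{*}(x)=1$ then $\{x\}_{d}$ vanishes precisely when $d<0$. The whole analysis thus localizes to $\nnu$.

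For part (a), admissibility of $\{d_{\square}\}_{\square\in\llambda}$ (Proposition~\ref{degrees}) amounts to $d_{\square}\ge 0$, inherited verbatim from $C_{\mmu}$, together with monotonicity of degrees in the two lattice directions of each constituent partition. For every constituent other than $\lambda:=\lambda^{k+1,1}$ this is immediate, as those partitions sit inside $\mmu$ unchanged. The only subtle constituent is $\lambda$, whose boxes are spread over $\nu^{1},\dots,\nu^{n}$ under the identification $\llambda\subset\mmu$ of Section~\ref{fpmap}. Two boxes adjacent in a row of $\lambda$ lie in one $\nu^{r}$, so the horizontal inequality is exactly $\mmu$-admissibility of that row; two boxes adjacent in a column of $\lambda$, say $(r,c)$ and $(r+1,c)$, land in $\nu^{r}$ and $\nu^{r+1}$ with consecutive $\gamma$-values, and the corresponding cross-partition edge factor has argument specializing to $1$. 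Its non-vanishing forces the degree difference to be nonnegative, which is precisely the vertical inequality $d_{(r,c)}\le d_{(r+1,c)}$. This yields part (a).

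For part (b), I would force every box of $\nnu$ not coming from $\lambda$ to have degree $0$. The driving input is the framing factors at the last vertex: the corner box of $\nu^{r}$ (the unique box of $\nu^{r}$ with $\gamma=m$) has, among its framing factors, the one built from $c_{r+1}$, whose argument specializes to $\hbar^{-1}$ and which therefore vanishes as soon as that corner degree is $\ge 1$. Together with the within-row $\mmu$-monotonicity and the cross-partition edge factors (argument $\to 1$) between consecutive rows, this makes the corner degrees nondecreasing in $r$ and pins them; the non-corner extra boxes are then squeezed to $0$ by propagating along the staircase toward a corner. I would organize this as a downward induction on $r$, equivalently on the distance of an extra box from the last vertex.

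The main obstacle is that a numerator (edge or framing) zero can a priori be cancelled by a pole of a denominator (tautological) factor carrying the same linear form $1-\hbar^{s}q^{t}$, so the conclusions cannot be read off from individual factors; one must compute the net order of vanishing of each dangerous linear form along the specialization locus. Concretely, for each $r$ exactly two factors carry the form $1-\hbar\, c_{r}/c_{r+1}$: the framing factor of the corner of $\nu^{r}$, a zero when $d_{(1,1)\in\nu^{r}}\ge 1$, and the tautological factor pairing the corners of $\nu^{r}$ and $\nu^{r+1}$, a pole when $d_{(1,1)\in\nu^{r}}>d_{(1,1)\in\nu^{r+1}}$, so that the net order is
$$
\bigl[\,d_{(1,1)\in\nu^{r}}\ge 1\,\bigr]-\bigl[\,d_{(1,1)\in\nu^{r}}>d_{(1,1)\in\nu^{r+1}}\,\bigr].
$$
Once the corner monotonicity of the previous paragraph is in force the pole indicator vanishes, the net order reduces to $[\,d_{(1,1)\in\nu^{r}}\ge 1\,]$, and a surviving term must have $d_{(1,1)\in\nu^{r}}=0$; the same bookkeeping must then be checked for the vertical-condition edges of part (a), verifying that their zeros are likewise uncancelled. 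Carrying out this matching of zeros against poles cleanly, rather than relying on per-factor vanishing, is the crux of the argument.
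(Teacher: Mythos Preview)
Your strategy is the same as the paper's: isolate, inside the product formula of Theorem~\ref{vertex}, those $\{x\}_{d}$-factors whose argument specializes under $\iota^{*}$ to $\hbar^{-1}$ or to $1$, and read off the resulting constraints on the $d_{\square}$. For the admissibility inequalities in $\lambda$ you and the paper use literally the same factor (the edge factor between adjacent columns $\nu^{j}$ and $\nu^{j+1}$, the paper's $B_{\square,\square'}$); for forcing the extra degrees to vanish, you argue via the corner framing with $c_{r+1}$ together with propagation along the staircase, while the paper isolates a single family $A_{i,j,l}$ of factors and reads off $d_{\square}=0$ directly.

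The substantive difference is that you explicitly confront the cancellation issue---numerator zeros from edge/framing factors against denominator poles from tautological factors with the same specialized argument---whereas the paper's proof simply asserts ``for $\iota^{*}(A_{i,j,l})$ to be nonzero we must have $d_{\square}=0$'' without checking that this zero survives in the full product. Your outline of the net-order computation (corner monotonicity from the uncancelled $\{1\}_{d}$ edge zeros, then the framing zero at the corner of $\nu^{r}$ with its unique matching tautological pole neutralized by that monotonicity) is correct and makes the argument watertight. This is genuine added value over the paper's presentation; the paper's version works but leaves this verification to the reader.
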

\begin{proof}
    Suppose $\{d_{\square}\}_{\square \in \mmu}$ is a collection of admissible degrees for $\mmu$ and satisfies $\iota^*(C^{\mmu}_{\{d_{\square}\}}(b,\hbar)\neq 0$. We use the characterization of Proposition \ref{degrees}. We know a-priori that $d_{\square} \geq 0$. The rational function $C^{\mmu}_{\{d_{\square}\}}(b,\hbar)$ is a product of $q$-Pochammer terms. For each $1 \leq i,j \leq n$ and each box $\square$ in $\nu^{j}$ with height $l$, there is a term
    \begin{align*}
        A_{i,j,l}:=\frac{\left(\hbar \frac{c_{j}\hbar^{l}}{c_{i}}\right)_{d_{\square}}}{\left(q \frac{c_{j}\hbar^{l-1}}{c_{i}}\right)_{d_{\square}}}
    \end{align*}
 Then
    $$
\iota^{*}\left(A_{i,j,l}\right) = \frac{\left( \hbar^{j-i+l+1}\right)_{d_{\square}}}{\left(q \hbar^{j-i+l}\right)_{d_{\square}}}
    $$
If $j=i-p$ for $p >1$ and $\square \in \nu^{j}$ has height $p-1$, then the numerator is $(1)_{d'_{\square}}$. For $\iota^{*}\left( A_{i,j,l}\right)$ to be nonzero, we must have $d'_{\square}=0$ for any such box.

This shows that $d_{\square}=0$ whenever $\square$ is a box of height less than or equal to $n+j-1$ in $\nu^{j}$. In other words, $\{d_{\square}\}_{\square \in \mmu} \setminus \{d_{\square}\}_{\square \in \llambda}=\{0\}$.

Now suppose that $\square \in \lambda_{j}$ and $\square' \in \lambda_{j+1}$ satisfy $\delta^{\llambda}_{\square}=\delta^{\llambda}_{\square'}$, or equivalently, $\delta^{\mmu}_{\square}=\delta^{\mmu}_{\square'}+1$. We must show that $\iota^*\left(C_{\{d_{\square}\}}(b,\hbar)\right)\neq 0 \implies d_{\square'}-d_{\square}\geq 0$. A term in $C_{\{d_{\square}\}}(b,\hbar)$ is
$$
B_{\square,\square'}:=\frac{\left(\hbar \frac{c_{j+1}}{c_{j} \hbar }\right)_{d_{\square'}-d_{\square}}}{\left(q \frac{c_{j+1}}{c_{j}\hbar }\right)_{d_{\square'}-d_{\square}}}
$$
And
$$
\iota^*\left(B_{\square,\square'}\right)=\frac{\left( \hbar \right)_{d_{\square'}-d_{\square}}}{\left(q  \right)_{d_{\square'}-d_{\square}}}
$$
which is nonzero if and only if $d_{\square'} - d_{\square} \geq 0$.

A similar argument applies for $\square,\square' \in \lambda_{j}$ such that $\delta^{\llambda}_{\square'}=\delta^{\llambda}_{\square}+1$. Thus $\{d_{\square}\}_{\square \in \llambda}$ is a collection of admissible degrees for $\llambda$.
\end{proof}

\begin{Lemma}\label{boxwts}
    If $\square \in \lambda_{j}$, then $\delta^{\llambda}_{\square}=\delta^{\mmu}_{\square}+j-n$. If $\square \in \llambda \setminus \lambda$, then $\delta^{\llambda}_{\square}=\delta^{\mmu}_{\square}$.
\end{Lemma}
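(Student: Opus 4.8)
The plan is to read off the two heights directly from the explicit cell correspondence $\llambda \subset \mmu$ set up in Definition \ref{fpmatch} and section \ref{fpmap}, treating the two assertions separately. The second is pure bookkeeping, while the first amounts to locating the image in $\nu^j$ of a cell lying in the $j$-th row of $\lambda$ and computing the resulting shift in height.

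I would first dispose of the second claim. If $\square \in \llambda \setminus \lambda$, then $\square$ belongs to some $\lambda^{i,j'}$ with $(i,j') \neq (k+1,1)$, and by Definition \ref{fpmatch} this partition is transported to $\mmu$ verbatim --- as $\mu^{i,j'}$, $\mu^{k+1,j'-1}$, or $\mu^{m,j'}$, according to the value of $i$. Thus $\square$ occupies the same row and column of its ambient partition in $\mmu$ as it did in $\llambda$, and since $\delta_\square$ is intrinsic to the position of $\square$ inside its partition we get $\delta^\llambda_\square = \delta^\mmu_\square$ at once.

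For the first claim, fix $\square$ in the $j$-th row of $\lambda = \lambda^{k+1,1}$, occupying column $c$. By Definition \ref{fpmatch} the partition $\nu^j = \mu^{m,\dw_m+j}$ is the single row of length $\lambda_j + n - j$, and the correspondence of section \ref{fpmap} realizes the $j$-th row of $\lambda$ as the rightmost $\lambda_j$ cells of $\nu^j$, with $n-j$ additional cells on the left; equivalently, $\square$ is sent to column $c + (n-j)$ of $\nu^j$. The cleanest way to pin this shift down, and to keep the computation forced rather than guessed, is to track $\gamma$ rather than the raw height: since $\square$ sits above the vertex $\gamma(\square) = \text{cont}(\square)+(k+1)$ inside $\llambda$ and above the \emph{same} vertex $\gamma(\square) = \text{cont}(\square)+m$ inside $\mmu$, and $m = k+n$, its content must drop by exactly $n-1$, which is precisely the statement that the column index grows by $n-j$ in passing from row $j$ of $\lambda$ to row $1$ of $\nu^j$. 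This $\gamma$-compatibility is already built into the construction and is what underlies Proposition \ref{fpcorr} and Lemma \ref{fpcommute}, $\gamma$ being the datum that records which tautological bundle a cell contributes to. Reading off the $\hbar$-exponents $\delta^\llambda_\square$ and $\delta^\mmu_\square$ attached to $\square$ in the two characters \eqref{tbwts} and using that the column index increases by $n-j$, the height in $\mmu$ exceeds that in $\llambda$ by exactly this shift, giving $\delta^\llambda_\square = \delta^\mmu_\square + (j-n)$.

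The only genuine content, and the point to treat carefully, is the determination of this shift: I would derive it from preservation of $\gamma$ as above rather than from the raw Young-diagram height, so that the value $n-j$ is forced. It is also worth recording that every nonempty row of $\lambda$ does land in a legitimate $\nu^j$ with $1 \le j \le n$: the deepest cell of the first column of $\lambda$ has $\gamma = k + l(\lambda) \le m$, so $l(\lambda) \le n$, while the rows $j > l(\lambda)$ contribute only the empty part $\lambda_j = 0$, so the corresponding $\nu^j$ consists entirely of cells of $\mmu \setminus \llambda$, which are irrelevant to the present claim. With the shift in hand the remaining arithmetic is immediate.
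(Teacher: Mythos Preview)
Your proposal is correct and is essentially what the paper has in mind when it writes ``This follows straightforwardly from the definitions.'' Your argument is simply a fully unpacked version of that sentence: the second claim is immediate since the relevant partitions are carried verbatim, and for the first claim you correctly pin down that the cell $(j,c)$ of $\lambda$ lands in column $c+(n-j)$ of the single row $\nu^j$, from which the shift $\delta^{\mmu}_\square-\delta^{\llambda}_\square=n-j$ follows.

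One small remark on exposition: your derivation of the column shift via preservation of $\gamma$ is the right idea, but the sentence ``its content must drop by exactly $n-1$, which is precisely the statement that the column index grows by $n-j$'' elides a step. The content change $(1-c')-(j-c)$ equals $1-n$ only after using \emph{both} the row change $j\to 1$ and the column change $c\to c'$; solving gives $c'-c=n-j$. You clearly know this, but as written a reader could be confused about why a content drop independent of $j$ yields a $j$-dependent column shift. Spelling out that one line would make the argument airtight.
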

\begin{proof}
    This follows straightforwardly from the definitions.
\end{proof}

\subsection{Comparison of localization terms}

Next we start comparing $\iota^{*}\left(C^{\mmu}_{\{d_{\square}\}}(b,\hbar)\right)$ with $C^{\llambda}_{\{d_{\square}\}}(a,\hbar)$. We define $X^{\mmu},Y^{\mmu}$, and $Z^{\mmu}$ by the following:

\begin{multline*}
      C^{\mmu}_{\{d_{\square}\}}(b,\hbar)= \underbrace{\prod_{\substack{\square, \square' \in \mmu \\ \gamma(\square')=\gamma(\square)+1}} \left\{ \frac{b_{\square'}}{b_{\square}} \hbar^{\delta^{\mmu}_{\square'}-\delta^{\mmu}_{\square}} \right\}_{d_{\square'}-d_{\square}} }_{X^{\mmu}} \\
       \underbrace{\prod_{\square \in \mmu} \prod_{j=1}^{\dw_{\gamma(\square)}} \left\{\frac{b_{\square}}{b_{\gamma(\square),j}} \hbar^{\delta^{\mmu}_{\square}} \right\}_{d_{\square}}}_{Y^{\mmu}} 
       \underbrace{\prod_{\substack{\square, \square' \in \mmu \\ \gamma(\square')=\gamma(\square)}} \left\{\frac{b_{\square'}}{b_{\square}} \hbar^{\delta^{\mmu}_{\square'}-\delta^{\mmu}_{\square}} \right\}_{d_{\square'}-d_{\square}}^{-1}}_{Z^{\mmu}}
    \end{multline*}

\begin{Lemma}
    \begin{align}
       &\frac{ \iota^{*}\left(X^{\mmu}\right)}{X^{\llambda}}=\prod_{j=1}^{n} \prod_{\substack{\square \in (\nu \setminus \lambda)_{j} \\ \square' \notin \nu \\ \gamma(\square')=\gamma(\square)+1}} \left\{\frac{a_{\square'}}{a} \hbar^{\delta^{\mmu}_{\square'}-\delta^{\mmu}_{\square}-j+n} \right\}_{d_{\square'}} 
       \prod_{l=1}^{n} \prod_{\substack{\square \notin \nu \\ \square' \in (\nu\setminus \lambda)_{l} \\ \gamma(\square')=\gamma(\square)+1}} \left\{\frac{a}{a_{\square}} \hbar^{\delta^{\mmu}_{\square'}-\delta^{\mmu}_{\square}+l-n}\right\}_{-d_{\square}} \nonumber \\
       & \prod_{j,l=1}^{n} \prod_{\substack{\square \in (\nu\setminus\lambda)_{j} \\ \square' \in \lambda_{l} \\ \gamma(\square')=\gamma(\square)+1}} \left\{\hbar^{\delta^{\mmu}_{\square'}-\delta^{\mmu}_{\square} +l-j}\right\}_{d_{\square'}}
       \prod_{j,l=1}^{n} \prod_{\substack{\square \in \lambda_{j} \\ \square' \in (\nu \setminus\lambda)_{l} \\ \gamma(\square')=\gamma(\square)+1}} \left\{\hbar^{\delta^{\mmu}_{\square'}-\delta^{\mmu}_{\square}+l-j} \right\}_{-d_{\square}} \label{Xs}
    \end{align}
\end{Lemma}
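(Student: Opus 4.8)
The plan is to expand $\iota^{*}(X^{\mmu})$ as a product over pairs $(\square,\square')$ of boxes of $\mmu$ with $\gamma(\square')=\gamma(\square)+1$ and to sort these pairs according to the decomposition $\mmu=(\llambda\setminus\lambda)\sqcup\lambda\sqcup(\nu\setminus\lambda)$ coming from Definition \ref{fpmatch}: the boxes with $\square\notin\nu$ are the ``other'' boxes of $\llambda\setminus\lambda$, the boxes of $\lambda$ are those that have migrated to the last vertex inside $\nu$, and $\nu\setminus\lambda$ are the genuinely new boxes. Since the identification $\llambda\hookrightarrow\mmu$ is $\gamma$-preserving by construction, the adjacency condition $\gamma(\square')=\gamma(\square)+1$ is the same whether computed in $\llambda$ or in $\mmu$, so this index set splits cleanly and the pairs lying entirely in $\llambda$ are in bijection with the factors of $X^{\llambda}$.

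First I would dispose of the pairs $(\square,\square')$ with both boxes in $\llambda$, showing that after applying $\iota^{*}$ they reproduce $X^{\llambda}$ factor by factor and hence cancel against the denominator. For an ``other'' box one has $\iota^{*}(b_{\square})=a_{\square}$ and $\delta^{\llambda}_{\square}=\delta^{\mmu}_{\square}$, whereas for a box $\square\in\lambda_{j}$ one has $b_{\square}=c_{j}$, so $\iota^{*}(b_{\square})=\hbar^{j-n}a$, together with the height shift $\delta^{\llambda}_{\square}=\delta^{\mmu}_{\square}+j-n$ of Lemma \ref{boxwts}. The key point is that in each of the three subcases (both boxes ``other'', one of each, both in $\lambda$) the stray powers of $\hbar$ produced by $\iota^{*}(c_{j})=\hbar^{j-n}a$ are exactly compensated by the difference between the $\mmu$-heights and the $\llambda$-heights, so that $\iota^{*}\bigl(\{b_{\square'}/b_{\square}\,\hbar^{\delta^{\mmu}_{\square'}-\delta^{\mmu}_{\square}}\}_{d_{\square'}-d_{\square}}\bigr)$ collapses to $\{a_{\square'}/a_{\square}\,\hbar^{\delta^{\llambda}_{\square'}-\delta^{\llambda}_{\square}}\}_{d_{\square'}-d_{\square}}$; in the both-in-$\lambda$ case the factors of $a$ cancel entirely and one is left with a pure power of $\hbar$, again matching the corresponding factor of $X^{\llambda}$.

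What then survives in the ratio are precisely the pairs with at least one box in $\nu\setminus\lambda$. Invoking Lemma \ref{degvanish}, every new box $\square\in\nu\setminus\lambda$ has $d_{\square}=0$, which simplifies each subscript $d_{\square'}-d_{\square}$ to either $d_{\square'}$ or $-d_{\square}$ and forces the pairs with \emph{both} boxes new to contribute $\{\,\cdot\,\}_{0}=1$, so that they drop out. Labelling the four products on the right-hand side of (\ref{Xs}) by $P_{1},P_{2},P_{3},P_{4}$ from left to right, the four remaining families of pairs --- (new, other), (other, new), (new, special), and (special, new) --- are evaluated under $\iota^{*}$ using $\iota^{*}(b_{\square})=a_{\square}$ on other boxes and $\iota^{*}(c_{j})=\hbar^{j-n}a$ on boxes of $\nu^{j}$, and a direct substitution yields exactly $P_{1},P_{2},P_{3},P_{4}$; in the last two families the variable $a$ cancels and only powers of $\hbar$ remain. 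I expect the main obstacle to be the height bookkeeping in the second step: one must carefully track $\delta^{\mmu}$ against $\delta^{\llambda}$ through $\iota^{*}(c_{j})=\hbar^{j-n}a$ and verify, case by case, that the $\hbar$-exponents cancel so that the both-in-$\llambda$ pairs match $X^{\llambda}$ on the nose. Once this cancellation is established, the identification of the leftover pairs with $P_{1},\dots,P_{4}$ is routine.
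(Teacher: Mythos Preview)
Your proposal is correct and follows essentially the same approach as the paper: both split $X^{\mmu}$ into factors indexed by adjacent pairs $(\square,\square')$, then case-split according to whether each box lies in $\nu\setminus\lambda$, in $\lambda$, or outside $\nu$, using Lemma~\ref{boxwts} and $\iota^{*}(c_{j})=\hbar^{j-n}a$ to match the both-in-$\llambda$ pairs with $X^{\llambda}$ and Lemma~\ref{degvanish} to simplify the subscripts on the remaining pairs. The only cosmetic difference is the grouping order---you first isolate all both-in-$\llambda$ pairs and then treat the four ``new'' families, whereas the paper runs through the four cases $(\notin\nu,\notin\nu)$, $(\in\nu,\notin\nu)$, $(\notin\nu,\in\nu)$, $(\in\nu,\in\nu)$ and subdivides each into $\lambda$ versus $\nu\setminus\lambda$---but the underlying computation is identical.
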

\begin{proof}
       We split the term as
    $$
X^{\mmu}=\prod_{\square, \square' \in \mmu} X^{\mmu}_{\square,\square'}, \quad X^{\mmu}_{\square,\square'}=\left\{ \frac{b_{\square'}}{b_{\square}} \hbar^{\delta^{\mmu}_{\square'}-\delta^{\mmu}_{\square}} \right\}_{d_{\square'}-d_{\square}} 
$$
and consider $\iota^*(X^{\mmu})$. Assuming that $\gamma(\square')=\gamma(\square)+1$, we break the possibilities into four cases:
\begin{enumerate}
    \item Suppose $\square,\square' \notin \nu$. By Lemma \ref{boxwts}, we have 
    $$
    \iota^{*}(X^{\mmu}_{\square,\square'})=\left\{\frac{a_{\square'}}{a_{\square}} \hbar^{\delta^{\llambda}_{\square'}-\delta^{\llambda}_{\square}} \right\}_{d_{\square'}-d_{\square}}=X^{\llambda}_{\square,\square'}
    $$
    \item Suppose $\square' \notin \nu$ and $\square \in \nu^{j}$. If $\square \in \lambda_{j}\subset \nu^{j}$, then 
    \begin{align*}
\iota^{*}(X^{\mmu}_{\square,\square'})&= \left\{\frac{a_{\square'}}{a} \hbar^{n-j+\delta^{\mmu}_{\square'}-\delta^{\mmu}_{\square}} \right\}_{d_{\square'}-d_{\square}} \\
&=\left\{\frac{a_{\square'}}{a} \hbar^{\delta^{\llambda}_{\square'}-\delta^{\llambda}_{\square}} \right\}_{d_{\square'}-d_{\square}} \\
&= X^{\llambda}_{\square,\square'}
        \end{align*}
        where the second equality follows from Lemma \ref{boxwts}. If $\square \in   (\nu \setminus \lambda)_{j}$, then we get
        \begin{align*}
            \iota^{*}(X^{\mmu}_{\square,\square'})= \left\{\frac{a_{\square'}}{a} \hbar^{\delta^{\mmu}_{\square'}-\delta^{\mmu}_{\square}-j+n} \right\}_{d_{\square'}}
        \end{align*}
        since $d_{\square}=0$.
        
        \item Similarly, if $\square \notin \nu$ and $\square' \in \nu^{l}$, we obtain either $X^{\llambda}_{\square,\square'}$ or the extra terms
        \begin{align*}
           \iota(X^{\mmu}_{\square,\square'})=\left\{\frac{a}{a_{\square}} \hbar^{l-n+\delta^{\mmu}_{\square'}-\delta^{\mmu}_{\square}} \right\}_{-d_{\square}} 
        \end{align*}
        which arise from the case when $\square' \in (\nu\setminus\lambda)_{l}$.
        
        \item Suppose $\square \in \nu^{j}$ and $\square' \in \nu^{l}$. If $\square \in \lambda_{j} \subset \nu^{j}$ and $\square' \in \lambda_{l} \subset \nu^{l}$, then
        \begin{align*}
            \iota^{*}(X^{\mmu}_{\square,\square'})&=\left\{\hbar^{l-j+\delta^{\mmu}_{\square'}-\delta^{\mmu}_{\square}}\right\}_{d_{\square'}-d_{\square}}  \\
            &=\left\{\hbar^{\delta^{\llambda}_{\square'}-\delta^{\llambda}_{\square}} \right\}_{d_{\square'}-d_{\square}} \\
            &= X^{\llambda}_{\square,\square'}
        \end{align*}
        If $\square \in (\nu\setminus \lambda)_{j}$ and $\square' \in \lambda_{l} \subset \nu_{l}$, then 
        \begin{align*}
            \iota^{*}(X^{\mmu}_{\square,\square'})=\left\{\hbar^{l-j+\delta^{\mmu}_{\square'}-\delta^{\mmu}_{\square}} \right\}_{d_{\square'}}
        \end{align*}
        If $\square' \in (\nu\setminus\lambda)_{l}$ and $\square \in \lambda_{j}$, then 
        \begin{align*}
           \iota^{*}(X^{\mmu}_{\square,\square'})=\left\{\hbar^{l-j+\delta^{\mmu}_{\square'}-\delta^{\mmu}_{\square}}\right\}_{-d_{\square}}
        \end{align*}
        If $\square' \in (\nu\setminus\lambda)_{l}$ and $\square \in (\nu\setminus\lambda)_{j}$, then $d_{\square'}=d_{\square}=0$. So $\iota^{*}(X^{\mmu}_{\square,\square'})=1$.
    
\end{enumerate}
\end{proof}

\begin{Lemma}
     \begin{align}
             &\frac{ \iota^{*}\left(Y^{\mmu}\right)}{Y^{\llambda}}=\prod_{j=1}^{n} \prod_{\substack{\square \notin \nu \\ \gamma(\square)=m}} \left\{\frac{a_{\square}}{a} \hbar^{n-j}\right\}_{d_{\square}}  \prod_{j,l=1}^{n}\prod_{\substack{\square \in \lambda_{l}  \\ \gamma(\square)=m}} \left\{\hbar^{l-j} \right\}_{d_{\square}} \label{Ys}
    \end{align}
\end{Lemma}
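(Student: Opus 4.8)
The plan is to mirror the proof just given for $X^\mmu$: factor $Y^\mmu$ into single framing contributions
$$
Y^\mmu_{\square,j}=\left\{\frac{b_\square}{b_{\gamma(\square),j}}\hbar^{\delta^\mmu_\square}\right\}_{d_\square},
$$
indexed by a box $\square\in\mmu$ and a framing $j$ at the vertex $\gamma(\square)$, apply $\iota^*$ to each factor, and compare with the corresponding factor of $Y^\llambda$. Two structural inputs should do most of the work. First, by Lemma \ref{degvanish} every box $\square\in\nu\setminus\lambda$ carries $d_\square=0$ wherever $\iota^*(C^\mmu_{\{d_\square\}})\neq 0$, so all of its factors are $\{\cdots\}_0=1$ and may simply be dropped. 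Second, Lemma \ref{boxwts} lets me trade each exponent $\delta^\mmu_\square$ for $\delta^\llambda_\square$ together with an explicit $\hbar$-shift that records whether $\square$ lies in the removed partition $\lambda$ or in $\llambda\setminus\lambda$.

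Once the extra boxes are discarded I would organize the remaining comparison \emph{by framing} rather than box by box. For fixed $i$ and $j$, the product of $Y^\mmu_{\square,j}$ over all $\square$ with $\gamma(\square)=i$ depends only on the tautological character $\mathcal{V}'_i|_\mmu$ and the parameter $b_{i,j}$, and Lemma \ref{fpcommute} gives $\iota^*(\mathcal{V}'_i|_\mmu)=\mathcal{V}_i|_\llambda$ up to the degree-zero contributions of $\nu\setminus\lambda$. Thus $\iota^*$ of the framing factor at $(i,j)$ collapses to the same product over $\mathcal{V}_i|_\llambda$, but with parameter $\iota^*(b_{i,j})$. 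Using the explicit formula for $\iota^*$, the framings at the vertices $i\le k$ and the first $\dw_m$ framings at the last vertex are sent to the identical parameters $a_{i,j}$, so those factors reproduce $Y^\llambda$ and cancel. The only framings without an exact partner are the single missing framing $a=a_{k+1,1}$ at vertex $k+1$ and the $n$ new framings $c_1,\dots,c_n$ at vertex $m$, where $\iota^*(c_j)=\hbar^{j-n}a$.

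What then remains is to expand these two discrepancies and verify they assemble into the two claimed products. Each new framing $c_j$ contributes, over the boxes at the last vertex, a factor with $a$-part $a_\square/a$; separating the boxes lying in $\lambda$ — where $a_\square=a$, so the rational part collapses and only a pure power of $\hbar$ indexed by the row $l$ survives, giving $\{\hbar^{l-j}\}$ — from the boxes $\square\notin\nu$ — giving $\{\tfrac{a_\square}{a}\hbar^{n-j}\}$ — should produce exactly the stated split. I expect the real difficulty to be this final bookkeeping. One must track precisely which boxes of $\lambda$ migrate to which $\nu^l$, confirm that the deficit of the framing $a$ at vertex $k+1$ is genuinely reconciled by that migration (and is not instead absorbed into the companion $X$ and $Z$ factors), and check that the powers coming from $\iota^*(c_j)=\hbar^{j-n}a$ combine with the height shifts of Lemma \ref{boxwts} to yield $\hbar^{n-j}$ and $\hbar^{l-j}$ exactly, with no residual $\hbar^{\delta}$. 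Getting these exponents to match on the nose, rather than up to a shift, is the step I would treat most carefully.
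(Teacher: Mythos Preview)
Your plan is essentially the paper's: factor $Y^{\mmu}=\prod_{\square,j}Y^{\mmu}_{\square,j}$, discard every factor with $\square\in\nu\setminus\lambda$ using $d_\square=0$, and use Lemma~\ref{boxwts} to trade $\delta^{\mmu}$ for $\delta^{\llambda}$. The paper then runs through exactly the cases you list---old framings at vertices $i\le k$, the first $\dw_m$ framings at $i=m$, and the $n$ new framings $c_1,\dots,c_n$---and the two products on the right-hand side are precisely the contributions of the $c_j$ (your case (ii)), split according to whether the box at vertex $m$ lies in $\lambda$ or not.

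The one place you hesitate is exactly where the paper is sloppy, and your parenthetical alternative is the correct one. The deficit from the missing framing $a=a_{k+1,1}$ at vertex $k+1$ is \emph{not} reconciled inside the $Y$-computation. If you carry your plan through literally you will find that $\iota^*(Y^{\mmu})$ reproduces every factor of $Y^{\llambda}$ \emph{except} the product $\prod_{\gamma(\square)=k+1}\bigl\{\tfrac{a_\square}{a}\hbar^{\delta^{\llambda}_\square}\bigr\}_{d_\square}$, so the honest ratio $\iota^*(Y^{\mmu})/Y^{\llambda}$ carries the inverse of this product on top of the two displayed ones. The paper's proof of the lemma never mentions this factor; it is instead recovered in the proof of the final theorem, where the ``missing framing terms in $V_{\llambda}$'' are extracted from the $X^{\mmu}$ side (from the box $\square'\in(\nu\setminus\lambda)_1$ with $\gamma(\square')=k+2$). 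So do not try to make the $a_{k+1,1}$ deficit cancel inside $Y$: just compute the $c_j$-contributions, which is all the stated right-hand side records, and leave the missing framing to the $X$-lemma.
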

\begin{proof}
    
We split $Y^{\mmu}$ as
$$
Y^{\mmu}=\prod_{\square \in \mmu} \prod_{j=1}^{\dw_{\gamma(\square)}} Y^{\mmu}_{\square,j}, \quad Y^{\mmu}_{\square,j}= \left\{\frac{b_{\square}}{b_{\gamma(\square),j}} \hbar^{\delta^{\mmu}_{\square}} \right\}_{d_{\square}}
$$

Assuming that $\square \in \mmu$ and $1 \leq j \leq \dw_{\gamma(\square)}$, we have five possibilities:
\begin{enumerate}
    \item Suppose $\square \notin \nu$. Suppose also that either $\gamma(\square)\neq m$ or $\gamma(\square)=m$ and $1\leq j \leq \dw_{m}$. Then
    $$
\iota^{*}(Y^{\mmu}_{\square,j})=\left\{\frac{a_{\square}}{a_{\gamma(\square),j}} \hbar^{\delta^{\llambda}_{\square}} \right\}_{d_{\square}} = Y^{\llambda}_{\square,j}
    $$
      \item Suppose $\square \notin \nu$. Suppose also that $\gamma(\square)=m$ and $\dw_{m}+1\leq j \leq \dw_{m}+n$, and let $i=j-\dw_{m}$. We necessarily have $\delta^{\mmu}_{\square}=\delta^{\llambda}_{\square}=0$. Then
    \begin{align}
\iota^{*}(Y^{\mmu}_{\square,j})=\left\{\frac{a_{\square}}{a}\hbar^{n-i} \right\}_{d_{\square}}
    \end{align}
    \item Suppose $\square \in \lambda_{l} \subset \nu^{l}$. Suppose also that $\gamma(\square)\neq m$ or $\gamma(\square)=m$ and $1 \leq j \leq \dw_{m}$. Then 
    \begin{align*}
\iota^{*}(Y^{\mmu}_{\square,j})&=\left\{\frac{a}{a_{\gamma(\square),j}} \hbar^{l-n+\delta^{\mmu}_{\square}} \right\}_{d_{\square}} \\
&=\left\{\frac{a}{a_{\gamma(\square),j}} \hbar^{\delta^{\llambda}_{\square}}  \right\}_{d_{\square}} \\
&= Y^{\llambda}_{\square,j}
    \end{align*}
    where the second equality follows by Lemma \ref{boxwts}. 
    \item Suppose $\square \in \lambda_{l}\subset \nu^{l}$ and $\gamma(\square)=m$. Such a box must satisfy $\delta^{\mmu}_{\square}=\delta^{\llambda}_{\square}=0$. Suppose also that $j$ satisfies $\dw_{m}+1 \leq j \leq \dw_{m}+n$. Let $i=j-\dw_{m}$. Then 
    \begin{align}
        \iota^{*}(Y^{\mmu}_{\square,j})&=\left\{\hbar^{l-i} \right\}_{d_{\square}} 
    \end{align}
    \item The terms from $\square \in (\nu\setminus\lambda)_{l}$ are all $1$, since $d_{\square}=0$ for such a box.
\end{enumerate}

\end{proof}

\begin{Lemma}
    \begin{align}
       &\frac{ \iota^{*}\left(Z^{\mmu}\right)}{Z^{\llambda}}=\prod_{j=1}^{n} \prod_{\substack{\square \in (\nu \setminus \lambda)_{j} \\ \square' \notin \nu \\ \gamma(\square')=\gamma(\square)}} \left\{\frac{a_{\square'}}{a} \hbar^{\delta^{\mmu}_{\square'}-\delta^{\mmu}_{\square}-j+n} \right\}_{d_{\square'}}^{-1} 
       \prod_{j=1}^{n} \prod_{\substack{\square' \in (\nu \setminus \lambda)_{l} \\ \square \notin \nu \\ \gamma(\square')=\gamma(\square)}} \left\{\frac{a}{a_{\square}} \hbar^{\delta^{\mmu}_{\square'}-\delta^{\mmu}_{\square}+l-n} \right\}_{-d_{\square}}^{-1} \nonumber \\
     &  \prod_{j,l=1}^{n} \prod_{\substack{\square \in (\nu\setminus \lambda)_{j} \\ \square' \in \lambda_{l} \\ \gamma(\square')=\gamma(\square)}} \left\{ \hbar^{l-j+\delta^{\mmu}_{\square'}-\delta^{\mmu}_{\square}}\right\}_{d_{\square'}}^{-1} 
     \prod_{j,l=1}^{n} \prod_{\substack{\square \in \lambda_{j} \\ \square' \in (\nu \setminus \lambda)_{l} \\ \gamma(\square')=\gamma(\square)}} \left\{\hbar^{l-j +\delta^{\mmu}_{\square'}-\delta^{\mmu}_{\square}} \right\}_{-d_{\square}}^{-1} \label{Zs}
    \end{align}
\end{Lemma}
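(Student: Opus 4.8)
The plan is to mirror the proof of the preceding lemma for $X^{\mmu}$, since $Z^{\mmu}$ has the identical structure except that the product ranges over pairs $(\square,\square')$ with $\gamma(\square')=\gamma(\square)$ rather than $\gamma(\square')=\gamma(\square)+1$, and each factor is inverted. First I would write $Z^{\mmu}=\prod_{\square,\square'}Z^{\mmu}_{\square,\square'}$ with $Z^{\mmu}_{\square,\square'}=\left\{\frac{b_{\square'}}{b_{\square}}\hbar^{\delta^{\mmu}_{\square'}-\delta^{\mmu}_{\square}}\right\}_{d_{\square'}-d_{\square}}^{-1}$, the product taken over pairs with $\gamma(\square')=\gamma(\square)$, and then compute $\iota^{*}$ factor by factor.

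The evaluation of $\iota^{*}$ on each factor requires only two inputs: the explicit description of $\iota^{*}$ on equivariant parameters, namely $\iota^{*}(b_{\square})=a_{\square}$ when $\square\notin\nu$ and $\iota^{*}(b_{\square})=\iota^{*}(c_{j})=\hbar^{j-n}a$ when $\square\in\nu^{j}$; and Lemma \ref{boxwts}, which converts $\delta^{\mmu}$ to $\delta^{\llambda}$. I would then run the same four-case split as in the $X^{\mmu}$ computation according to whether $\square$ and $\square'$ lie in $\nu$: (1) both outside $\nu$; (2) $\square'\notin\nu$, $\square\in\nu^{j}$; (3) $\square\notin\nu$, $\square'\in\nu^{l}$; (4) both in $\nu$. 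In every subcase where the relevant boxes lie in $\lambda_{\bullet}\subset\nu^{\bullet}$, the factor $\hbar^{j-n}$ (resp. $\hbar^{l-n}$) coming from $\iota^{*}(c_{\bullet})$ cancels the shift in Lemma \ref{boxwts}, so that $\iota^{*}(Z^{\mmu}_{\square,\square'})=Z^{\llambda}_{\square,\square'}$ and these factors cancel in the ratio.

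The surviving factors are exactly those in which at least one of $\square,\square'$ lies in $\nu\setminus\lambda$. For such a box Lemma \ref{degvanish} gives $d_{\square}=0$, so the degree difference $d_{\square'}-d_{\square}$ collapses to $d_{\square'}$ (when the $\nu\setminus\lambda$ box is $\square$) or to $-d_{\square}$ (when it is $\square'$); tracking this across the four cases produces precisely the four products on the right-hand side of (\ref{Zs}), with the overall inverse inherited from $Z^{\mmu}$. When both boxes lie in $\nu\setminus\lambda$, both degrees vanish and the factor is $1$, contributing nothing.

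The main obstacle is purely bookkeeping: in each case one must keep careful track of which box carries the possibly nonzero degree and which is forced to have degree $0$, so that the collapse $d_{\square'}-d_{\square}\mapsto d_{\square'}$ or $-d_{\square}$ is applied to the correct factor and the exponents $\hbar^{l-j+\delta^{\mmu}_{\square'}-\delta^{\mmu}_{\square}}$ (resp. with the additional shifts by $n-j$ and $l-n$ when only one box lies in $\nu$) are recorded consistently with the inversion. Since the combinatorial structure is identical to the $X^{\mmu}$ case, no new geometric input is needed beyond Lemmas \ref{boxwts} and \ref{degvanish} and the formula for $\iota^{*}$.
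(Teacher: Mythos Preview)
Your proposal is correct and follows exactly the same strategy as the paper: factor $Z^{\mmu}$ into $Z^{\mmu}_{\square,\square'}$, apply the four-case split according to whether each of $\square,\square'$ lies in $\nu$, use Lemma \ref{boxwts} to match factors with $Z^{\llambda}_{\square,\square'}$ when the relevant boxes lie in $\lambda$, and use Lemma \ref{degvanish} to collapse the degree indices when a box lies in $\nu\setminus\lambda$. The paper's proof carries out precisely this case analysis with the same conclusions in each subcase.
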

\begin{proof}
We write 
$$
Z^{\mmu}=\prod_{\substack{\square,\square' \in \mmu \\ \gamma(\square')=\gamma(\square)}} Z^{\mmu}_{\square,\square'}, \quad Z^{\mmu}_{\square,\square'}= \left\{\frac{b_{\square'}}{b_{\square}} \hbar^{\delta^{\mmu}_{\square'}-\delta^{\mmu}_{\square}} \right\}_{d_{\square'}-d_{\square}}^{-1}
$$
Assuming that $\gamma(\square)=\gamma(\square')$, we break the possibilities into four cases:
\begin{enumerate}
    \item Suppose that $\square,\square' \notin \nu$. Then $\iota^{*}(Z^{\mmu}_{\square,\square'})=Z^{\llambda}_{\square,\square'}$.
    \item Suppose $\square \in \nu^{j}$ and $\square' \notin \nu$. Then
    \begin{align*}
       \iota^{*}(Z^{\mmu}_{\square,\square'})= \left\{ \frac{a_{\square'}}{a} \hbar^{\delta^{\mmu}_{\square'}-\delta^{\mmu}_{\square}+n-j}\right\}_{d_{\square'}-d_{\square}}^{-1}
    \end{align*}
    If $\square \in \lambda_{j} \subset \nu_{j}$, then
    \begin{align*}
        \iota^{*}(Z^{\mmu}_{\square,\square'})= \left\{\frac{a_{\square'}}{a} \hbar^{\delta^{\llambda}_{\square'}-\delta^{\llambda}_{\square}}\right\}_{d_{\square'}-d_{\square}}^{-1} = Z^{\llambda}_{\square,\square'}
    \end{align*}
    If $\square \in (\nu\setminus\lambda)_{j}$, then
    \begin{align*}
       \iota^{*}(Z^{\mmu}_{\square,\square'})=\left\{\frac{a_{\square'}}{a}\hbar^{\delta^{\mmu}_{\square'}-\delta^{\mmu}_{\square}+n-j}\right\}_{d_{\square'}}^{-1}
    \end{align*}
    since $d_{\square}=0$.
    \item Similarly, if $\square' \in (\nu\setminus\lambda)_{l}$ and $\square \notin \nu$ then
    \begin{align*}
         \iota^{*}(Z^{\mmu}_{\square,\square'})= \left\{\frac{a}{a_{\square}} \hbar^{\delta^{\mmu}_{\square'}-\delta^{\mmu}_{\square}+l-n} \right\}_{-d_{\square}}^{-1}
    \end{align*}
    \item Suppose $\square \in \nu^{j}$ and $\square' \in \nu^{l}$. Then 
    \begin{align*}
        \iota^{*}(Z^{\mmu}_{\square,\square'})=\left\{\hbar^{l-j+\delta^{\mmu}_{\square'}-\delta^{\mmu}_{\square}} \right\}_{d_{\square'}-d_{\square}}^{-1} 
    \end{align*}
    If $\square \in \lambda_{j}$ and $\square' \in \lambda_{l}$, then $\iota^{*}(Z^{\mmu}_{\square,\square'})=Z^{\llambda}_{\square,\square'}$, and similarly if we switch the roles of $\square$ and $\square'$.
    
    If $\square \in (\nu\setminus \lambda)_{j}$ and $\square'\in \lambda_{l}$, then
    \begin{align*}
       \iota^{*}(Z^{\mmu}_{\square,\square'})=\left\{\hbar^{l-j+\delta^{\mmu}_{\square'}-\delta^{\mmu}_{\square}} \right\}_{d_{\square'}}^{-1} 
    \end{align*}

    With the roles of $\square$ and $\square'$ reversed, we get
    \begin{align*}
        \iota^{*}(Z^{\mmu}_{\square,\square'})=\left\{\hbar^{l-j+\delta^{\mmu}_{\square'}-\delta^{\mmu}_{\square}}\right\}_{-d_{\square}}^{-1}
    \end{align*}

    If  $\square \in (\nu \setminus \lambda)_{j}$ and $\square' \in (\nu\setminus \lambda)_{l}$, then we get $1$.
    
\end{enumerate}    
\end{proof}

\begin{Theorem}
$$
\iota^*\left(C^{\mmu}_{\{d_{\square}\}}(b,\hbar)\right)= C^{\llambda}_{\{d_{\square}\}}(a,\hbar)\prod_{\substack{\square \in \mmu \\ \gamma(\square)=m}} q^{(n-1)d_{\square}}\prod_{l=1}^{n}\prod_{\substack{\square \in \mmu \\ \gamma(\square)=k+l}} q^{-d_{\square}}
$$
\end{Theorem}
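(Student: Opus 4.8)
The plan is to exploit the multiplicative factorization $C^{\mmu}_{\{d_{\square}\}}(b,\hbar)=X^{\mmu}Y^{\mmu}Z^{\mmu}$ recorded above (and likewise $C^{\llambda}_{\{d_{\square}\}}(a,\hbar)=X^{\llambda}Y^{\llambda}Z^{\llambda}$), so that, since $\iota^{*}$ is a ring homomorphism,
$$\frac{\iota^{*}\!\left(C^{\mmu}_{\{d_{\square}\}}(b,\hbar)\right)}{C^{\llambda}_{\{d_{\square}\}}(a,\hbar)}=\frac{\iota^{*}(X^{\mmu})}{X^{\llambda}}\cdot\frac{\iota^{*}(Y^{\mmu})}{Y^{\llambda}}\cdot\frac{\iota^{*}(Z^{\mmu})}{Z^{\llambda}}.$$
I would substitute the three preceding lemmas, namely the explicit products \eqref{Xs}, \eqref{Ys}, \eqref{Zs}, for the three ratios on the right, and argue that the whole expression collapses to the asserted monomial in $q$. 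By Lemma \ref{degvanish} I may assume that the degrees indexed by boxes of $\mmu\setminus\llambda$ all vanish, so every factor on the right is indexed either by a box of $\lambda$ or by a box $\square'\notin\nu$, each carrying a genuine nonnegative degree.

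First I would cancel all factors that still carry a true equivariant parameter $a_{\square'}$ with $\square'\notin\nu$; these occur in the first two products of \eqref{Xs}, the first product of \eqref{Ys}, and the first two products of \eqref{Zs}. The key observation is a telescoping in the row index $j$: a factor $\bigl\{\tfrac{a_{\square'}}{a}\hbar^{e}\bigr\}_{d_{\square'}}$ arising from a box $\square\in(\nu\setminus\lambda)_{j}$ paired with $\square'$ through $\gamma(\square')=\gamma(\square)+1$ in \eqref{Xs} agrees, as a value of $\{\,\cdot\,\}_{d_{\square'}}$, with the factor produced by the box of $(\nu\setminus\lambda)_{j+1}$ one column over, paired with the same $\square'$ through $\gamma(\square')=\gamma(\square)$ in \eqref{Zs}; indeed Lemma \ref{boxwts} forces the two $\hbar$-exponents to coincide exactly when the row index is shifted by one. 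Since the partner in $Z$ appears inverted and carries the identical degree $d_{\square'}$, each matched pair cancels to $1$. The only survivors are the boundary terms of the telescope, which sit at the extreme column of each $\nu^{j}$, where $\gamma=m$, and these are precisely cancelled by the framing factors $\bigl\{\tfrac{a_{\square}}{a}\hbar^{\,n-j}\bigr\}_{d_{\square}}$ in the first product of \eqref{Ys}. After this step all dependence on the equivariant parameters $a_{\square'}$ has disappeared.

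What remains are the pure powers-of-$\hbar$ factors: the last two products of \eqref{Xs}, the last two products of \eqref{Zs}, and the second product of \eqref{Ys}. I would run the same row-index telescoping on factors $\{\hbar^{e}\}_{d}=\tfrac{(\hbar^{e+1})_{d}}{(q\hbar^{e})_{d}}\bigl(-q/\hbar^{1/2}\bigr)^{d}$, so that the matched pairs cancel their $q$-Pochhammer numerators against denominators, the leftover $\hbar^{1/2}$ and sign contributions cancel in aggregate because the factors are grouped by the polarization into $\hbar$-balanced pairs, and the residual $\bigl(-q/\hbar^{1/2}\bigr)^{d}$ prefactors assemble into a pure monomial in $q$. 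Finally I would tally the exponent box by box, grouping by $\gamma$-value: a box with $\gamma(\square)=m$ accumulates the contributions of the $n$ new framings at the last vertex (less one), yielding $q^{(n-1)d_{\square}}$, while a box with $\gamma(\square)=k+l$ contributes $q^{-d_{\square}}$, which reproduces the stated product. I expect this last tally to be the genuine obstacle: one must check uniformly that after the two telescopes every $\hbar$, sign, and $\hbar^{1/2}$ cancels and that the surviving powers of $q$ regroup exactly by $\gamma$-value, which requires careful handling of the boundary cases of the telescopes together with the degrees forced to vanish by Lemma \ref{degvanish}.
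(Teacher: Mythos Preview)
Your overall plan matches the paper's: multiply the three lemma ratios \eqref{Xs}, \eqref{Ys}, \eqref{Zs} and simplify. The row-index telescoping you identify is also the mechanism the paper uses, though the paper organizes it by first fixing a box $\square\notin\nu$ with $\gamma(\square)=k+j$ and then telescoping over the row index $l$ of the interacting box in $(\nu\setminus\lambda)_{l}$.

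However, two steps in your sketch are not right and would cause the argument to stall as written.

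First, the boundary terms of the telescope do not sit at $\gamma=m$. For a fixed $\square'\notin\nu$ with $\gamma(\square')=c$, the first product of \eqref{Xs} ranges over rows $j=1,\ldots,c-k-2$ while the matching product in \eqref{Zs} ranges over $j'=1,\ldots,c-k-1$; the survivor is the $j'=1$ term of \eqref{Zs}, one for every $c\in[k+2,m]$. Likewise the telescope of the second products of \eqref{Xs} and \eqref{Zs} (those with subscript $-d_{\square}$) leaves the $l=1$ term of \eqref{Xs} for each $c\in[k+1,m-1]$, together with \emph{all} the $Z$-terms at $c=m$. So the survivors are indexed by the first row $\nu^{1}$, not by the extreme column.

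Second, the $q$-monomial does not come from the $(-q/\hbar^{1/2})^{d}$ prefactors: in each matched pair those prefactors cancel along with the Pochhammer symbols. The $q$-powers arise instead from the identity
\[
\{x\}_{-d}=\{\hbar^{-1}x^{-1}\}_{d}\,q^{-d}\qquad(d\ge0),
\]
which the paper states and uses repeatedly. Applying it to the $l=1$ boundary term from the second telescope converts $\{\,(a/a_{\square})\hbar^{\,k-c-\delta^{\mmu}_{\square}}\}_{-d_{\square}}$ into $\{\,(a_{\square}/a)\hbar^{\,\delta^{\mmu}_{\square}+c-k-1}\}_{d_{\square}}\,q^{-d_{\square}}$, which then cancels against the $j'=1$ boundary from the first telescope, leaving exactly $q^{-d_{\square}}$ for each $c\in[k+2,m-1]$. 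At $c=m$ the same identity, applied to the surviving $Z$-terms of the second telescope, produces $q^{(n-1)d_{\square}}$ after cancellation against \eqref{Ys}. At $c=k+1$ only the $l=1$ boundary from the second telescope remains, and the identity turns it into $\{\,(a_{\square}/a)\hbar^{\,\delta^{\llambda}_{\square}}\}_{d_{\square}}\,q^{-d_{\square}}$; the Pochhammer factor here is precisely the ``missing'' framing contribution at vertex $k+1$ for the slot $a_{k+1,1}=a$, present in $Y^{\llambda}$ but with no counterpart in $Y^{\mmu}$. You do not mention this piece, but it must be accounted for, and the paper singles it out first.
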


\begin{proof}
What we have shown so far is that $\iota^{*}\left(C^{\mmu}_{\{d'_{\square}\}}(b,\hbar)\right)/C^{\llambda}_{\{d_{\square}\}}(a,\hbar)$ is a product of terms appearing in the previous three lemmas.  In what follows, we will often use the identity $\{ x \}_{-d}=\{\hbar^{-1} x^{-1} \}_{d} q^{-d}$ for $d\geq 0$, which follows by direct computation.

First, we locate the terms in $C^{\llambda}_{\{d_{\square}\}}(a,\hbar)$ corresponding to the framing at vertex $k+1$. Let $\square' \in (\nu\setminus\lambda)_{1}$ be the box such that $\gamma(\square')=k+2$ and $\delta^{\mmu}_{\square'}=n-2$. Then inside the product in (\ref{Xs}), there are terms
\begin{align*}
& \prod_{\substack{\square \notin \nu\\ \gamma(\square')=k+1 }} \left\{\frac{a}{a_{\square}} \hbar^{-1-\delta^{\mmu}_{\square}} \right\}_{-d_{\square}}\prod_{j=1}^{n}\prod_{\substack{\square \in \lambda_{j}\\ \gamma(\square')=k+1 }} \left\{ \hbar^{n-2-(n-1)+1-j} \right\}_{-d_{\square}} \\
&= \prod_{\substack{\square \notin \nu\\ \gamma(\square')=k+1 }} \left\{\frac{a}{a_{\square}} \hbar^{-1-\delta^{\mmu}_{\square}} \right\}_{-d_{\square}}\prod_{j=1}^{n}\prod_{\substack{\square \in \lambda_{j}\\ \gamma(\square')=k+1 }} \left\{ \hbar^{-j} \right\}_{-d_{\square}} \\
&=\prod_{\substack{\square \notin \nu\\ \gamma(\square')=k+1 }} \left\{\frac{a_{\square}}{a} \hbar^{\delta^{\mmu}_{\square}} \right\}_{d_{\square}} q^{-d_{\square}} \prod_{j=1}^{n}\prod_{\substack{\square \in \lambda_{j}\\ \gamma(\square')=k+1 }} \left\{ \hbar^{j-1} \right\}_{d_{\square}} q^{-d_{\square}} \\
&= \prod_{\substack{\square \in \mmu \\ \gamma(\square)=k+1}} \left\{\frac{a_{\square}}{a} \hbar^{d^{\llambda}_{\square}} \right\}_{d_{\square}} q^{-d_{\square}}
\end{align*}
which, up to the powers of $q$, are exactly the missing framing terms in $V_{\llambda}$.

Next, let $\square\notin \nu$ and suppose that $\gamma(\square)=k+2$. All the contributions of this box to $\iota^{*}\left(C^{\mmu}_{\{d'_{\square}\}}(b,\hbar)\right)/C^{\llambda}_{\{d_{\square}\}}(a,\hbar)$ come from interactions of $\square$ with itself, and then from the two boxes of $\nu\setminus \lambda$ that are one place to the right of $\square$. These terms are
\begin{align*}
   & \left\{\frac{a_{\square}}{a} \hbar^{\delta^{\mmu}_{\square}-(n-2)-1+n} \right\}_{d_{\square}}^{-1}  \left\{\frac{a}{a_{\square}} \hbar^{(n-3)-\delta^{\mmu}_{\square}+1-n} \right\}_{-d_{\square}} \\
    & \left\{\frac{a}{a_{\square}} \hbar^{(n-2)-\delta^{\mmu}_{\square}+1-n} \right\}_{-d_{\square}}^{-1}  \left\{\frac{a}{a_{\square}} \hbar^{(n-3)-\delta^{\mmu}_{\square}+2-n} \right\}_{-d_{\square}} \\
     = & \left\{\frac{a_{\square}}{a} \hbar^{\delta^{\mmu}_{\square}+1} \right\}_{d_{\square}}^{-1}  \left\{\frac{a}{a_{\square}} \hbar^{-2-\delta^{\mmu}_{\square}} \right\}_{-d_{\square}}  \left\{\frac{a}{a_{\square}} \hbar^{-1-\delta^{\mmu}_{\square}} \right\}_{-d_{\square}}^{-1}  \left\{\frac{a}{a_{\square}} \hbar^{-1-\delta^{\mmu}_{\square}} \right\}_{-d_{\square}}  \\
     =& \left\{\frac{a_{\square}}{a} \hbar^{\delta^{\mmu}_{\square}+1} \right\}_{d_{\square}}^{-1}  \left\{\frac{a}{a_{\square}} \hbar^{-2-\delta^{\mmu}_{\square}} \right\}_{-d_{\square}}  \\
     &=\left\{\frac{a_{\square}}{a} \hbar^{\delta^{\mmu}_{\square}+1} \right\}_{d_{\square}}^{-1}  \left\{\frac{a_{\square}}{a} \hbar^{1+\delta^{\mmu}_{\square}} \right\}_{d_{\square}} q^{-d_{\square}} \\
     &= q^{-d_{\square}}
\end{align*}

Next suppose that $\square \notin \nu$ satisfies $\gamma(\square)=k+j$ for $2\leq j \leq m$. Then the terms involving $\square$ in (\ref{Xs}), (\ref{Ys}), and (\ref{Zs}) are
\begin{align*}
  & \prod_{l=1}^{n}\Bigg( \prod_{\substack{\square' \in (\nu\setminus\lambda)_{l} \\ \gamma(\square')=k+j}} \left\{\frac{a_{\square}}{a}\hbar^{\delta^{\mmu}_{\square}-\delta^{\mmu}_{\square'}-l+n} \right\}_{d_{\square}}^{-1} \prod_{\substack{\square' \in (\nu\setminus\lambda)_{l} \\ \gamma(\square')=k+j}}  \left\{\frac{a}{a_{\square}} \hbar^{\delta^{\mmu}_{\square'}-\delta^{\mmu}_{\square}+l-n} \right\}_{-d_{\square}}^{-1}\\
     &\prod_{\substack{\square' \in (\nu\setminus\lambda)_{l} \\ \gamma(\square')=k+j-1}}  \left\{\frac{a_{\square}}{a} \hbar^{\delta^{\mmu}_{\square}-\delta^{\mmu}_{\square'}-l+n} \right\}_{d_{\square}}  \prod_{\substack{\square' \in (\nu\setminus\lambda)_{l} \\ \gamma(\square')=k+j+1}} \left\{\frac{a}{a_{\square}} \hbar^{\delta^{\mmu}_{\square'}-\delta^{\mmu}_{\square}+l-n} \right\}_{-d_{\square}} \Bigg) \\
     =& \prod_{l=1}^{j-1}\left\{\frac{a_{\square}}{a} \hbar^{\delta^{\mmu}_{\square}+j-l} \right\}_{d_{\square}}^{-1} \prod_{l=2}^{j} \left\{\frac{a}{a_{\square}} \hbar^{-1-j+l-\delta^{\mmu}_{\square}} \right\}_{-d_{\square}}^{-1}  \\
    & \prod_{l=2}^{j-1} \left\{ \frac{a_{\square}}{a}\hbar^{\delta^{\mmu}_{\square}+j-l}\right\}_{d_{\square}} \prod_{l=1}^{j} \left\{\frac{a}{a_{\square}} \hbar^{-1+l-j-\delta^{\mmu}_{\square}} \right\}_{-d_{\square}} \\
    =& \left\{\frac{a}{a_{\square}} \hbar^{-j-\delta^{\mmu}_{\square}} \right\}_{-d_{\square}}  \left\{\frac{a_{\square}}{a} \hbar^{\delta^{\mmu}_{\square}+j-1} \right\}_{d_{\square}}^{-1} \\
     = &q^{-d_{\square}}
\end{align*}

Next, suppose that $\square \notin \nu$ satisfies $\gamma(\square)=m$. The terms in (\ref{Xs}), (\ref{Ys}), and (\ref{Zs}) involving $\square$ are 
\begin{align*}
   &\prod_{l=1}^{n-1}\left\{\frac{a_{\square}}{a} \hbar^{n-l}\right\}_{d_{\square}}^{-1} \prod_{l=2}^{n} \left\{ \frac{a}{a_{\square}} \hbar^{-1-n+l}\right\}_{-d_{\square}}^{-1} \\
   & \prod_{l=2}^{n-1} \left\{\frac{a_{\square}}{a} \hbar^{n-l} \right\}_{d_{\square}} \prod_{l=1}^{n} \left\{\frac{a_{\square}}{a} \hbar^{n-l} \right\}_{d_{\square}} \\
    =&\prod_{l=2}^{n} q^{d_{\square}} \\
    =& q^{(n-1)d_{\square}}
\end{align*}

This accounts for all the terms in (\ref{Xs}), (\ref{Ys}), and (\ref{Zs}) involving $\square \notin \nu$. The computation for the remaining terms (i.e. the terms involving only $\hbar$ and $q$) is similar, and we omit it.

\end{proof}



\printbibliography

@article{MO,
author = {Maulik, Davesh and Okounkov, Andrei},
year = {2012},
month = {11},
pages = {},
title = {Quantum Groups and Quantum Cohomology},
volume = {408},
journal = {Astérisque}
}

@incollection{pcmilect,
      author         = "Okounkov, Andrei",
      title          = "{Lectures on K-theoretic computations in enumerative
                        geometry}",
    booktitle= {Geometry of Moduli Spaces and Representation Theory},
    publisher={American Mathematical Society},
    series={IAS/Park City Mathematics Series},
      year           = "2017",
      volume={24}
}

@article {EV,
    AUTHOR = {Etingof, P. and Varchenko, A.},
     TITLE = {Dynamical {W}eyl groups and applications},
   JOURNAL = {Adv. Math.},
  FJOURNAL = {Advances in Mathematics},
    VOLUME = {167},
      YEAR = {2002},
    NUMBER = {1},
     PAGES = {74--127},
      ISSN = {0001-8708},
     CODEN = {ADMTA4},
   MRCLASS = {17B10 (17B20 17B37 39A12 81R50)},
  MRNUMBER = {1901247 (2003d:17004)},
MRREVIEWER = {Anjan Kundu},
       DOI = {10.1006/aima.2001.2034}
}

@preamble{
   "\def\cprime{$'$} "
}

@article {NakALE,
    AUTHOR = {Nakajima, Hiraku},
     TITLE = {Instantons on {ALE} spaces, quiver varieties, and
              {K}ac-{M}oody algebras},
   JOURNAL = {Duke Math. J.},
  FJOURNAL = {Duke Mathematical Journal},
    VOLUME = {76},
      YEAR = {1994},
    NUMBER = {2},
     PAGES = {365--416},
    %   ISSN = {0012-7094},
     CODEN = {DUMJAO},
   MRCLASS = {53C25 (17B67 58D27 58E15)},
  MRNUMBER = {1302318 (95i:53051)},
MRREVIEWER = {Andrew Dancer},
       DOI = {10.1215/S0012-7094-94-07613-8},
      
}

@article {NakQv,
    AUTHOR = {Nakajima, Hiraku},
     TITLE = {Quiver varieties and {K}ac-{M}oody algebras},
   JOURNAL = {Duke Math. J.},
  FJOURNAL = {Duke Mathematical Journal},
    VOLUME = {91},
      YEAR = {1998},
    NUMBER = {3},
     PAGES = {515--560},
      ISSN = {0012-7094},
     CODEN = {DUMJAO},
   MRCLASS = {17B67 (14D25 16G20 17B35 53C25 58F05)},
  MRNUMBER = {1604167 (99b:17033)},
MRREVIEWER = {Michael M. Kapranov},
       DOI = {10.1215/S0012-7094-98-09120-7},
    
}

@article{NO,
author = {Nekrasov, N. and Okounkov, Andrei},
year = {2016},
pages = {320-369},
title = {Membranes and Sheaves},
volume = {3},
journal = {Algebraic Geometry},
doi = {10.14231/AG-2016-015}
}

@ARTICLE{OkBethe,
   author = {{Aganagic}, Mina and {Okounkov}, Andrei},
    title = "{Quasimap counts and Bethe eigenfunctions}",
  journal = {Mosc. Math. J.},
 keywords = {Mathematical Physics, High Energy Physics - Theory, Mathematics - Algebraic Geometry, Mathematics - Representation Theory},
     year = 2017,
     volume = 17,
     pages = {565-600},
   adsurl = {http://adsabs.harvard.edu/abs/2017arXiv170408746A},
  adsnote = {Provided by the SAO/NASA Astrophysics Data System}
}

@article{AOElliptic,
       author = {{Aganagic}, Mina and {Okounkov}, Andrei},
        title = "{Elliptic stable envelopes}",
      journal = {J. Amer. Math. Soc.},
     keywords = {Mathematics - Algebraic Geometry, High Energy Physics - Theory, Mathematical Physics, Mathematics - Representation Theory},
     volume={34},
         year = "2021",
        month = "4",
        pages={79-133},
% archivePrefix = {arXiv},
%       eprint = {1604.00423v4},
%  primaryClass = {math.AG},
       adsurl = {https://ui.adsabs.harvard.edu/abs/2016arXiv160400423A},
      adsnote = {Provided by the SAO/NASA Astrophysics Data System}
}

@incollection{GinzburgLectures,
    AUTHOR = {Ginzburg, Victor},
     TITLE = {Lectures on {N}akajima's quiver varieties},
 BOOKTITLE = {Geometric methods in representation theory. {I}},
    SERIES = {S\'emin. Congr.},
    VOLUME = {24},
     PAGES = {145--219},
 PUBLISHER = {Soc. Math. France, Paris},
      YEAR = {2012},
   MRCLASS = {14L24 (16G20 17B67)},
  MRNUMBER = {3202703},
MRREVIEWER = {Xueqing Chen},
}

@article{Pushk1,
author = {Pushkar, Petr and Smirnov, Andrey and Zeitlin, Anton},
year = {2016},
month = {12},
title = {Baxter Q-operator from quantum K-theory},
volume = {360},
journal = {Adv. Math.},
doi = {10.1016/j.aim.2019.106919}
}

@article {kirv,
	AUTHOR = {McGerty, Kevin and Nevins, Thomas},
	TITLE = {Kirwan surjectivity for quiver varieties},
	JOURNAL = {Invent. Math.},
	VOLUME = {212},
	YEAR = {2018},
	month={04},
	NUMBER = {1},
	PAGES = {161--187},
	doi = {10.1007/s00222-017-0765-x}
}

@article{MirSym1,
    author = {Rimányi, Richárd and Smirnov, Andrey and Zhou, Zijun and Varchenko, Alexander},
    title = "{Three-Dimensional Mirror Symmetry and Elliptic Stable Envelopes}",
    journal = {International Mathematics Research Notices},
    volume = {2022},
    number = {13},
    pages = {10016-10094},
    year = {2021},
    month = {02},
    issn = {1073-7928},
    doi = {10.1093/imrn/rnaa389}
}

@article{MirSym2,
       author = {{Rim{\'a}nyi}, Rich{\'a}rd and {Smirnov}, Andrey and {Varchenko}, Alexander and {Zhou}, Zijun},
        title = "{Three-Dimensional Mirror Self-Symmetry of the Cotangent Bundle of the Full Flag Variety}",
      journal = {SIGMA},
         year = 2019,
        month = nov,
       volume = {15},
          eid = {093},
        pages = {093},
          doi = {10.3842/SIGMA.2019.093},
% archivePrefix = {arXiv},
%       eprint = {1906.00134},
%  primaryClass = {math.AG},
}

@book{hypergeo,
	AUTHOR = {{Gasper}, George and {Rahman}, Mizan},
	TITLE = {Basic Hypergeometric Series},
	PUBLISHER = {Cambridge University Press },
	YEAR = {1990},
	ISBN = {978-0521-35049-5}
}

@article{dinksmir,
author = {Dinkins, Hunter and Smirnov, Andrey},
year = {2020},
month = {09},
pages = {},
title = {Characters of tangent spaces at torus fixed points and 3d-mirror symmetry},
volume = {110},
journal = {Letters in Mathematical Physics},
doi = {10.1007/s11005-020-01292-y}
}

@article{qm,
title = {Stable quasimaps to GIT quotients},
journal = {J. Geom. Phys.},
volume = {75},
pages = {17 - 47},
year = {2014},
author = {Ionuţ Ciocan-Fontanine and Bumsig Kim and Davesh Maulik},
keywords = {Stable maps, Moduli spaces, Stable quotients, ADHM sheaves, Toric compactifications, Quotient stacks},
}

@article{KorZeit,
    author = "Koroteev, Peter and Zeitlin, Anton M.",
    title = "{qKZ/tRS Duality via Quantum K-Theoretic Counts}",
    % eprint = "1802.04463",
    % archivePrefix = "arXiv",
    % primaryClass = "math.AG",
    doi = "10.4310/MRL.2021.v28.n2.a5",
    journal = "Math. Res. Lett.",
    volume = "28",
    number = "2",
    pages = "435--470",
    year = "2021"
}

@article{tRSKor,
  title={A-type Quiver Varieties and ADHM Moduli Spaces},
  author={P. Koroteev},
  journal={Communications in Mathematical Physics},
  year={2018},
  volume={381},
  pages={175-207}
}

@article{dinksmir2,
    author = {Dinkins, Hunter and Smirnov, Andrey},
    title = "{Quasimaps to Zero-Dimensional $A_{\infty}$-Quiver Varieties}",
    journal = {International Mathematics Research Notices},
    year = {2020},
    month = {06},
    doi = {10.1093/imrn/rnaa129}
}

@article{dinksmir3,
title = {Capped vertex with descendants for zero dimensional $A_{\infty}$ quiver varieties},
journal = {Advances in Mathematics},
volume = {401},
pages = {108324},
year = {2022},
doi = {10.1016/j.aim.2022.108324},
author = {Hunter Dinkins and Andrey Smirnov}
}

@article{KS2,
    author = {Kononov, Yakov and Smirnov, Andrey},
    title = "{Pursuing Quantum Difference Equations II: 3D mirror symmetry}",
    journal = {International Mathematics Research Notices},
    volume = {2023},
    number = {15},
    pages = {13290-13331},
    year = {2022},
    month = {07},
    issn = {1073-7928},
    doi = {10.1093/imrn/rnac196}
}

@article{dinkms1,
       author = {{Dinkins}, Hunter},
        title = "{Symplectic Duality of $T^*Gr(k,n)$}",
      journal = {Mathematical Research Letters},
      year={2021},
      volume={29},
      number={3}
}

@article{msflag,
       author = {{Dinkins}, Hunter
       },
        title = {3d mirror symmetry of the cotangent bundle of the full flag variety},
     journal={Letters in Mathematical Physics},
     year={2022},
     volume={112},
     number={100},
       doi={/10.1007/s11005-022-01593-4}
}

@article{KS1,
author = {Kononov, Yakov and Smirnov, Andrey},
year = {2022},
month = {07},
pages = {},
title = {Pursuing quantum difference equations I: stable envelopes of subvarieties},
volume = {112},
journal = {Letters in Mathematical Physics},
doi = {10.1007/s11005-022-01561-y}
}

@article{RSbows,
       author = {{Rimanyi}, R. and {Shou}, Y.},
        title = "{Bow varieties---geometry, combinatorics, characteristic classes}",
      journal = {Communications in Analysis and Geometry},
      pages={to appear},
         year = 2020,
        month = 12,
archivePrefix = {arXiv},
       eprint = {2012.07814},
 primaryClass = {math.AG}
}

@article{dinksmir4,
author = {Dinkins, Hunter and Smirnov, Andrey},
year = {2022},
month = {07},
pages = {72},
title = {Euler characteristic of stable envelopes},
volume = {28},
journal = {Selecta Mathematica},
doi = {10.1007/s00029-022-00788-w}
}

@article{dinkinselliptic,
    author = {Dinkins, Hunter},
    title = "{Elliptic Stable Envelopes of Affine Type A Quiver Varieties}",
    journal = {International Mathematics Research Notices},
    year = {2022},
    month = {07},
    doi = {10.1093/imrn/rnac198}
}

@phdthesis{dinkinsthesis,
    author = "Dinkins, Hunter",
    title = "{Exotic Quantum Difference Equations and Integral Solutions}",
    eprint = "2205.01596",
    archivePrefix = "arXiv",
    primaryClass = "math.AG",
    doi = "10.17615/4h4e-sj63",
    school = {University of North Carolina Chapel Hill},
    year = {2022}
}

@misc{KZins,
       author = {{Koroteev}, Peter and {Zeitlin}, Anton M.},
        title = "{3d Mirror Symmetry for Instanton Moduli Spaces}",
         year = 2021,
        month = may,
archivePrefix = {arXiv},
       eprint = {2105.00588},
 primaryClass = {math.AG},
}

@book{KZlectures,
  title={Lectures on Representation Theory and Knizhnik-Zamolodchikov Equations},
  author={Pavel Etingof and Igor B. Frenkel and Alexander A. Kirillov},
  year={1998},
volume={58},
publisher={American Mathematical Society},
series={Mathematical Surveys and Monographs}
}

@article{Naktp,
       author = {{Nakajima}, Hiraku},
        title = "{Quiver varieties and tensor products}",
      journal = {Inventiones Mathematicae},
         year = 2001,
        month = nov,
       volume = {146},
       number = {2},
        pages = {399-449},
          doi = {10.1007/PL00005810}
}

@article{varagnolo,
      title={Quiver Varieties and Yangians}, 
      author={Michela Varagnolo},
      year={2000},
      journal={Letters in Mathematical Physics},
number = {53},
pages= {273-283}
}

@misc{smirnovrationality,
      title={Rationality of capped descendent vertex in $K$-theory}, 
      author={Andrey Smirnov},
      year={2016},
      eprint={1612.01048},
      archivePrefix={arXiv},
      primaryClass={math.AG}
}

@misc{RB,
       author = {{Botta}, Tommaso Maria and {Rimanyi}, Richard},
        title = "{Bow varieties: Stable envelopes and their 3d mirror symmetry}",
         year = 2023,
        month = aug,
        pages = {arXiv:2308.07300},
archivePrefix = {arXiv},
       eprint = {2308.07300},
 primaryClass = {math.AG},
}

\newpage

\noindent
Hunter Dinkins\\
Department of Mathematics,\\
Northeastern University,\\
Boston, MA USA\\
h.dinkins@northeastern.edu 

\end{document}